\theoremstyle{plain}
\newtheorem{proposition}{Proposition}[section]
\newtheorem{theorem}[proposition]{Theorem}
\crefname{theorem}{Theorem}{Theorems}
\Crefname{theorem}{Theorem}{Theorems}
\newtheorem*{theorem*}{Theorem}
\newtheorem{lemma}[proposition]{Lemma}
\newtheorem{corollary}[proposition]{Corollary}
\newtheorem{introthm}{Theorem}
\newtheorem{claim}[proposition]{Claim}
\theoremstyle{definition}
\newtheorem{example}[proposition]{Example}
\newtheorem{definition}[proposition]{Definition}
\theoremstyle{remark}
\newtheorem{remark}[proposition]{Remark}
\newtheorem{question}[proposition]{Question}
\numberwithin{equation}{section}
\DeclareMathOperator{\Mod}{\operatorname{Mod}}
\DeclareMathOperator{\Homeo}{\operatorname{Homeo}}
\DeclareMathOperator{\diam}{diam}
\DeclareMathOperator{\stab}{stab}
\def\co{\colon\thinspace}
\DeclareMathOperator{\Cc}{\mathcal{C}}
\DeclareMathOperator{\Gc}{\mathcal{G}}
\DeclareMathOperator{\Hc}{\mathcal{H}}
\newcommand{\Z}{\mathbb{Z}}
\newcommand{\bgit}{{\sf M}}
\newcommand{\bsi}{{\sf B}}
\newcommand{\abs}[1]{\left\vert#1\right\vert}
\newcommand{\vertiii}[1]{{\left\vert\kern-0.25ex\left\vert\kern-0.25ex\left\vert #1
		\right\vert\kern-0.25ex\right\vert\kern-0.25ex\right\vert}}
\newcommand{\<}{\langle}
\renewcommand{\>}{\rangle}
\newcommand{\arr}{\rightarrow}
\newcommand{\del}{\partial}
\newcommand{\inv}{^{-1}}
\newcommand{\define}[1]{\emph{#1}}
\newcommand{\cc}{\mathcal{C}} 
\newcounter{notes}
\newcommand{\hbpar}[1] {\refstepcounter{notes}\todo[inline,color=cyan!30,linecolor=cyan!80!black,size=\normalsize    ]{(\arabic{notes}-hb) #1}}
\newcommand{\hhpar}[1] {\refstepcounter{notes}\todo[inline,color=yellow!30,linecolor=orange!80!black,size=\normalsize]{(\arabic{notes}-hh) #1}}
\definecolor{shadecolor}{RGB}{226, 223, 235}
\definecolor{lav}{RGB}{198, 173, 240}
\begin{document}

\title[Constructing reducibly geometrically finite subgroups]{Constructing reducibly geometrically finite subgroups of the mapping class group}

	\author[Aougab]{Tarik Aougab}
	\address{Department of Mathematics and Statistics, Haverford College}
	\email{taougab@haverford.edu}
	\urladdr{sites.google.com/view/tarikaougab/}

	\author[Bray]{Harrison Bray}
	\address{Department of Mathematical Sciences, George Mason University}
	\email{hbray@gmu.edu}
	\urladdr{www.harrisonbray.com}

	\author[Dowdall]{Spencer Dowdall}
	\address{Department of Mathematics, Vanderbilt University}
	\email{spencer.dowdall@vanderbilt.edu}
	\urladdr{https://math.vanderbilt.edu/dowdalsd/}

	\author[Hoganson]{Hannah Hoganson}
	\address{Department of Mathematics, University of Maryland}
	\email{hoganson@umd.edu}
	\urladdr{https://www.math.umd.edu/~hoganson/}

	\author[Maloni]{Sara Maloni}
	\address{Department of Mathematics, University of Virginia}
	\email{sm4cw@virginia.edu}
	\urladdr{sites.google.com/view/sara-maloni/}

	\author[Whitfield]{Brandis Whitfield}
	\address{Department of Mathematics, Temple University}
	\email{brandis@temple.edu}
	\urladdr{sites.google.com/view/algebrandis/}

	\date{\today}

	\begin{abstract}
		In this article, we consider qualified notions of geometric finiteness in mapping class groups called \emph{parabolically geometrically finite} (PGF) and \emph{reducibly geometrically finite} (RGF). We examine several constructions of subgroups and determine when they produce a PGF or RGF subgroup. These results provide a variety of new examples of PGF and RGF subgroups. Firstly, we consider the right-angled Artin subgroups constructed by Koberda \cite{Koberda-RAAG_subgroups_of_MCGs} and Clay--Leininger--Mangahas \cite{CLM-RAAGs_in_MCG}, which are generated by high powers of given elements of the mapping class group. We give conditions on the supports of these elements that imply the resulting right-angled Artin subgroup is RGF. Secondly, we prove combination theorems which provide conditions for when a collection of reducible subgroups, or sufficiently deep finite-index subgroups thereof, generate an RGF subgroup.
	\end{abstract}

	\maketitle

\setcounter{tocdepth}{1}
	\tableofcontents

	\section{Introduction}\label{intro}

Motivated by a long standing analogy with the classical theory of Kleinian groups, Farb and Mosher \cite{FarbMosher-ConvexCocompact} introduced the notion of a \define{convex cocompact} subgroup of the mapping class group $\Mod(S)$ of a surface $S$. These subgroups have received much attention since their introduction in 2002, and there is now a well-developed theory that connects them to hyperbolicity of surface group extensions, to the geometry of Teichm\"uller space, and to the geometry and distance formula of the mapping class group 
\cite{BestvinaBrombergKentLeininger-UndistortedPurelyPA,DurhamTaylor-StabilityInMCGs,KentLeininger-Shadows,Hamenstadt05}. In particular, there are many equivalent formulations of the definition, the most relevant for us being that a subgroup of $\Mod(S)$ is convex cocompact if and only if it is finitely generated and the orbit map to the curve graph $\cc(S)$ is a quasi-isometric embedding. 

Despite its success, the theory of convex cocompactness is inhibited by a relative scarcity of examples. There are several constructions of convex cocompact subgroups of the mapping class group, but to date all known examples are virtually free. Two major open questions in the field are whether there exists a convex cocompact surface subgroup of $\Mod(S)$, and whether there exists a purely pseudo-Anosov subgroup that fails to be convex cocompact. Kent and Leininger \cite{KL24} have recently shown the existence of (infinitely many commensurability classes of) purely pseudo-Anosov surface subgroups of $\Mod(S)$, when $S$ is a closed orientable surface of genus $g\geq 4$, but it unknown whether these are convex cocompact.

In the setting of Kleinian groups, convex cocompactness is a restrictive case of the more general phenomenon of \textit{geometric finiteness}. Mosher \cite{MR2264544} suggested in 2006 that there should be an analogous theory of geometrically finite subgroups of mapping class groups, a hope that is finally coming into view now. While there are arguably many potential formulations of what ``geometrically finite'' should mean in this setting, recent work of Dowdall--Durham--Leininger--Sisto \cite{DowdallDurhamLeiningerSisto-ExtensionsII}, Loa \cite{Loa}, and Udall \cite{Udall-combinations_of_PGF} has focused attention on a qualified notion called \emph{reducibly geometrically finite} (RGF), which roughly means $G$ is hyperbolic relative to a collection $\mathcal{H} = \{H_1,\dots,H_n\}$ of reducible subgroups $H_i \le G$ for which the coned off Cayley graph equivariantly and quasi-isometrically embeds into the curve graph $\Cc(S)$; see \Cref{def:RGF}. Recall that a subgroup $H\le \Mod(S)$ is \emph{reducible} if there is a multicurve $\alpha$ on $S$ that is preserved by every element of $H$. 

The goal of this paper is to provide many new examples of RGF subgroups of mapping class groups and to clarify when certain constructions yield RGF subgroups. These examples provide a wealth of different features and can serve as testing ground for the continued development of the theory of geometric finiteness in mapping class groups. Throughout, let $S$ be a connected oriented surface without boundary and with finite complexity $\xi(S)$ (see \Cref{ssub:Surfaces_complexes}) at least 1.

\subsection*{Right-Angled Artin Subgroups}
One important source of interesting subgroups in mapping class groups are the right-angled Artin groups constructed by Koberda \cite{Koberda-RAAG_subgroups_of_MCGs} and Clay--Leininger--Mangahas \cite{CLM-RAAGs_in_MCG}; see also Crisp--Wiest \cite{CrispWeist-RAAGs_in_pure_braid_groups}.
Recall that to each finite simplicial graph $\Gamma$, there is an associated \emph{right-angled Artin group (RAAG)} $A(\Gamma)$ defined by the presentation
\[A(\Gamma)=\left\langle x_1, \dots, x_n \mid [x_i,x_j]=1 \text{ if } (x_i,x_j)\text{ is an edge in }\Gamma \right\rangle \]
with generators $x_1,\dots,x_n$ corresponding to the vertices of $\Gamma$. Given a full subgraph $\Gamma'\subset \Gamma$, we also use $A(\Gamma')\le A(\Gamma)$ to denote the subgroup generated by vertices $x_i\in \Gamma'$.
Right-angled Artin groups interpolate between free groups (when there are no edges) and free abelian groups (when $\Gamma$ is a complete graph). Due to their simple yet flexible formulation, such groups exhibit a rich variety of behaviors and play an essential role throughout geometric group theory,  including in Agol's celebrated resolution of the virtual Haken conjecture \cite{Agol-Virtual_Haken};  see e.g.~\cite{charney,Wise-Riches_to_RAAGs} and the references therein.

Our first theorem addresses the question of determining when these right-angled Artin subgroups are reducibly geometrically finite. For the statement,  consider a list $S_1,\dots, S_n$ of isotopy classes of essential subsurfaces of a surface $S$ and let $\Gamma = \Gamma(S_1,\ldots,S_n)$ be the {\em realization graph} with vertex set $\{S_1,\ldots,S_n\}$ and edges representing disjointness. We say the family is \define{admissible} if for $i\ne j$ the surfaces $S_i\ne S_j$ are non-nested and if $S_j$ is not the annulus about a boundary component of $S_i$. Now take mapping classes $f_1, \dots, f_n$ that are \emph{fully supported} on these subsurfaces, meaning each $f_i$ is either a partial pseudo-Anosov supported on $S_i$ or a Dehn twist power about the core of $S_i$ in the case that $S_i$ is an annulus.

In this setting, Koberda \cite{Koberda-RAAG_subgroups_of_MCGs} showed that for all large $r $ the map $x_i\to f_i^r$ gives an isomorphism between the RAAG $A(\Gamma)$ and the subgroup $\langle f_1^r, \dots, f_n^r\rangle$ of $\Mod(S)$ generated by powers of these elements. (Koberda's result holds more generally 
whenever the collection $f_1,\dots, f_n$ is ``irredundant'', a condition that is implied by our admissibility assumption on the supports $S_1,\dots, S_n$.) This gives a complete \emph{algebraic} description of the subgroup generated by the powers $f_i^r$. 
Under the additional assumption that each $f_i$ is a partial pseudo-Anosov (that is, no $S_i$ is an annulus), Clay, Leininger, and Mangahas \cite{CLM-RAAGs_in_MCG} independently proved that $\langle f_1^r, \dots, f_n^r\rangle$ is isomorphic to $A(\Gamma)$ and moreover that it equivariantly quasi-isometrically embeds into the mapping class group (i.e., it is an \emph{undistorted} subgroup) and the Teichm\"uller space, thereby giving strong \emph{geometric} information about the subgroup. Later Runnels \cite{Runnels_EffectiveGenerationRAAGsinMCG} gave an effective upper bound on the size of the exponent $r$ needed for these results to hold and extended the result about undistortion in $\Mod(S)$ to also allow the $f_i$ to be Dehn twist powers, thereby confirming a speculation made by the authors of \cite{CLM-RAAGs_in_MCG}.

Our first theorem explains precisely when the above construction produces subgroups with the additional property of being reducibly geometrically finite:

\begin{introthm}\label{main-CLM}
Let $f_{1},\ldots, f_{n}$ be mapping classes fully supported, respectively, on an admissible family $\{S_{1},\ldots , S_{n}\}$ of subsurfaces with realization graph $\Gamma$. 
Suppose 
\begin{enumerate}
\item\label{main-thm-cond-subgraph} $\Gamma$ decomposes as the disjoint union  $\Gamma_1
\sqcup \cdots \sqcup \Gamma_m$ of subgraphs, 
with $m \geq 2$; 

\item\label{main-thm-cond-reducible} the subgroup $G_k$ 
of $\Mod(S)$ generated by the elements $f_i$ supported on the vertices of $\Gamma_k$
is reducible for each $k=1,\ldots,m$; and 
\item\label{main-thm-cond-bigdist} 
$d_S(\partial S_\ell, \partial S_j)\geq 3$ for all $S_\ell$ and $S_j$ belonging to 
distinct subgraphs $\Gamma_k$
of $\Gamma$. 
\end{enumerate}
Define a map $\Psi\colon A(\Gamma)\to \Mod(S)$ by $\Psi(x_i) = f_i^{p_i}$ for some exponents $p_i\in \Z$.
Then there exists $N>0$ such that whenever $\abs{p_i}\ge N$ for each $i$, the subgroup 
$\left\langle f_1^{p_1}, \ldots, f_n^{p_n}\right\rangle$
is isomorphic to 
$\Psi(A(\Gamma_{1})) \ast \cdots \ast \Psi(A(\Gamma_{m})) $, and 
is a 	reducibly geometrically finite group with 
respect to the factors $\{\Psi(A(\Gamma_{1})),\ldots, \Psi(A(\Gamma_{m}))\}$.
\end{introthm}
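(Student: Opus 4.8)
The plan is to separate the algebraic and the geometric content. For the algebraic assertion, I would first note that because $\Gamma = \Gamma_1 \sqcup \cdots \sqcup \Gamma_m$ is a disjoint union with no edges across distinct subgraphs, the defining relations of the RAAG split, giving the standard decomposition $A(\Gamma) \cong A(\Gamma_1) \ast \cdots \ast A(\Gamma_m)$. Koberda's theorem \cite{Koberda-RAAG_subgroups_of_MCGs} (applicable since admissibility implies irredundancy) then supplies an $N_0$ so that for $\abs{p_i}\ge N_0$ the map $x_i \mapsto f_i^{p_i}$ is an isomorphism $A(\Gamma) \xrightarrow{\sim} \langle f_1^{p_1},\ldots,f_n^{p_n}\rangle$ carrying $A(\Gamma_k)$ onto $\Psi(A(\Gamma_k))$; combined with the previous sentence this yields the claimed free-product decomposition. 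Writing $H_k = \Psi(A(\Gamma_k))$, each factor $H_k \le G_k$ is reducible because it preserves any multicurve fixed by $G_k$, using hypothesis \eqref{main-thm-cond-reducible}. This settles the isomorphism statement and the requirement in \Cref{def:RGF} that the factors be reducible.

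It then remains to verify the two conditions of \Cref{def:RGF}. The first, that $G = \langle f_i^{p_i}\rangle$ be hyperbolic relative to $\{H_k\}$, is automatic: a free product is always hyperbolic relative to its free factors, and the coned-off Cayley graph $\widehat{C}(G)$ obtained by coning the cosets of the $H_k$ is quasi-isometric to the Bass--Serre tree $T$ of the free product. Thus the entire problem reduces to showing that the orbit map to $\Cc(S)$ descends to an equivariant quasi-isometric embedding of $T$ into $\Cc(S)$. For a basepoint I would fix, for each $k$, a multicurve $\sigma_k$ preserved by $G_k$ and send the cone vertex of a coset $gH_k$ to $g\sigma_k$; since every element of $H_k$ fixes $\sigma_k$, each factor moves a fixed curve a $\Cc(S)$-distance of at most twice its distance to $\sigma_k$, which is precisely why coning the factors off is harmless and why the upper (coarse Lipschitz) bound for the embedding is routine.

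The heart of the argument, and the step I expect to be the main obstacle, is the matching lower bound: for a reduced word $g = h_1\cdots h_\ell$ with syllables $h_j$ alternating among the factors, one must show $d_S(\sigma_{k_0}, g\,\sigma_{k_\ell}) \gtrsim \ell$. I would run this through subsurface projections. Since each $f_i$ is fully supported on $S_i$, for $\abs{p_i}$ large the projection distance $d_{S_i}(\beta, f_i^{p_i}\beta)$ in $\Cc(S_i)$ (an annular projection when $S_i$ is an annulus) is as large as we like, so every nontrivial syllable produces a large projection to a support of its factor. Hypothesis \eqref{main-thm-cond-bigdist}, that $d_S(\partial S_\ell, \partial S_j)\ge 3$ across distinct subgraphs, is exactly the condition ensuring that the relevant subsurface projections between the supports of consecutive syllables are defined and that Behrstock's inequality governs their interaction, whereas the within-factor supports need no such separation because reducibility keeps each factor's $\Cc(S)$-motion bounded. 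Feeding the resulting ordered sequence of domains with large projections into the bounded geodesic image theorem of Masur--Minsky then forces any $\Cc(S)$-geodesic between the endpoints to pass near each $\partial S_i$ in turn, producing the linear lower bound and hence the quasi-isometric embedding.

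The delicate points to be managed are: (i) certifying that the list of supports encountered along a normal form is genuinely in Behrstock position, so that the large projections neither cancel nor backtrack --- this is where \eqref{main-thm-cond-bigdist} and the threshold $N$ must be balanced against the projection constants; (ii) treating the annular Dehn-twist supports on equal footing with the partial-pseudo-Anosov supports, since the former contribute only through annular projections; and (iii) decoupling the bounded internal geometry of a long syllable inside a single factor from the large transition between consecutive factors. Organizing (i)--(iii) through a projection-complex or guessing-geodesics formalism, rather than by hand, is likely the cleanest route, and uniformizing the constants so that a single $N \ge N_0$ works simultaneously for the algebraic and geometric steps completes the proof. I would expect that this geometric core can be packaged as a stand-alone combination theorem for free products of reducible subgroups with distance-$3$ separated supports, with \Cref{main-CLM} then an immediate application.
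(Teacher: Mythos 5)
Your overall architecture coincides with the paper's: realize the free product via its Bass--Serre tree (equivalently, the coned-off Cayley graph, cf.\ \Cref{rem:bowditch_tree_qi_coned-off_Cayley}), define an equivariant map sending the cone vertex of $gH_k$ to a $g$--translate of an invariant multicurve, observe the Lipschitz upper bound is routine, and extract a linear lower bound from large subsurface projections along normal forms using Behrstock's inequality and the bounded geodesic image theorem --- this is exactly \Cref{lem:qi-to-CS-implies_RGF} together with the projection-persistence machinery of \Cref{cor:projections_persist,cor:general_projections_persist}. One genuine difference is that you import Koberda's theorem at the outset to certify injectivity of $\Psi$; the paper never needs this for \Cref{main-CLM}, because the isomorphism onto the free product is a \emph{conclusion} of \Cref{lem:qi-to-CS-implies_RGF}, falling out of the same geometric lower bound (Koberda is only invoked for the stronger \Cref{cor:main-CLM}). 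Your route is legitimate --- Koberda's theorem does permit independent exponents, and admissibility gives irredundancy --- but it costs an external ingredient where the paper's argument is self-contained, and it loses the byproduct that non-peripheral elements of the image are pseudo-Anosov.

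The genuine gap is the step you defer under your points (i) and (iii), and the one justification you offer there does not hold up. Concretely: after concatenating the normal forms of the syllables into a word $g_1\dotsb g_\ell$ with $g_t = f_{\nu(t)}^{q_t}$, the translated supports $Y_t$ need \emph{not} pairwise overlap --- commuting letters inside a syllable have disjoint supports --- so the sequence of domains is not in ``Behrstock position'' and cannot be fed to \Cref{thm:BddGeodesicImage} directly; worse, an interior letter of a long syllable could a priori move the projections onto the transition domains and cancel the large projections your argument relies on. Your stated reason that ``the within-factor supports need no such separation because reducibility keeps each factor's $\Cc(S)$--motion bounded'' is the correct explanation for why coning the factors is harmless (the upper bound), but it gives no control whatsoever on subsurface projections, which is what the lower bound requires. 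The paper closes exactly this hole with the $\iota(j),\tau(j)$ bookkeeping of \Cref{cor:general_projections_persist} and with \Cref{claim:technical_bit_of_ThmA}, and it is there --- not in Koberda irredundancy --- that admissibility does its real work: non-nesting forces letters whose supports fail to overlap to have genuinely disjoint (not nested or equal) supports, hence to commute and lie in a common syllable by the normal-form ordering, and the no-annulus-about-a-boundary-component condition is precisely what guarantees such letters \emph{preserve} projections to $Y_j$; without it, a letter fully supported on $Y_t$ would twist the annular projection to a boundary annulus $Y_j$ of $Y_t$ and the persistence chain would break. The transitions between syllables also require a separate argument (\Cref{claim:subsequence_big_distance}, chaining successors $\tau$ until the first letter of the next syllable is reached) before hypothesis (\ref{main-thm-cond-bigdist}) can be applied. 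You do correctly anticipate that $N$ must be balanced against the projection constants --- the paper takes $N = (\bsi + 6\bgit + D)/c$ with $D = \max_{i,j,k} d_{S_j}(S_i,S_k)$ via \Cref{thm:minimal_translation_mangahas_cor} --- but the step you propose to outsource to a projection-complex formalism is in fact the entire content of the paper's proof, and your sketch as written would fail on any word containing a syllable with commuting letters.
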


Note that the subgraphs $\Gamma_k$ in \Cref{main-CLM} are not required to be connected and that the reducibility condition in (\ref{main-thm-cond-reducible}) is equivalent to the existence of a simple closed curve that is disjoint from all the subsurfaces $S_\ell$ lying in the subgraph $\Gamma_k$.

By combining with the above mentioned results of Koberda \cite[Theorem 1.1]{Koberda-RAAG_subgroups_of_MCGs}, Clay--Leininger--Mangahas \cite[Theorem 5.2]{CLM-RAAGs_in_MCG}, and Runnels \cite[Theorem 2]{Runnels_EffectiveGenerationRAAGsinMCG}, which applies to the mapping classes $f_1,\dots, f_n$ above, we can strengthen \Cref{main-CLM} to additionally conclude $\langle f_1^{p_1},\dots, f_n^{p_n}\rangle$ is an undistorted right-angled Artin subgroup:

\begin{corollary}
\label{cor:main-CLM}
Under the hypotheses of \Cref{main-CLM}, the number $N$ can be chosen so that 
$\Psi\colon A(\Gamma)\to \Mod(S)$ is an injective q.i.-embedding. In particular, the image $\langle f_1^{p_1},\dots, f_n^{p_n}\rangle$ 
is undistorted in $\Mod(S)$, isomorphic to $A(\Gamma)$, and RGF relative the to the family of RAAG subgroups $\Psi(A(\Gamma_k))\cong A(\Gamma_k)$ for $k = 1,\dots, m$.
\end{corollary}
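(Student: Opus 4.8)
The plan is to deduce the corollary by combining the threshold already produced in \Cref{main-CLM} with the thresholds supplied by the quoted algebraic and geometric results of Koberda, Clay--Leininger--Mangahas, and Runnels, and then passing to the maximum. The only content beyond \Cref{main-CLM} is the injectivity and undistortion of $\Psi$ on \emph{all} of $A(\Gamma)$; the rest is reconciling this with the free-product conclusion already in hand, so the whole argument is really a bookkeeping of thresholds rather than a new estimate.

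First I would record the purely combinatorial observation that, since $\Gamma$ is the disjoint union $\Gamma_1\sqcup\cdots\sqcup\Gamma_m$ with no edges between distinct $\Gamma_k$ by hypothesis~(\ref{main-thm-cond-subgraph}), the right-angled Artin group splits as a free product of the sub-RAAGs on the components:
\[
A(\Gamma)\;\cong\;A(\Gamma_1)\ast\cdots\ast A(\Gamma_m).
\]
This is the standard fact that the RAAG of a disjoint union of defining graphs is the free product of the factor RAAGs. Consequently the free-product decomposition $\Psi(A(\Gamma_1))\ast\cdots\ast\Psi(A(\Gamma_m))$ produced by \Cref{main-CLM} is exactly the image $\Psi(A(\Gamma))$, so the abstract isomorphism type asserted in the corollary is automatically consistent with \Cref{main-CLM} as soon as each restriction $\Psi|_{A(\Gamma_k)}$ is known to be injective.

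Next I would invoke the effective generation results, all of which apply to the mapping classes $f_1,\dots,f_n$ by hypothesis. By Koberda's theorem on irredundant collections \cite[Theorem 1.1]{Koberda-RAAG_subgroups_of_MCGs}, which applies because admissibility implies irredundancy, there is a threshold $N_{\mathrm{alg}}$ such that for $\abs{p_i}\ge N_{\mathrm{alg}}$ the map $\Psi\colon A(\Gamma)\to\Mod(S)$ is an isomorphism onto its image. For the geometric statement I would appeal to the quasi-isometric embedding of Clay--Leininger--Mangahas \cite[Theorem 5.2]{CLM-RAAGs_in_MCG} in the partial pseudo-Anosov case and to Runnels \cite[Theorem 2]{Runnels_EffectiveGenerationRAAGsinMCG} in general; Runnels is precisely what is needed to accommodate the Dehn-twist-power generators that arise when some $S_i$ is an annulus. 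Together these furnish a threshold $N_{\mathrm{geo}}$ such that for $\abs{p_i}\ge N_{\mathrm{geo}}$ the map $\Psi$ is an injective q.i.-embedding of $A(\Gamma)$ into $\Mod(S)$, whence the image is undistorted.

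Finally I would set $N=\max\{N_0,N_{\mathrm{alg}},N_{\mathrm{geo}}\}$, where $N_0$ is the threshold from \Cref{main-CLM}, and check that all conclusions hold simultaneously for $\abs{p_i}\ge N$: injectivity, the identification with $A(\Gamma)$, and undistortion come from the previous paragraph, while the RGF structure relative to $\{\Psi(A(\Gamma_k))\}$ is supplied verbatim by \Cref{main-CLM}; the identifications $\Psi(A(\Gamma_k))\cong A(\Gamma_k)$ follow since global injectivity of $\Psi$ restricts to injectivity on each $A(\Gamma_k)$. The only point requiring genuine care --- and the closest thing here to a ``hard part'' --- is verifying that the hypotheses of the quoted theorems truly apply to our $f_i$, namely that admissibility yields Koberda's irredundancy and that Runnels' statement covers the mixed partial-pseudo-Anosov and Dehn-twist setting allowed by our fully supported hypothesis. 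Once these compatibility checks are made, the corollary is exactly the conjunction of the three thresholds.
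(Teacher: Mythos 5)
Your proposal is correct and matches the paper's own treatment: the paper gives no separate proof of \Cref{cor:main-CLM}, instead deriving it exactly as you do, by combining the threshold of \Cref{main-CLM} with those of Koberda, Clay--Leininger--Mangahas, and Runnels (the latter covering the Dehn-twist-power generators and, together with CLM, the mixed exponents $\abs{p_i}\ge N$) and taking the maximum. Your added observation that $A(\Gamma)\cong A(\Gamma_1)\ast\cdots\ast A(\Gamma_m)$ reconciles the free-product conclusion of \Cref{main-CLM} with global injectivity of $\Psi$ is exactly the implicit bookkeeping the paper relies on.
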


\begin{remark}
\Cref{main-CLM} is sharp in the sense that all conditions (\ref{main-thm-cond-subgraph})--(\ref{main-thm-cond-bigdist}) are necessary for the conclusion to hold for all large exponents $p_i$ (of course the conclusions may hold for some smaller exponents as well). This is because the definition of RGF requires hyperbolicity relative to a family of reducible subgroups.
Thus (\ref{main-thm-cond-subgraph}) is necessary in order for the RAAG $A(\Gamma)\cong \Psi(A(\Gamma))$ to be relatively hyperbolic, which by \cite[Proposition 1.3]{BehrstockDrutuMosher-RelHyp} is known to hold if and only if the defining graph $\Gamma$ is disconnected,  and (\ref{main-thm-cond-reducible}) is necessary for the subgroups $\Psi(A(\Gamma_k))$ to be reducible. Finally, in order for the coned-off Cayley graph to quasi-isometrically embed into the curve graph, all elements that are not conjugate into one of the reducible subgroups $\Psi(A(\Gamma_j))$ must act loxodromically on the curve graph. In particular, the product $f_\ell^{p_\ell} f_j^{p_j}$ must be pseudo-Anosov when the supports $S_\ell$ and $S_j$ belong to distinct subgraphs $\Gamma_k$, which is not guaranteed without the separation condition $d_S(\partial S_\ell, \partial S_j)\ge 3$ of (\ref{main-thm-cond-bigdist}).
\end{remark}

\subsection*{Combinations of reducible subgroups}

\Cref{main-CLM} gives conditions for when a collection of reducible elements will generate, after passing to sufficiently high powers, an RGF subgroup of $\Mod(S)$. Our next theorems expand on this in two orthogonal directions: firstly by generalizing from reducible elements $f_i$ to reducible subgroups $H_i$, and secondly by removing the necessity of raising to powers. Note that raising a reducible element $f_i$ to a power $f_i^{p_i}$ corresponds to passing to a finite index subgroup $\langle f_i^{p_i}\rangle$ of the reducible group $\langle f_i\rangle$ generated by the element. 
Thus in this context, raising the elements $f_i$ to powers is roughly analogous to passing to finite-index subgroups of the reducible groups $H_i$. This motivates:

\begin{question}
\label{quest:more_general}
Given a list $H_1,\dots, H_m$ of reducible subgroups of $\Mod(S)$:
\begin{enumerate}
\item\label{quest:no_powers} Under what conditions do the $H_i$ generate an RGF subgroup $\langle H_1,\dots, H_m\rangle$?
\item\label{quest:finite-index} Under what conditions can one pass to finite index subgroups $H'_i\le H_i$ that generate an RGF subgroup?
\end{enumerate}
In particular, what additional hypotheses are needed to guarantee the conclusion of \Cref{main-CLM} without raising the elements $f_i$ to powers?
\end{question}

In answering~\Cref{quest:more_general} we will formulate our conditions in terms of how the reducible subgroups are situated in the curve graph $\cc(S)$ of the surface.
To state these, we first associate to each reducible subgroup $H\le \Mod(S)$ a \textit{canonical reducing system} $\partial H$ (\Cref{def:canonical_reducing_sys}), which is the multicurve consisting of all simple closed curves with finite $H$--orbit and which are disjoint from all of other curves with finite $H$--orbit. 
This is a direct generalization to subgroups of the well-studied canonical reducing systems of reducible elements, and we use ideas from \cite{HandelThurston} to show $\partial H$ is non-empty whenever $H$ is infinite and reducible; see \Cref{lem:reducing_multicurve}. 
We then say a family $\{H_1,\dots, H_m\}$ of reducible subgroups is \define{$D$--separated} (\Cref{def:separated}) if their canonical reducing systems have pairwise distance at least $D$ in the curve graph; that is $d_S(\partial H_i, \partial H_j) \ge D$ for all $i\ne j$. We also say the family is \define{$A$--misaligned} (\Cref{def:misaligned}) if for all distinct indices $i,j,k$ the Gromov product ($\partial H_i \mid \partial H_k)_{\partial H_j}$ is at least $A$; this roughly means that $\partial H_j$ lies at least distance $A$ from the geodesic joining $\partial H_i$ and $\partial H_k$.

In \Cref{s:separability} we prove the following result, which addresses \Cref{quest:more_general} (\ref{quest:no_powers}) above:

\begin{introthm}\label{main-Loa}
There exist constants $D,A>0$ such that if $\Hc=\{H_1, \dots, H_n\}$ is a $D$--separated and $A$--misaligned family of torsion-free reducible subgroups of $\Mod(S)$, then $\langle H_1, \dots, H_n \rangle$ is isomorphic to $H_1\ast \dots \ast H_n$ and RGF relative to $\mathcal{H}$.
\end{introthm}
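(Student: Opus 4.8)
The plan is to carry out all of the analysis inside the curve graph $\cc(S)$, which is $\delta$--hyperbolic by Masur--Minsky, using the fact that each $H_i$ \emph{coarsely fixes} the point $c_i := \partial H_i \in \cc(S)$: every element of $H_i$ preserves the multicurve $\partial H_i$, so the orbit $H_i\cdot c_i$ has diameter at most one. We are thus in the situation of a group generated by ``elliptic'' subgroups $H_1,\dots,H_n$, each fixing a point $c_i$, with the $c_i$ pairwise $D$--separated and $A$--misaligned. Both assertions of the theorem will follow from a single estimate showing that the natural map to $\cc(S)$ is a quasi-isometric embedding on the Bass--Serre tree $T$ of the free product. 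Granting this, the RGF conclusion is essentially free: a free product $H_1\ast\cdots\ast H_n$ is hyperbolic relative to its factors, the coned-off Cayley graph is $G$--equivariantly quasi-isometric to $T$, and the factors $H_i$ are reducible by hypothesis, so the quasi-isometric embedding $T\hookrightarrow\cc(S)$ is precisely the curve-graph condition in \Cref{def:RGF}.

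First I would establish the free product structure by a ping-pong argument in $\cc(S)$. For each $i$ let $X_i$ be the ``shadow'' of $c_i$, consisting of those points of $\cc(S)$ that, seen from every $c_j$ with $j\neq i$, lie in the direction of $c_i$; concretely one can take $X_i=\{x : (x\mid c_j)_{c_i}\le B \text{ for all } j\neq i\}$ for a suitable threshold $B$. The $D$--separation guarantees that, once $D$ is large relative to $B$ and $\delta$, the sets $X_1,\dots,X_n$ are pairwise disjoint. The torsion-free hypothesis enters here: every nontrivial $h\in H_i$ has infinite order and fixes $c_i=\partial H_i$, hence must act with unbounded translation length on some complementary component of $\partial H_i$, which is what forces $h$ to displace directions at $c_i$. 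Verifying the ping-pong inclusions $h(X_j)\subseteq X_i$ for all $h\in H_i\smallsetminus\{1\}$ and $j\neq i$ then yields $\langle H_1,\dots,H_n\rangle\cong H_1\ast\cdots\ast H_n$ via the ping-pong lemma.

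For the quasi-isometric embedding I would estimate $d_{\cc(S)}(o, g\,o)$ for a fixed basepoint $o$ along a reduced word $g=h_{i_1}\cdots h_{i_\ell}$ (consecutive $i_k$ distinct, each $h_{i_k}\neq 1$). Writing $g_k=h_{i_1}\cdots h_{i_k}$ and $p_k=g_{k-1}c_{i_k}$, one has $g_kc_{i_k}=p_k$ because $h_{i_k}$ fixes $c_{i_k}$, so the points $p_1,\dots,p_\ell$ are the images of the cone points traversed by $g$ in $T$. Their consecutive distances are controlled by separation, since $d(p_k,p_{k+1})=d(c_{i_k},h_{i_k}c_{i_{k+1}})=d(c_{i_k},c_{i_{k+1}})\ge D$. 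The concatenation of geodesics $[p_k,p_{k+1}]$ is a quasi-geodesic---forcing $g\,o$ far from $o$ and proving the q.i.-embedding (the initial and terminal segments to $o$ and $g\,o$ contribute only bounded error)---provided each turn is wide, i.e.\ the Gromov product at $p_k$ is uniformly bounded. Translating by $g_{k-1}^{-1}$, this turning product equals $(c_{i_{k-1}}\mid h_{i_k}c_{i_{k+1}})_{c_{i_k}}$, so everything reduces to the following key estimate: there is $C=C(\delta,S)$ with $(c_a\mid h\,c_b)_{c_i}\le C$ for every nontrivial $h\in H_i$ and all $a,b\neq i$.

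This key estimate is where I expect the real difficulty to lie, and it is the step that genuinely uses all three hypotheses together. Because $h$ fixes $c_i$, a large value of $(c_a\mid h\,c_b)_{c_i}$ means that the geodesics $[c_i,c_a]$ and $h\,[c_i,c_b]$ fellow-travel for a long time, i.e.\ $h$ coarsely fixes a long initial geodesic emanating from $c_i$ toward $c_b$. The $A$--misalignment disposes of the ``aligned'' base directions (it controls $(c_a\mid c_b)_{c_i}$ itself when $a\neq b$), while the $D$--separation forces the direction from $c_i=\partial H_i$ toward any far multicurve $c_b$ to pass through a complementary subsurface $W$ of $\partial H_i$ on which $H_i$ acts with unbounded subsurface translation; combined with the torsion-freeness of $h$ this should prevent $h$ from coarsely fixing that direction for more than a bounded length. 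I would make this precise using Masur--Minsky subsurface projections together with the Bounded Geodesic Image theorem, and Bowditch's acylindricity of the $\Mod(S)$--action on $\cc(S)$ to cap the fellow-traveling length by a uniform constant $C$. With the estimate in hand, choosing $D$ and $A$ larger than the relevant functions of $C$ and $\delta$ makes the broken geodesics above uniform quasi-geodesics, simultaneously delivering the ping-pong inclusions, the free product isomorphism, and the equivariant quasi-isometric embedding $T\hookrightarrow\cc(S)$, and hence the RGF conclusion.
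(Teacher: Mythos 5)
Your skeleton coincides with the paper's proof almost step for step: the paper also runs a broken-geodesic argument along the images of the cone points of the Bass--Serre tree, gets side lengths at least $D-2$ from separation, bounds the turning Gromov products by a uniform constant, and closes with the local-to-global principle (\Cref{lem:local-to-global}); and your reduction of everything to the single estimate $(c_a\mid h\,c_b)_{c_i}\le C$ for nontrivial $h\in H_i$ is exactly the paper's reduction to bounding $(\alpha_i\mid h\alpha_k)_{\alpha_j}$. (Your separate ping-pong step is superfluous: the paper extracts injectivity of $H_1\ast\cdots\ast H_n\to\Mod(S)$ from the same lower bound via \Cref{lem:qi-to-CS-implies_RGF}, since a kernel element would have to move a distant type--1 vertex far in $\cc(S)$.)

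The genuine gap is the key estimate itself, which you explicitly defer, and your sketch of it is flawed in two places. First, you have misalignment's role backwards: ``disposes of the aligned base directions'' reads as an upper bound on $(c_a\mid c_b)_{c_i}$, whereas $A$--misalignment is the \emph{lower} bound $(c_a\mid c_b)_{c_i}\ge A$ (the tripod picture: from $c_i$ the directions toward $c_a$ and $c_b$ agree for length roughly $A$). This lower bound is precisely what licenses your ``i.e.''\ step: fellow-traveling of $[c_i,c_a]$ with $h[c_i,c_b]$ forces $h$ to coarsely fix an initial segment toward $c_b$ only once one knows $[c_i,c_a]$ and $[c_i,c_b]$ themselves fellow-travel. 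The paper instead proves $(\alpha\mid h\alpha)_{\partial H_i}\le K'$ for every multicurve $\alpha$ (\Cref{lem:loa-bound-gromov-product}, resting on the displacement estimate of \Cref{lem:definite_distance}, proved via \Cref{thm:minimal_translation_mangahas_cor} and \Cref{thm:BddGeodesicImage} --- this is where torsion-freeness, i.e.\ infinite order, enters), and then applies the four-point inequality: since $h$ fixes $\partial H_i$, one has $(h c_a\mid h c_b)_{c_i}\gtrsim(\partial H_a\mid\partial H_b)_{\partial H_i}\ge\min\{A,D-1\}$ (misalignment when $a\ne b$, separation when $a=b$), so in $\min\{(c_a\mid hc_b)_{c_i},\,(hc_a\mid hc_b)_{c_i}\}\le(c_a\mid hc_a)_{c_i}+\delta\le K'+\delta+O(1)$ the minimum must be the turning product. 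That the lower bound is unavoidable is shown by \Cref{ex:free_product}: a $D$--separated but only $1$--misaligned family for which the turning product, and the free-product conclusion, fail. Second, Bowditch acylindricity cannot cap the fellow-traveling as you suggest: it bounds the \emph{number} of elements almost fixing two far-apart points, but you only have the single element $h$, and its powers $h^m$ move the far point by an amount growing linearly in $m$, so no contradiction arises; the uniform subsurface translation bound of \Cref{thm:minimal_translation_mangahas_cor}, fed through \Cref{thm:BddGeodesicImage}, is what actually does this work in the paper.
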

This generalizes a recent theorem of Loa \cite[Theorem 1.1]{Loa}, 
which proves that if $H_\alpha$ and $H_\beta$ are abelian subgroups consisting of multitwists supported on multicurves $\alpha$ and $\beta$, then $\langle H_\alpha, H_\beta\rangle$ is a free product $H_\alpha\ast H_\beta$ and \emph{parabolically geometrically finite (PGF)} provided $\alpha$ and $\beta$ are sufficiently far apart in the curve graph. 
Recall that PGF is a more restrictive version of RGF requiring the reducible subgroups to be virtual multitwist groups; see \Cref{def:RGF}.
Note also that our misalignment assumption is vacuous when there are only two subgroups in the family $\mathcal{H}$.
Thus \Cref{main-Loa} generalizes \cite[Theorem 1.1]{Loa} in two ways: by allowing for arbitrary torsion-free reducible subgroups, rather than virtual multitwist groups, and by accommodating families of $3$ or more subgroups. 
 We will see in \Cref{examples} that the $A$--misaligned and torsion-free assumptions are both necessary in \Cref{main-Loa}.

In the setup of \Cref{main-CLM}, it is not hard to see that $d_S(\partial S_j, \partial G_k)\le 1$ for each subsurface $S_j$ belonging to the subgraph $\Gamma_k$. Thus, \Cref{main-Loa} allows us to strengthen the conclusion of \Cref{main-CLM} in certain circumstances:

\begin{corollary}
There exist $A, D >0$ so that, under the hypotheses of \Cref{main-CLM}, if the subsurfaces satisfy $d_S(\partial S_\ell, \partial S_j)\ge D$ and $(\partial S_i \mid \partial S_\ell)_{\partial S_j} \ge A$ whenever $S_i, S_j, S_\ell$ belong to distinct subgraphs $\Gamma_k$ of $\Gamma$, then  $\langle f_1,\dots, f_n\rangle$ itself is RGF with respect to $\{G_1,\dots, G_m\}$.
\end{corollary}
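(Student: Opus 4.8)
The plan is to obtain this as a direct application of \Cref{main-Loa} to the family $\{G_1,\dots,G_m\}$, exploiting the estimate recorded just above that $d_S(\partial S_j,\partial G_k)\le 1$ whenever the support $S_j$ lies in the subgraph $\Gamma_k$. Since each generator $f_i$ belongs to the subgroup $G_k$ indexed by the subgraph containing $S_i$, and each $G_k$ is generated by precisely these $f_i$, we have $\langle f_1,\dots,f_n\rangle=\langle G_1,\dots,G_m\rangle$. Letting $D_0,A_0$ denote the constants produced by \Cref{main-Loa}, it therefore suffices to choose $D,A$ so that the corollary's hypotheses force $\{G_1,\dots,G_m\}$ to be a $D_0$--separated, $A_0$--misaligned family of torsion-free reducible subgroups; the conclusion that $\langle f_1,\dots,f_n\rangle\cong G_1\ast\cdots\ast G_m$ is RGF relative to $\{G_1,\dots,G_m\}$ then follows immediately.

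The separation and misalignment conditions translate by coarse bookkeeping in $\cc(S)$. As any multicurve has diameter at most $1$ in $\cc(S)$, the estimate $d_S(\partial S_j,\partial G_k)\le 1$ says that $\partial G_k$ and $\partial S_j$ are coarse points within distance $1$ of one another whenever $S_j\in\Gamma_k$. For separation, fixing supports $S_\ell\in\Gamma_i$ and $S_j\in\Gamma_{i'}$ with $i\ne i'$ and applying the triangle inequality gives
\[
d_S(\partial G_i,\partial G_{i'})\ \ge\ d_S(\partial S_\ell,\partial S_j)-d_S(\partial S_\ell,\partial G_i)-d_S(\partial S_j,\partial G_{i'})\ \ge\ D-2,
\]
so taking $D=D_0+2$ makes the family $D_0$--separated. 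For misalignment, the Gromov product is defined from the metric by $(x\mid y)_z=\tfrac12\bigl(d_S(x,z)+d_S(y,z)-d_S(x,y)\bigr)$, so moving each of its three entries a bounded distance changes it by at most a universal constant $C$; since $\partial G_i,\partial G_j,\partial G_k$ lie within distance $1$ of boundaries $\partial S_\ell,\partial S_{\ell'},\partial S_{\ell''}$ of supports in three distinct subgraphs, we obtain $(\partial G_i\mid\partial G_k)_{\partial G_j}\ge(\partial S_\ell\mid\partial S_{\ell''})_{\partial S_{\ell'}}-C\ge A-C$, and taking $A=A_0+C$ makes the family $A_0$--misaligned.

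It remains to verify that each $G_k$ is torsion-free and reducible. Reducibility is exactly hypothesis~(\ref{main-thm-cond-reducible}) of \Cref{main-CLM}, which moreover supplies a curve disjoint from every support in $\Gamma_k$ and hence a multicurve fixed componentwise by all the (pure) generators $f_i$ of $G_k$. I expect torsion-freeness to be the main obstacle: a subgroup of $\Mod(S)$ generated by infinite-order elements need not be torsion-free, and, unlike in \Cref{main-CLM}, where passing to high powers $f_i^{p_i}$ places the generators in a torsion-free level subgroup, here we work with the $f_i$ themselves, whose supports within a single $\Gamma_k$ may overlap. The natural route is to use purity of the generators together with the description of the stabilizer of the fixed multicurve as an iterated extension of mapping class groups of the complementary pieces by the free-abelian twist subgroup, and the fact that an extension of torsion-free groups is torsion-free; the delicate point is controlling the induced action on those pieces where several supports overlap. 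Once torsion-freeness is in hand, \Cref{main-Loa} applies and yields the corollary.
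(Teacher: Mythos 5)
Your first two paragraphs are exactly the paper's (essentially unwritten) proof: the corollary is stated immediately after the observation that $d_S(\partial S_j,\partial G_k)\le 1$ whenever $S_j$ lies in $\Gamma_k$, and the intended argument is precisely your transfer of separation via the triangle inequality and of misalignment via the stability of Gromov products under bounded perturbation of the entries, as in \eqref{eqn:GP-stability}, followed by an application of \Cref{main-Loa} to $\{G_1,\dots,G_m\}$. Your constants $D=D_0+2$ and $A=A_0+C$ are correct, and reducibility of each $G_k$ is indeed hypothesis~(\ref{main-thm-cond-reducible}) of \Cref{main-CLM}.

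The genuine gap is the one you flagged and left open: torsion-freeness of the $G_k$, which \Cref{main-Loa} requires. You are right that this is the sticking point, but the route you sketch --- proving each $G_k$ torsion-free from the structure of the multicurve stabilizer as an iterated extension --- cannot succeed in general, because the hypotheses of \Cref{main-CLM} do not force the $G_k$ to be torsion-free. Every element of $G_k$ is supported in the complement $Z$ of the reducing multicurve, and the group $P$ of such mapping classes is the quotient of the torsion-free group $\Mod(Z,\partial Z)$ by the subgroup generated by the twist differences $T_cT_{c'}^{-1}$ along the paired boundary components; such quotients can contain torsion via roots of boundary multitwists. Concretely, for $S$ closed of genus $2$ and $\alpha$ nonseparating, the order-two mapping class covering the elliptic involution of the torus fixes $\alpha$ with orientation and sides: its natural lift $\tilde g$ to $\Mod(Z,\partial Z)$ satisfies $\tilde g^2 = T_cT_{c'}$, so the adjusted lift $\tilde g T_{c'}^{-1}$ squares to $T_cT_{c'}^{-1}$ and descends to a nontrivial involution in $P$. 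Since $P$ is generated by Dehn twists about curves contained in $Z$, this involution is a finite product of twists about curves disjoint from $\alpha$, whose annuli form an admissible family within a single subgraph $\Gamma_k$ satisfying all hypotheses of \Cref{main-CLM} (and, taking $m=2$, the misalignment hypothesis of the corollary is vacuous). Where torsion-freeness is used in \Cref{main-Loa} is essential, not cosmetic: \Cref{lem:definite_distance} and \Cref{lem:loa-bound-gromov-product} apply only to infinite-order elements, and \Cref{E:example_torsion_free} shows the method genuinely breaks with torsion. So the corollary must either be read with torsion-freeness of the $G_k$ as an additional (implicit) hypothesis matching \Cref{main-Loa}, or the RGF conclusion must be obtained by a separate argument in the presence of torsion (as is done ad hoc in \Cref{E:example_torsion_free}); your proposal, as written, cannot be completed along the lines of its final paragraph.
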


When the reducing systems $\partial H_i$ are nearby in the curve graph, one cannot expect the subgroup $\langle H_1,\dots, H_m\rangle$ to be RGF (indeed, the constant $D$ from \Cref{main-Loa} is ineffective and presumably quite large).
However, our final theorem, which answers \Cref{quest:more_general}(\ref{quest:finite-index}),  shows that as long the family is merely $5$--separated,
 one can always achieve reducible geometric finiteness by passing to finite index subgroups.

\begin{introthm}\label{main-subgroups}
Let $G_1, \dots, G_m,$ be infinite reducible subgroups of $\Mod(S)$ that are pairwise $5$--separated. Then there are finite index subgroups $G_i'\le G_i$ so that for any further subgroups $H_i \le G_i'$ which are still infinite, the group $\langle H_1, \dots, H_n \rangle$ is isomorphic to $H_1 \ast \dots \ast H_m$ and is  RGF relative to $\{H_1,\dots, H_m\}$.
\end{introthm}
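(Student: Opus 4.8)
The plan is to build the free splitting and the curve-graph quasi-isometric embedding required by \Cref{def:RGF} directly, via a ping-pong/quasigeodesic argument with cone points placed at the canonical reducing systems. First I would fix the finite-index subgroups $G_i'\le G_i$ to be torsion-free and \emph{pure}: each $G_i'$ fixes every component of its reducing system $\partial G_i$ (nonempty by \Cref{lem:reducing_multicurve}) and every complementary component, and---taking the preimage of $(N\Z)^{\partial G_i}$ under the twisting homomorphism $G_i'\to \Z^{\partial G_i}$---the twisting of each element about each curve of $\partial G_i$ is divisible by a large $N$. Such $G_i'$ exist by residual finiteness of $\Mod(S)$. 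Two facts are then automatic and drive everything: since each $h\in H_i\le G_i'$ fixes $\partial G_i$, it preserves the function $d_S(\,\cdot\,,\partial G_i)$ on $\cc(S)$; hence for a reduced word $w=h_1\cdots h_k$ (with $h_s\in H_{i_s}$, $i_s\ne i_{s+1}$) the cone points $x_s\colonequals (h_1\cdots h_{s-1})\,\partial G_{i_s}$ satisfy $d_S(x_{s-1},x_s)=d_S(\partial G_{i_{s-1}},\partial G_{i_s})\ge 5$, while every turning Gromov product is bounded above by $D_{\max}\colonequals\max_{i\ne j} d_S(\partial G_i,\partial G_j)<\infty$.

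Next I would show the sequence $(x_s)$ is a uniform quasigeodesic in the hyperbolic space $\cc(S)$ with $d_S(\gamma_0,w\gamma_0)\gtrsim k$; granting this, nontrivial reduced words move the basepoint a definite amount, yielding both the isomorphism $\langle H_1,\dots,H_m\rangle\cong H_1\ast\cdots\ast H_m$ and (since the coned-off Cayley graph of a free product is quasi-isometric to its Bass--Serre tree, mapped equivariantly by $gH_i\mapsto g\partial G_i$) the required QI-embedding, with relative hyperbolicity rel $\{H_i\}$ then automatic. The engine for the quasigeodesic claim is subsurface projection together with the Behrstock inequality and the bounded geodesic image theorem of Masur--Minsky. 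For each syllable I would mark the translates $(h_1\cdots h_{s-1})\,W$ of the complementary components and reducing annuli of $\partial G_{i_s}$ on which $h_s$ acts nontrivially; the $5$-separation guarantees that $\partial G_{i_{s\pm 1}}$ project into every complementary component of $\partial G_{i_s}$ with coarsely constant image, so these marked surfaces are linearly ordered by the Behrstock inequality, and the large twisting ($\ge N-O(1)$) at the reducing annuli forces, via bounded geodesic image, each geodesic $[\gamma_0,w\gamma_0]$ to pass near the $x_s$ in order. This is the same machinery underlying \Cref{main-Loa}, but run with the finite-index normalization in place of the misalignment hypothesis; since passing to finite index cannot change $\partial G_i$, that theorem cannot simply be imported.

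The main obstacle is the absence of a misalignment hypothesis: with three or more factors one must rule out backtracking of $(x_s)$, and a syllable $h_s$ that acts only as a \emph{partial pseudo-Anosov of small curve-graph translation length} (such elements can persist in every finite-index subgroup, as conjugation preserves translation length) contributes only a small projection to its own support and so need not produce a sharp turn on its own. Resolving this is the technical heart: where the supports of consecutive syllables fill $S$---which $5$-separation forces at the level of the reducing systems, and which the finite-index purity and large twisting arrange for the twist-dominated directions---the relevant combinations are pseudo-Anosov with translation length at least the universal constant $\tau_0(S)>0$, supplying a uniform floor; the remaining low-translation partial-pseudo-Anosov directions must instead be controlled by showing that their marked subsurfaces still interleave consistently with the twisting subsurfaces in the Behrstock ordering, so that no cancellation occurs. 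Assembling these into a single uniform linear lower bound valid for \emph{all} reduced words is the step I expect to be hardest.
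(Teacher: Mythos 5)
Your overall skeleton (Bass--Serre tree, equivariant map sending cosets to translated reducing systems, then Behrstock/BGIT persistence of subsurface projections) is the same as the paper's: the paper proves \Cref{main-subgroups} by combining a ``displacing'' criterion (\Cref{def:displacing} and \Cref{thm:displacing}, whose proof runs exactly the projection-persistence argument of \Cref{cor:projections_persist} that you sketch) with a finite-index construction (\Cref{prop:finite-index-displacing-family}). But your proposal has a genuine gap precisely where you say you expect the hardest step, and your choice of $G_i'$ cannot close it. Normalizing via the twisting homomorphism $G_i'\to \Z^{\partial G_i}$ only makes \emph{annular} projections at the reducing curves large; it does nothing to elements of $G_i'$ with zero twisting that act as partial pseudo-Anosovs on complementary components. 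By \Cref{thm:minimal_translation_mangahas_cor}, such an element $h$ satisfies only $d_W(h\mu,\mu)\ge c$ on its support $W$, where $c=c(S)$ may be far below the threshold $\bgit+O(\bsi)$ that both \Cref{thm:BddGeodesicImage} and \Cref{thm:behrstock} require. Sub-threshold projections simply give the machinery no purchase: BGIT cannot force geodesics near the cone points, and the Behrstock ordering you invoke is only available once projections exceed $\bsi$, so ``showing the marked subsurfaces interleave consistently'' is not a repair but a restatement of the problem. Without a uniform lower bound for \emph{every} nontrivial syllable, neither injectivity of the map to the free product nor the quasi-isometric embedding follows.

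The paper's missing idea is \Cref{lem:displacing_for_one_group}, which solves your hard case in one stroke and in a different way than you attempt: fix a threshold $L$, use residual finiteness of $\Mod(S)$ \cite{Grossman-ResidualFinitenessMCG} to find a finite-index $\Gamma\le\Mod(S)$ in which every nontrivial element has word length exceeding the quasi-isometry constant of the Masur--Minsky distance formula (\Cref{thm:DistanceFormula}) at threshold $L+3K$, and set $G_i'=\Gamma\cap G_i$. Then \emph{every} nontrivial $g\in G_i'$ --- including your troublesome low-translation partial pseudo-Anosovs, which are excluded from $G_i'$ unless raised to large powers --- admits \emph{some} subsurface $Y$ with $d_Y(g\mu,\mu)\ge L+3K$, and a three-line triangle-inequality computation using $g\,\partial G_i=\partial G_i$ shows $Y$ is automatically disjoint from $\partial G_i$. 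This gives condition (3) of \Cref{def:displacing} with $\beta_i=\partial G_i$ (here your observation that finite index does not change $\partial G_i$, \Cref{rem:reducing_system_for_subgroups}, is used correctly), and the $5$--separation gives condition (2); note the paper deliberately takes $\beta_i=\partial G_i$ rather than $\partial H_i$ so that passing to the possibly infinite-index $H_i\le G_i'$ costs nothing. Your instinct that misalignment cannot be imported from \Cref{main-Loa} is right, and your uniform pseudo-Anosov translation-length floor is a red herring: the paper never needs products of syllables to be pseudo-Anosov with controlled translation length, only the displacing property plus \Cref{cor:projections_persist}.
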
 

Since raising to powers is analogous to passing to finite index, \Cref{main-subgroups} is closely related to \Cref{main-CLM} and, in fact, easily implies a slight variation on this result; this is accomplished in \Cref{thm:weak-CLM}.

\begin{remark}
\label{rem:without_boundary}
We work in the context of surfaces without boundary. When a surface $S$ has boundary, the mapping class group $\Mod(S)$ has a nontrivial center 
consisting of twists about boundary components.  So, the center acts trivially on the curve graph and its action is not captured by orbits in $\cc(S)$. Consequently, when $S$ has nonempty boundary, 
the coned-off Cayley graph of $G\le \Mod(S)$ cannot quasi-isometrically embed into the curve graph $\cc(S)$ 
unless $G$ has trivial intersection with the center.
In order to extend our results to this setting of surfaces with boundary, it is necessary to assume the reducible subgroups $H_i$ intersect the center trivially.

\end{remark}

\subsection*{Organization of the paper} In \Cref{background} we discuss the notation we will use throughout the paper and review the necessary background on hyperbolic spaces, curve graphs, subsurface projections, mapping class groups and the distance formula.
In \Cref{sec:reducible_subgroups} we discuss reducible subgroups and their canonical reducing systems, while in \Cref{ssub:RGF} we recall the notion of relative hyperbolicity and the definition of reducibly geometrically finite subgroups of the mapping class group. In \Cref{s:cayley_trees} we describe the Bass--Serre tree associated to a free product and define an equivariant map into the curve graph that will be used in the proofs of our main theorems. In \Cref{s:displacing} we prove \Cref{main-subgroups} as consequence of  \Cref{thm:displacing}, where we use in a crucial way the notion of ``displacing'' families, and in \Cref{sec:RAAGS} we prove \Cref{main-CLM} by using a technical result (\Cref{cor:general_projections_persist}) related to the Behrstock inequality. In \Cref{s:separability} we prove \Cref{main-Loa} generalizing ideas from Loa \cite{Loa} and where we use crucially the notions of separability and misalignment. Finally in \Cref{examples} we discuss examples illustrating our constructions and explaining why the assumptions of our theorems are necessary.

\subsection*{Acknowledgments} We thank Chris Leininger and Brian Udall for insightful conversations. We also thank the referee for helpful comments. S.D.~was partially supported by U.S.~National Science Foundation (NSF) grants DMS-2005368 and DMS-2405061. H.H.~was supported by NSF grant DMS-2303365.	S.M.~was partially supported by U.S. National Science Foundation (NSF) grant DMS-1848346 (NSF CAREER). The authors started their collaboration at the ``Collaborative Research Project Workshop'' organized with support from the U.S. NSF grant DMS-1839968 (NSF RTG).

\section{Background and notation}
\label{background}

\subsection{Hyperbolic metric spaces}
\label{S:hyperbolic}
In this subsection we recall the fundamentals of coarsely hyperbolic metric spaces, in the sense of Gromov. 
Throughout, $(X,d)$ denotes a geodesic metric space and all triangles are taken to have geodesic edges.

For $x,y,z \in X$, the \textit{Gromov product} of $x$ and $y$ with respect to $z$ is 
\[ (x \mid y)_z= \frac{1}{2}(d(x,z)+ d(y,z)- d(x,y)). \]
We note the trivial fact (coming from the triangle inequality) that
\begin{equation}
\label{eqn:GP-stability}
\abs{(y\mid  x)_z - (x\mid w)_z} \le d(y,w). 
\end{equation}

We define $(X,d)$ to be a {\em hyperbolic metric space} \cite[Chapitre 1, D\'efinition 1.4]{CDP} if there exists a $\delta>0$ such that the following holds for all $x,y,z,w\in X$:
\begin{equation}
\label{eqn:hyperblicity}
(x\mid y)_w\geq \min\{(x\mid z)_w,(y\mid z)_w\}-\delta.
\end{equation}
In this case, we say $(X,d)$ is $\delta$-hyperbolic. 
 
One consequence of hyperbolicity is that if $(X,d)$ is $\delta$-hyperbolic then geodesic triangles are {\em $4\delta$-thin}, meaning for any geodesic triangle $T$, any edge of $T$ is contained in the $4\delta$-neighborhood of the other two edges of $T$ \cite[Proposition 3.6]{CDP}. 
Another consequence is that \textit{inner triangles} of (geodesic) triangles are small, in the following sense. Let $T$ be a triangle with vertices $x,y,z$ and edges $[x,y],[y,z],$ and $[x,z]$. Then, as pictured in \Cref{fig:tripod}, there exist unique points $a\in [x,y]$, $b\in [y,z]$, and $c\in [x,z]$ such that $d(a,x)=d(c,x)$, $d(a,y)=d(b,y)$, and $d(c,z)=d(b,z)$ (see discussion in \cite{BH} page 408 before Definition 1.16).
An {\em inner triangle} of $T$ is a geodesic triangle with vertices $a,b,c$. If $(X,d)$ is a $\delta$-hyperbolic metric space, then each edge of an inner triangle has length $\leq 4\delta$. For a proof, see \cite[Chapter III.H Proposition 1.17]{BH} or \cite[Chapitre 1, Proposition 3.2]{CDP}.

 \begin{figure}[h]
 	\centering
 	\begin{tikzpicture}[scale=.7,rotate=65]
  \begin{scope}[thick]
  \draw (0,0) coordinate (x);
  \draw (1,7) coordinate (z);
  \draw (5,0) coordinate (y);
  \draw (x) to [out=50,in=-80] coordinate[pos=.3] (c) (z);
  \draw (x) to[out=50, in=150] coordinate[pos=.4] (a) coordinate[pos=.45] (q) (y);
  \draw (z) to[out=-80, in=150] coordinate[pos=.6] (b) coordinate[pos=.8]
  (w) (y);
  \end{scope}

  \draw[thick,blue,dashed] (z) -- coordinate[pos=.65] (gp) (q);
  \path (z) -- coordinate[pos=.4] (label) (y);

  \draw[->,blue,shorten >=3pt] (label) -- (gp);
  \path (label) node[above,blue] {$(x\mid y)_z$};
  \begin{scope}[]
  \draw (c) to[out=-10,in=90] (a);
  \draw (b) to[out=210,in=90] (a);
  \draw (c) to[out=-10,in=210] (b);
  \end{scope}

  \def\s{.05}
  \draw[fill] (x) circle (\s) node[below right] {$x$};
  \draw[fill] (y) circle (\s) node[ right] {$y$};
   \draw[fill] (z) circle (\s) node[left] {$z$};
  \draw[fill] (b) circle (\s) node[above] {$b$};
  \draw[fill] (c) circle (\s) node[below left] {$c$};
  \draw[fill] (a) circle (\s) node[below right] {$a$};
  
\end{tikzpicture}
 	\caption{In a hyperbolic metric space, the geodesic triangle pictured here has inner triangle with vertices $a,b,c$ within distance $4\delta$ of each other. The Gromov product $(x\mid y)_z$ approximates the distance from $z$ to any geodesic from $x$ to $y$ within $4\delta$.} 
 	\label{fig:tripod}
 \end{figure} 
 
It is easy to see that $(x\mid  y)_z$ is always bounded above by the minimal distance $d(z,[x,y])$ from $z$ to any geodesic joining $x$ and $y$. Indeed, for any $t\in [x,y]$ the triangle inequality immediately gives $(x\mid  y)_z \le d(z,t)$.
A fundamental feature of hyperbolicity is a coarse inequality in the other direction as well, so that the Gromov product coarsely measures how close a side of a geodesic triangle is to the opposite vertex. So in hyperbolic spaces we can see this as a geometric interpretation of the quantity. Precisely, if $X$ is $\delta$-hyperbolic, then for all $x,y,z\in X$ one has:
\begin{equation}
\label{eqn:gromov-product-to-geod}
  d(z,[x,y])-4\delta \leq (x\mid y)_z\leq d(z,[x,y]).
\end{equation}
For a proof, see \cite[Chapitre 3, Lemme 2.7]{CDP}. Notice in particular that when the Gromov product $(x\mid y)_z$ is small, it means that the point $z$ is close to the geodesic $[x,y]$. We will use this point of view to give us a geometric interpretation of the local-to-global principle we will state below. 

It is well known that hyperbolic metric spaces have the ``local-to-global'' property, meaning that local geodesics are in fact global quasi-geodesics. This phenomenon is usually formulated in terms of parameterized quasi-geodesics, see, for instance, \cite[Chapitre 3]{CDP}. For our purposes, it will suffice to use the following formulation in terms of Gromov products and the reverse triangle inequality. While this basic idea is well-known, we include a short proof for completeness. 
A geometric interpretation of this statement is that if a piece-wise geodesic path is built from long geodesic segments with angle between segments close to $\pi$, then the path is a quasi-geodesic and the concatenation points are close to the geodesic between the extremes.

\begin{lemma}[Local-to-Global]
\label{lem:local-to-global}
Let $X$ be $\delta$-hyperbolic and suppose $x_0,\dots,x_n\in X$ are such that $(x_{i-1}\mid  x_{i+1})_{x_i}\le A$ and $d(x_i,x_{i+1})>4A+24\delta$ for each $i$. Then any geodesic joining $x_0$ and $x_n$ passes within $D=A+6\delta$ of each point $x_i$ and 
\[d(x_0,x_n) \ge d(x_0,x_1) + \dots + d(x_{n-1},x_n) - 2D(n-1).\]
In particular, $d(x_0,x_n) \le \sum_{j=1}^n d(x_{j-1},x_j) \le 2 d(x_0,x_n)$.
\end{lemma}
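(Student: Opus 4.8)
The plan is to recast both conclusions as upper bounds on Gromov products and prove them by induction. Writing $L_i = d(x_{i-1},x_i)$ and $g_i = (x_0\mid x_{i+1})_{x_i}$ for $1\le i\le n-1$, the definition of the Gromov product unwinds to the telescoping identity $d(x_0,x_n)=\sum_{i=1}^n L_i-2\sum_{i=1}^{n-1}g_i$. Thus the displayed lower bound on $d(x_0,x_n)$ follows the moment I show $g_i\le D$ for each $i$, and the final ``in particular'' clause is then short arithmetic: since each $L_i>4A+24\delta=4D$ we have $\sum L_i>4Dn\ge 4D(n-1)$, so $2d(x_0,x_n)\ge 2\sum L_i-4D(n-1)>\sum L_i\ge d(x_0,x_n)$. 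For the ``passes within $D$'' assertion I will instead bound $(x_0\mid x_n)_{x_i}$ and feed it into \eqref{eqn:gromov-product-to-geod}.

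The core estimate is $g_i\le A+\delta$, proved by induction on $i$ with base case $g_1=(x_0\mid x_2)_{x_1}\le A$. The one genuine difficulty here is that \eqref{eqn:hyperblicity} only ever yields \emph{lower} bounds on Gromov products, whereas $g_i\le A+\delta$ is an upper bound (equivalently a lower bound on $d(x_0,x_{i+1})$, i.e.\ an efficiency statement for the path). To extract it I will use the four-point reformulation of \eqref{eqn:hyperblicity}: for all $p,q,r,s$ one has $d(p,q)+d(r,s)\le\max\{d(p,r)+d(q,s),\,d(p,s)+d(q,r)\}+2\delta$, which follows from \eqref{eqn:hyperblicity} by expanding the three-point inequality with basepoint $s$. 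Applying this to $p,q,r,s=x_0,x_i,x_{i+1},x_{i-1}$ and writing $h_i:=(x_{i-1}\mid x_{i+1})_{x_i}\le A$, the long-segment hypothesis $L_i>4A+24\delta$ (together with the inductive bound on $g_{i-1}$) forces the maximizing term on the right, and comparing the two relevant sums gives $d(x_0,x_{i+1})\ge d(x_0,x_i)+L_{i+1}-2h_i-2\delta$, i.e.\ $g_i\le h_i+\delta\le A+\delta$. The distance bound and the ``in particular'' clause then drop out as above.

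For the geodesic claim I will show $(x_0\mid x_n)_{x_i}\le A+2\delta$, whence $d(x_i,[x_0,x_n])\le(x_0\mid x_n)_{x_i}+4\delta\le A+6\delta=D$ by \eqref{eqn:gromov-product-to-geod}. The naive idea of passing from ``$x_i$ is coarsely between $x_0$ and $x_{i+1}$'' (namely $g_i\le A+\delta$) to ``$x_i$ is coarsely between $x_0$ and $x_n$'' by advancing the endpoint one vertex at a time fails, since each step costs an additive $\delta$ and the error accumulates to order $n\delta$. Instead I will isolate a transitivity-of-coarse-betweenness lemma: if $(u\mid v)_w\le\alpha$, $(w\mid v')_v\le\beta$, and $d(w,v)>\alpha+\beta+\delta$, then $(u\mid v')_w\le\alpha+\delta$. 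This is proved by applying \eqref{eqn:hyperblicity} at $w$ to the triple $u,v',v$ and using the identity $(v'\mid v)_w=d(w,v)-(w\mid v')_v$ to rule out $(v'\mid v)_w$ as the minimizer. I then prove $(x_0\mid x_n)_{x_i}\le A+2\delta$ by induction on $n$: for the last interior vertex this is just $g_{n-1}\le A+\delta$, while for $1\le i\le n-2$ I apply the lemma with $w,v,u,v'=x_i,x_{i+1},x_0,x_n$, taking $\alpha=g_i\le A+\delta$ and feeding in $(x_i\mid x_n)_{x_{i+1}}\le A+2\delta$, which is the inductive hypothesis applied to the strictly shorter subpath $x_i,\dots,x_n$; the hypothesis $L_{i+1}>4A+24\delta$ comfortably supplies the separation $d(x_i,x_{i+1})>\alpha+\beta+\delta$.

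The step I expect to be the main obstacle is precisely this direction-of-inequality issue: both conclusions are efficiency statements that amount to \emph{upper} bounds on Gromov products, while the only hypothesis-ready tool, \eqref{eqn:hyperblicity}, points the other way. The real content lies in packaging the four-point inequality and the transitivity lemma so that the long-segment hypothesis cleanly resolves the relevant max/min; once those two devices are set up, both the telescoping distance estimate and the endpoint-stable bound $(x_0\mid x_n)_{x_i}\le A+2\delta$ follow by routine inductions.
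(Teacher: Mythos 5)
Your proposal is correct, and it takes a genuinely different route from the paper. The paper proves both conclusions simultaneously by a single induction on $n$ that stays in the geodesic picture: by the inductive hypothesis a geodesic $[x_i,x_n]$ passes within $D$ of $x_{i+1}$, which via \eqref{eqn:GP-stability} makes $(x_{i+1}\mid x_n)_{x_i}$ and $(x_0\mid x_{i-1})_{x_i}$ both large, and then two applications of \eqref{eqn:hyperblicity} force $(x_0\mid x_n)_{x_i}\le A+2\delta$, after which \eqref{eqn:gromov-product-to-geod} yields the point $y_i\in[x_0,x_n]$ near $x_i$ and the distance estimate follows by splitting at $y_i$. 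You instead work geodesic-free until the last step: the exact telescoping identity $d(x_0,x_n)=\sum_i L_i-2\sum_i g_i$ reduces the distance bound to the single estimate $g_i\le A+\delta$, which you extract from the four-point reformulation of \eqref{eqn:hyperblicity} (your case analysis there is sound: the ``wrong'' maximizer forces $L_i\le g_{i-1}+h_i+\delta\le 2A+2\delta$, contradicting $L_i>4A+24\delta$), and your transitivity lemma, whose proof via the identity $(v'\mid v)_w+(w\mid v')_v=d(w,v)$ is correct, gives the endpoint-stable bound $(x_0\mid x_n)_{x_i}\le A+2\delta$ with no accumulation of error --- amusingly, exactly the same interior bound the paper reaches. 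What your approach buys: an exact (rather than coarse) decomposition of $d(x_0,x_n)$, a slightly sharper sum estimate with loss $2(A+\delta)(n-1)$ instead of $2D(n-1)$, and an argument valid in non-geodesic $\delta$-hyperbolic spaces except for the final invocation of \eqref{eqn:gromov-product-to-geod}. What the paper's approach buys: a shorter, single intertwined induction that never needs the four-point condition or a separate betweenness lemma, at the cost of threading the geodesic-proximity statement through every inductive step.
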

\begin{proof}
The proof is by induction on $n$. The base case $n=1$ is trivial, and the case $n=2$ follows from (\ref{eqn:gromov-product-to-geod}), which provides a point $y_1\in [x_0,x_2]$ so that $d(y_1,x_1)\leq A+4\delta\leq D$ and hence
\[d(x_0,x_2) = d(x_0,y_1) + d(y_1,x_2) \ge d(x_0,x_1) + d(x_1,x_2) - 2D.\]

Now fix $n> 2$ and any index $0<i<n$. By induction, the geodesic $[x_i,x_n]$ contains a point $y$ within $D$ of $x_{i+1}$. Since $d(x_i,y) + d(y,x_n) = d(x_i,x_n)$, using (\ref{eqn:GP-stability}) we find that
\[(x_{i+1}\mid x_n)_{x_i} \ge (y\mid x_n)_{x_i} - D = d(x_i,y) - D \ge d(x_i,x_{i+1}) - 2D > A+2\delta.\]
Similarly $(x_0\mid x_{i-1})_{x_i} >A+2\delta$. Two applications of (\ref{eqn:hyperblicity}) now gives
\[A+2\delta \ge (x_{i-1}\mid x_{i+1})_{x_i} + 2\delta
\ge \min\{(x_{i-1}\mid x_0)_{x_i}, (x_{0}\mid x_{n})_{x_i}, (x_n\mid x_{i+1})_{x_i}\}.\]
Since the first and last quantities in the minimum have been seen to be large, this is only possible if $(x_0\mid x_n)_{x_i} \le A+2\delta$ which, by (\ref{eqn:gromov-product-to-geod}), provides a point $y_i\in [x_0,x_n]$ with $d(y_i,x_i)\le A+6\delta=D$. Hence, again by induction, we find that
\[d(x_0,x_n) \ge d(x_0,x_i) + d(x_i,x_n) - 2D
\ge -2D(n-1) + \sum_{j=1}^n d(x_{j-1},x_j).\]
The final claim is now an immediate consequence: The upper bound on $d(x_0,x_n)$ is just the triangle inequality, and the lower bound holds since the hypotheses ensure $d(x_{j-1},x_j) \ge 4D$ and thus $d(x_{j-1},x_j) - 2D \ge \frac{1}{2}d(x_{j-1},x_j)$ for each $j$.
\end{proof}

\subsection{Relative hyperbolicity}
\label{sec:rel_hyp}

Let $G$ be a finitely generated group and $\Gamma$ its Cayley graph with respect to some finite generating set $X$.  Given a finite list of subgroups $\{H_1, \ldots, H_l\}$, the associated \textit{coned-off Cayley graph} $\widehat{\Gamma}(G, \{H_1, \ldots, H_l\})$ is the graph obtained from $\Gamma$ by adding a new ``coset vertex'' $gH_i$ for every left coset of each subgroup $H_i$ and by adding edges of length $\frac{1}{2}$ from $gH_i$ to each element in $gH_i$. Up to quasi-isometry, this graph does not depend on the generating set $X$.

In Farb's original paper on relatively hyperbolic groups \cite{Farb}, $G$ is declared to be \define{hyperbolic relative to the subgroups $H_1,\dots,H_l$} if the coned-off Cayley graph $\widehat{\Gamma}(G;\{H_1,\dots,H_l\})$ is Gromov hyperbolic and satisfies a technical \define{bounded coset penetration} (BCP) property dictating how quasi-geodesics intersect the cosets $gH_i$. Bowditch later gave an equivalent definition that is better suited to our purposes:

\begin{definition}[{Bowditch \cite{Bowditch-RelativelyHyperbolic}}]
\label{def:rel_hyp}
A group $G$ is \define{hyperbolic relative} to a collection $\{H_1, \ldots, H_l\}$ of finitely generated subgroups if it acts on a connected hyperbolic graph $T$ with only finitely many orbits of edges such that:
\begin{itemize}
\item  each edge has trivial stabilizer and is contained in only finitely many circuits of length $n$ for each $n\in \mathbb{N}$, and
\item infinite vertex stabilizers are exactly the conjugates of the subgroups $H_i$.
\end{itemize}
\end{definition}

\begin{remark}
\label{rem:bowditch_tree_qi_coned-off_Cayley}
In \Cref{def:rel_hyp} it is not hard to see that $T$ is $G$--equivariantly quasi-isometric to the Coned-off Cayley graph $\widehat{\Gamma}$. Indeed, the action gives an equivariant, Lipschitz orbit map $\Gamma\to T$ which can be extended to $\widehat{\Gamma}$ be sending each coset vertex $gH_i$ to the unique vertex of $T$ with stabilizer $gH_i g^{-1}$. A quasi-isometric inverse is obtained by sending a vertex of $T$ with stabilizer $gH_i g^{-1}$ back to $gH_i$.
\end{remark}

	\subsection{The curve graph} \label{ssub:Surfaces_complexes}
	For the purposes of this paper a \emph{surface} is an orientable $2$-dimensional manifold with compact boundary and finitely generated fundamental group. Connected surfaces are classified by the triple $(g,n,b)$, where $g$ is the genus of the surface, $n$ is the number of punctures, and $b$ is the number of boundary components. The \emph{complexity} of a surface is $\xi(S)=3g+n+b-3$. A closed curve in a surface $S$ is a continuous map $\mathbb{S}^1 \arr S$. The curve is \emph{simple} if the map is an embedding, and \emph{essential} if it is not homotopic to a point, a puncture, or a  boundary component. We consider curves as equivalent if they differ by free homotopy or precomposition with a (possibly orientation reversing) homeomorphism of $\mathbb{S}^1$. For the remainder of the paper we use the term \emph{curves} to mean the equivalence class of an essential simple closed curve. Curves are said to be \emph{disjoint} if they have disjoint representatives and are said to \define{intersect} otherwise. A \textit{multicurve} is a disjoint collection of distinct curves. 

The \emph{curve graph} $\Cc(S)$ of a connected surface $S$ with $\xi(S) \ge 1$ is the graph with vertices labeled by 
curves in $S$. When $\xi(S)\ge 2$, there is an edge between two vertices if the curves are disjoint.
In the case $\xi(S) = 1$ the edge condition is modified so that vertices span an edge if the curves have representatives intersecting once in the case $g = 1$, or twice in the case $g = 0$.
This modification ensures $\Cc(S)$ is connected for a surface of any genus.

For an annulus, i.e.~ the connected surface $S$ with $g=n=0$ and $b=2$, the curve graph $\Cc(S)$ is defined to have vertices given by homotopy classes, rel endpoints, of embedded arcs $[0,1]\to S$ with endpoints lying in distinct boundary components of $S$. Two vertices are then joined by an edge if the arcs have representatives with disjoint interiors. It is not hard to see that in this case $\Cc(S)$ is connected and quasi-isometric to $\Z$.

For any surface, the curve graph is a metric space with each edge having length 1. 
An essential fact of the curve graph in our setting is hyperbolicity: 

\begin{theorem}[{Masur--Minsky \cite[Theorem 1.1]{MasurMinsky-CurveComplexI}}]
\label{thm:curve-graph-hyp}
If $S$ is an annulus or a connected surface with $\xi(S)\ge 1$, then the curve graph $\Cc(S)$ is Gromov hyperbolic.
\end{theorem}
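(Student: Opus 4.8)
The plan is to establish Gromov hyperbolicity by exhibiting, for each pair of vertices, a uniformly ``thin'' family of connecting paths and then invoking a combinatorial criterion for hyperbolicity. First I would dispose of the degenerate cases directly: when $S$ is an annulus, $\Cc(S)$ is quasi-isometric to $\Z$ and hence trivially hyperbolic, and when $\xi(S)=1$ the graph $\Cc(S)$ is the Farey graph (or a close variant), which is quasi-isometric to a tree/$\Hb^2$ and is hyperbolic by inspection. So I may assume $\xi(S)\ge 2$, where adjacency means disjointness. The core tool is the \emph{guessing geodesics} criterion (Bowditch, Masur--Schleimer, Hamenst\"adt): a connected graph $X$ is hyperbolic, with constant depending only on $M$, provided one can assign to each pair of vertices $u,v$ a connected subgraph $P(u,v)$ containing $u$ and $v$ such that (a) $\diam P(u,v)\le M$ whenever $d(u,v)\le 1$, and (b) for all $u,v,w$ the path $P(u,v)$ lies in the $M$-neighborhood of $P(u,w)\cup P(w,v)$. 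The whole problem then reduces to constructing such a family of paths in a graph quasi-isometric to $\Cc(S)$ and checking (a) and (b) with a uniform $M$.

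For the construction I would use \emph{surgery paths}, most cleanly realized as the \emph{unicorn paths} of Hensel--Przytycki--Webb in the arc graph $\Ac(S)$, which for punctured $S$ is quasi-isometric to $\Cc(S)$ after collapsing arcs to nearby boundary curves. Given essential arcs $a,b$ in minimal position, each intersection point $\pi\in a\cap b$ determines a \emph{unicorn arc} obtained by concatenating the initial subarc of $a$ up to $\pi$ with the terminal subarc of $b$ from $\pi$; discarding the non-embedded or inessential ones and ordering the survivors by inclusion of their $a$-part yields a sequence $a=c_0,c_1,\dots,c_k=b$ in which consecutive arcs are disjoint, hence a genuine edge-path in $\Ac(S)$. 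This path is explicit and combinatorially controlled, depending only on $a$ and $b$.

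Property (a) is straightforward, since unicorn arcs built from two \emph{disjoint} arcs are uniformly close to one another. The crux---and the step I expect to be the main obstacle---is property (b), the thin-triangles estimate for unicorn paths. This demands a delicate combinatorial comparison showing that each unicorn arc on $P(a,b)$ is disjoint from, or uniformly close to, some unicorn arc on $P(a,w)$ or on $P(w,b)$; the argument hinges on analyzing how the constituent subarcs of $a$, $b$, and $w$ overlap, and this is where all of the real work (and the uniform constant, a small explicit number in the Hensel--Przytycki--Webb approach) resides. Once (a) and (b) are in hand, the criterion yields uniform hyperbolicity of $\Ac(S)$, and transporting this across the quasi-isometry $\Ac(S)\simeq\Cc(S)$ proves the theorem for punctured $S$.

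Finally I would handle the closed case by a puncturing reduction: puncture $S$ once to obtain $S'$ and relate $\Cc(S)$ to $\Cc(S')$ via the puncture-forgetting map, which is Lipschitz and admits a coarse section, so hyperbolicity descends from $S'$ to $S$. As an alternative to the entire arc-graph route, the original argument of Masur--Minsky produces the same ``family of thin paths'' input using Teichm\"uller geometry: the systole map $\mathcal{T}(S)\to\Cc(S)$ is coarsely Lipschitz, and the key technical point there---playing the role of (b)---is a \emph{contraction property} for the nearest-point projection of this map along Teichm\"uller geodesics, from which hyperbolicity follows by a Morse-type criterion. That route avoids arcs entirely but front-loads essentially all of the difficulty into the contraction estimate.
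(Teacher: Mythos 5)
This statement is quoted background: the paper gives no proof, citing Masur--Minsky directly, so there is no in-paper argument to match. Your proposal outlines the modern Hensel--Przytycki--Webb route (guessing geodesics plus unicorn paths), which is a genuinely different argument from Masur--Minsky's original Teichm\"uller-geometric proof and buys uniform hyperbolicity constants; you correctly identify where the real content lives (the $1$-slimness of unicorn triangles) and correctly dispose of the annulus and $\xi(S)=1$ cases. As an outline of a known proof this is the right architecture, but two of your reduction steps are wrong as stated.

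First, the arc graph $\Ac(S)$ is \emph{not} quasi-isometric to $\Cc(S)$ for general punctured surfaces. The subcomplex of $\Cc(S)$-distances visible to $\Ac(S)$ is governed by \emph{witness} subsurfaces (subsurfaces met by every essential arc, i.e.\ containing all punctures): for example on $S_{2,1}$, a partial pseudo-Anosov supported on a genus-one witness $W$ containing the puncture moves arcs arbitrarily far in $\Ac(S)$ while fixing $\partial W$, so curve-graph distances stay bounded. What is uniformly quasi-isometric to $\Cc(S)$ is the arc-\emph{and}-curve graph; accordingly, HPW do not transport hyperbolicity across a quasi-isometry but instead push unicorn paths into the curve graph and verify the guessing-geodesics criterion there. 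Second, your closed-surface step --- ``the puncture-forgetting map is Lipschitz and admits a coarse section, so hyperbolicity descends'' --- fails as a general principle: there is a $1$-Lipschitz surjection with a section from a tree onto $\Z^2$ (map the tree of finite lattice paths to endpoints), so one needs the section to be coarsely Lipschitz, and here it is not. Different lifts of the same curve to $\Cc(S_{g,1})$ differ by point-pushing maps, which by Kra's theorem are pseudo-Anosov when the pushing loop fills, so lifts of nearby curves can be arbitrarily far apart in the punctured curve graph. The fix is again to avoid any quasi-isometry claim and verify the thin-paths criterion directly for candidate paths in $\Cc(S)$ of the closed surface (e.g.\ the bicorn paths of Przytycki--Sisto, or HPW's projected unicorn paths with a puncture chosen in the complement of the given curves, which is legitimate since the criterion requires no naturality of the assignment $P(u,v)$). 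With those two steps repaired, your outline does reduce the theorem to the acknowledged slimness lemma.
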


\subsection{Subsurface projections} \label{ssub:Projections}
By a \emph{subsurface} of $S$, we mean a subset $Y\subset S$ that is also a surface. 
We always assume the subsurface $Y$ is connected and \emph{essential}, meaning each of its boundary components is either a boundary component of $S$ or an essential curve in $S$.
We write $\partial Y$ for the multicurve consisting of boundary components of $Y$ that are essential in $S$. 
We shall also assume $Y$ is an annulus or has $\xi(Y)\ge 1$, so that $Y$ has its own curve graph $\Cc(Y)$. 

Given a curve $\gamma$ on $S$, its \emph{projection} to $Y$ is a set $\pi_Y(\gamma)$ of curves in $Y$ defined following \cite{MasurMinsky-CurveComplexII}: Firstly, $\gamma$ and $Y$ are said to be \emph{disjoint} if they have disjoint representatives; in this case $\pi_Y(\gamma) $ is the empty set. Otherwise $\gamma$ and $Y$ are said to \emph{intersect} and we may realize $\gamma$ and $\partial Y$ in minimal position so that $\gamma\cap Y$ is a nonempty collection of curves and proper arcs in $Y$. We now define projection to $Y$ when $Y$ is not an annulus. For each component $\gamma'$ of $\gamma\cap Y$, the boundary $\partial N_\epsilon(\gamma' \cup \partial Y)$ of a sufficiently small regular neighborhood of $\gamma'\cup \partial Y$ yields a disjoint collection of simple (but possibly nonessential) closed curves. 
We then define $\pi_Y(\gamma)$ to be the set of all such curves that are essential in $Y$ and therefore vertices of $\mathcal{C}(Y)$. Since $Y$ is non-annular, $\pi_Y(\gamma)\neq\emptyset$   when $\gamma$ is not disjoint from $Y$.

When $Y$ is an annulus, the definition of subsurface projection requires a bit more care. The issue is that in this case, the two boundary components of $Y$ are isotopic and different choices for representatives of these boundary components (and therefore for $Y$) can alter the isotopy class of the projection of an arc to $\mathcal{C}(Y)$. To rectify this, we equip $S$ with a complete hyperbolic structure and consider the cover $p: S_Y \rightarrow S$
associated to $\pi_1(Y)$. This inherits a hyperbolic structure from $S$ and admits a compactification $\overline{S_{Y}}$ coming from adding the (quotient of the) boundary at infinity of $\tilde{S} = \mathbb{H}^{2}$. The projection $\pi_{Y}(\gamma)$ is then defined to be the compactification of $p^{-1}(\gamma)$ in $\overline{S_{Y}}$: this is a collection of pairwise disjoint arcs in the compact annulus $\overline{S_{Y}}$ and so determines a (possibly empty) diameter $1$ subset of the annular complex $\mathcal{C}(Y)$.

The definition of subsurface projection is extended to arbitrary sets of curves on $S$ by defining $\pi_Y(A)$ to be the union of the projections of all curves in $A$.  The \textit{subsurface distance} between any two subsets $A,B$ of $\Cc(S)$ is then defined to be the diameter of their projections to $Y$: 
\[ d_{Y}(A,B) \colonequals \mbox{diam}_{\mathcal{C}(Y)}\left(\pi_{Y}(A) \cup \pi_{Y}(B)\right). \]
To ease notation, we sometimes use the shorthand $d_Y(W, V) \colonequals d_Y(\partial W,\partial V)$ when $W, V$ are subsurfaces of $S$. We also note the following basic fact from \cite[Lemma 2.2]{MasurMinsky-CurveComplexII}:
\begin{equation}
\label{eqn:multicure_bdd_projection}
d_Y(\alpha,\beta) \le 2\quad\text{ for any subsurface $Y$ and disjoint multicurves $\alpha,\beta$ on $S$}.
\end{equation}

The following \emph{Bounded Geodesic Image Theorem (BGIT)} of Masur and Minsky will play a fundamental role in our arguments:

\begin{theorem}[Masur--Minsky {\cite[Theorem 3.1]{MasurMinsky-CurveComplexII}}] \label{thm:BddGeodesicImage}
There is a constant $\bgit$, depending only on $\xi(S)$, such that for any proper essential subsurface $Y$ of $S$ and any geodesic $g$ in $\Cc(S)$, if $\pi_Y(v)\ne \emptyset$ for each vertex $v$ in $g$, then $\diam_Y(g) \le \bgit$.
\end{theorem}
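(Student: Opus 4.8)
The plan is to argue the contrapositive: assuming every vertex of the geodesic $g=(v_0,\dots,v_n)$ has nonempty projection to $Y$, I will bound $d_Y(v_0,v_n)$ by a constant depending only on $\xi(S)$. The first observation is a per-step bound: consecutive vertices $v_i,v_{i+1}$ are disjoint curves that both meet $Y$, so by \eqref{eqn:multicure_bdd_projection} we have $d_Y(v_i,v_{i+1})\le 2$. Thus the sequence $\pi_Y(v_0),\dots,\pi_Y(v_n)$ is a coarse path in the (hyperbolic, by \Cref{thm:curve-graph-hyp}) graph $\Cc(Y)$ with steps of size at most $2$. It is crucial to note that this immediately yields only the estimate $d_Y(v_0,v_n)\le 2n = 2\,d_S(v_0,v_n)$, which grows with the length of $g$; the entire content of the theorem is that the bound can in fact be taken \emph{uniform}, independent of $n$. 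In other words, the coarse path must fail to make definite progress in $\Cc(Y)$, and some mechanism must force it to stay bounded.

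That mechanism is the hyperbolicity of the ambient graph $\Cc(S)$. The strategy I would pursue is to view $\partial Y$ as a (bounded) subset of $\Cc(S)$ and let $v_m$ be a vertex of $g$ realizing the minimal distance $d_S(g,\partial Y)$. Splitting $g$ at $v_m$ into an incoming segment $v_0,\dots,v_m$ and an outgoing segment $v_m,\dots,v_n$, along each of these the distance to $\partial Y$ is coarsely monotone by hyperbolicity (the Gromov-product estimate \eqref{eqn:gromov-product-to-geod} together with thin triangles). The goal is then to establish the following localization principle: the projection $\pi_Y$ is coarsely constant along any subsegment of $g$ that stays uniformly far from $\partial Y$, so that all the change in $\pi_Y(v_i)$ is concentrated in the bounded window where $g$ comes near $\partial Y$. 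Granting this, $\pi_Y(v_0)$ is coarsely equal to $\pi_Y(v_m)$, which in turn is coarsely equal to $\pi_Y(v_n)$, yielding the desired uniform bound on $d_Y(v_0,v_n)$.

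The main obstacle is precisely this localization lemma --- that a curve lying far from $\partial Y$ in $\Cc(S)$ has a $Y$--projection that is coarsely insensitive to sliding along the geodesic. This is the genuine technical heart of the theorem, and it is where the naive length bound is defeated: making progress in $\Cc(Y)$ requires the curves to cut across $Y$ in increasingly different ways, and one must show that this can happen only while the geodesic lingers near $\partial Y$, whereas a geodesic in a hyperbolic space passes within any fixed radius of the fixed set $\partial Y$ for only a bounded length. In contrapositive form, the statement to extract is that if $d_Y(v_0,v_n)$ exceeds the eventual constant $\bgit$, then $g$ must approach $\partial Y$ so closely that one of its vertices can be isotoped off $Y$, contradicting the hypothesis that every $\pi_Y(v_i)$ is nonempty. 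Controlling this interaction between subsurface projection and $\Cc(S)$--geodesics quantitatively is the delicate step; it is handled in Masur--Minsky's original argument through their analysis of geodesics and hierarchies, and any self-contained treatment must isolate the same relationship between distance to $\partial Y$ and change in $\pi_Y$.
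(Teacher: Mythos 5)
This statement is quoted background in the paper: it is Masur--Minsky's Bounded Geodesic Image Theorem, cited from \cite{MasurMinsky-CurveComplexII}, and the paper gives no proof of it. So the comparison standard is the literature argument (Masur--Minsky's original proof, or Webb's later short proof with effective constants), and against that standard your proposal has a genuine gap that you yourself flag: the ``localization principle'' --- that $\pi_Y$ is coarsely constant along any subsegment of $g$ staying uniformly far from $\partial Y$ --- is asserted rather than proved, and it \emph{is} the theorem. Everything you do prove is routine: the per-edge bound $d_Y(v_i,v_{i+1})\le 2$ from \eqref{eqn:multicure_bdd_projection}, the resulting linear estimate $d_Y(v_0,v_n)\le 2\,d_S(v_0,v_n)$, the choice of a closest vertex $v_m$ to $\partial Y$, and the coarse unimodality of $d_S(\cdot,\partial Y)$ along $g$ from hyperbolicity of $\Cc(S)$. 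None of this defeats the linear bound; the uniform bound requires showing that distance in $\Cc(S)$ from the set of curves disjoint from $Y$ controls the \emph{rate} of change of $\pi_Y$, and hyperbolicity of $\Cc(S)$ alone does not give this --- it requires genuinely topological input about how curves intersect $Y$ (nested curve/train-track arguments in Masur--Minsky, or unicorn-path slimness in Webb's proof). As written, your reduction is circular: BGIT is deduced from a threshold version of BGIT.

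Two smaller imprecisions are worth fixing even at the outline level. First, the relevant ``obstacle set'' is $\mathcal{Y}=\{\text{curves disjoint from }Y\}$, not $\partial Y$ itself: a vertex can lie at distance $1$ from $\partial Y$ in $\Cc(S)$ (e.g., a curve contained in $Y$) and still have nonempty projection, so in the contrapositive your mechanism only yields that $g$ passes \emph{near} $\partial Y$, whereas the theorem's conclusion requires an actual vertex of $g$ with empty projection; closing that distance-$1$ discrepancy is exactly where the combinatorial argument, not coarse geometry, is needed. (Coarsely $\mathcal{Y}\subset N_1(\partial Y)$, so the sets agree up to bounded error, but ``bounded error'' is precisely what cannot be waved away when the target is a vertex with $\pi_Y(v)=\emptyset$.) Second, the appeal to ``a geodesic spends bounded time in any fixed ball'' is fine, but it only helps once the localization lemma is available, so it cannot be used to bootstrap it. In short: correct skeleton, honest identification of the hard step, but the hard step is the whole theorem, and the proposal does not prove it.
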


We will typically use this theorem by applying its converse to compute lower bounds for distances in the curve graph of $S$. The converse essentially states that if the projection of two curves are sufficiently far apart in $\Cc(Y)$, then the geodesic between them in $\Cc(S)$ makes a pit stop near $\partial Y$.

We shall also make use of the following inequality due to Behrstock. We say that subsurfaces $X$ and $Y$  \define{overlap}, denoted $X\pitchfork Y$, if each projection $\pi_X(\partial Y)$ and $\pi_Y(\partial X)$ is nonempty. Note that this is implied by $d_S(X,  Y) \ge 2$.

\begin{theorem}[{Behrstock \cite{Behrstock-AsymptoticGeometry}; see also \cite[Lemma 2.13]{Mangahas-ShortWordpAs}}]
\label{thm:behrstock}
There is a constant $\bsi =10$ so that for any non-overlapping subsurfaces $X,Y,Z$ of $S$ one has
\[ d_{Y}( X,  Z)\ge \bsi \implies \max\{d_X( Y,  Z), d_Z( X, Y)\} < \bsi.\]
\end{theorem}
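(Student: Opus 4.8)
The plan is to deduce Behrstock's inequality from a single symmetric lemma about how one curve meets two overlapping subsurfaces, and to prove that lemma by a minimal-position argument whose only quantitative input is the bounded-projection estimate \eqref{eqn:multicure_bdd_projection}. First, I would note that the two conclusions $d_X(Y,Z)<\bsi$ and $d_Z(X,Y)<\bsi$ are interchanged by the symmetry $X\leftrightarrow Z$ (which preserves the hypothesis $d_Y(X,Z)\ge\bsi$), so it suffices to bound $d_Z(X,Y)$. I would also record two elementary consequences of \eqref{eqn:multicure_bdd_projection}: for any multicurve $M$ and any subsurface $W$ it meets, $\pi_W(M)$ has diameter at most $2$ in $\Cc(W)$, because the components of $M$ are pairwise disjoint; and likewise $\pi_W(c)$ has uniformly bounded diameter for any single curve $c$, since the arcs of $c\cap W$ are pairwise disjoint. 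These bounds mean that passing between a multicurve and one of its components only costs a uniformly bounded additive error in every subsurface distance.

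The heart of the matter is the following symmetric lemma, which I would isolate: there is a constant $b_0$ (one may take $b_0=4$) such that for overlapping subsurfaces $A,B$ and any curve $c$ meeting both, either $d_A(c,\partial B)\le b_0$ or $d_B(c,\partial A)\le b_0$. Granting this, the reduction runs as follows. Pick a component $c$ of $\partial X$ meeting $Z$ (one exists since $X$ overlaps $Z$). If no single component of $\partial X$ meets both $Y$ and $Z$, then some component $c_Y$ meets $Y$ but is disjoint from $Z$, hence from $\partial Z$; since $\partial Z$ also meets $Y$, \eqref{eqn:multicure_bdd_projection} gives $d_Y(c_Y,\partial Z)\le 2$, and the diameter bound then forces $d_Y(X,Z)$ below $\bsi$, contradicting the hypothesis. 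So some component $c$ meets both $Y$ and $Z$, and the lemma applies with $A=Z$, $B=Y$: the alternative $d_Y(c,\partial Z)\le b_0$ again pushes $d_Y(X,Z)$ below $\bsi$ after absorbing the diameter of $\pi_Y(\partial X)$, a contradiction, so we must have $d_Z(c,\partial Y)\le b_0$, whence $d_Z(X,Y)\le d_Z(c,\partial Y)+\diam_Z\pi_Z(\partial X)$ is comfortably below $\bsi$.

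To prove the lemma I would realize $c,\partial A,\partial B$ in pairwise minimal position and reduce it to a purely topological \emph{dichotomy}: either some arc of $c\cap A$ is disjoint from $\partial B$, or some arc of $c\cap B$ is disjoint from $\partial A$. In the first case that arc and the arcs of $\partial B\cap A$ are disjoint in $A$, so \eqref{eqn:multicure_bdd_projection}, together with the diameter bound on $\pi_A(c)$, gives $d_A(c,\partial B)\le b_0$; the second case is symmetric. The dichotomy itself is where the real work lies: assuming to the contrary that every arc of $c\cap A$ crosses $\partial B$ and every arc of $c\cap B$ crosses $\partial A$, I would trace $c$ through the decomposition of $S$ into the pieces $A\setminus B$, $B\setminus A$, $A\cap B$, and their complement, and extract a contradiction from an innermost crossing.

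I expect this last step — the topological dichotomy — to be the main obstacle, since it is a genuinely two-dimensional statement about the intersection pattern of $c$ with $\partial A\cup\partial B$ in minimal position, rather than a formal consequence of projection axioms. A secondary technical point is the annular case: when $A$ or $B$ is an annulus, $\pi_\bullet$ is defined via the annular cover $\overline{S_\bullet}$, so ``arc of $c\cap A$'' and ``disjoint from $\partial B$'' must be reinterpreted in that cover and \eqref{eqn:multicure_bdd_projection} invoked there. Finally, I would check that every projection appearing above is nonempty, so that each subsurface distance in the argument is well defined; this is exactly what the pairwise overlap hypotheses guarantee.
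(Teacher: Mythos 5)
First, a point of comparison: the paper does not prove this statement at all --- it is quoted from Behrstock, with the effective constant $\bsi=10$ coming from \cite[Lemma 2.13]{Mangahas-ShortWordpAs} (Leininger's elementary argument), so the benchmark is that literature proof rather than anything in the text. (Note also that ``non-overlapping'' in the statement is plainly a typo for pairwise overlapping, which is how you correctly read it.) Your outer scaffolding is fine: the $X\leftrightarrow Z$ symmetry, the cost-of-$2$ bookkeeping from \eqref{eqn:multicure_bdd_projection} when passing between a multicurve and a component, and the selection of a single component $c$ of $\partial X$ meeting both $Y$ and $Z$ all work. But observe that your ``symmetric lemma'' --- either $d_A(c,\partial B)\le b_0$ or $d_B(c,\partial A)\le b_0$ --- \emph{is} Behrstock's lemma (with $b_0=10$ it is exactly the contrapositive form), so all of the content has been pushed into it, and the route you propose to it is where the argument breaks.

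The topological dichotomy at the heart of your plan is false. Take $A,B$ overlapping and let $c$ be a geodesic representative (so pairwise minimal position is automatic) whose itinerary cycles $S\setminus(A\cup B)\to A\setminus B\to A\cap B\to B\setminus A\to S\setminus(A\cup B)$, crossing $\partial A$ and $\partial B$ alternately; such free homotopy classes exist whenever $\partial A$ and $\partial B$ cross. Then \emph{every} arc of $c\cap A$ crosses $\partial B$ (once, entering $A\cap B$) and \emph{every} arc of $c\cap B$ crosses $\partial A$, so both alternatives of your dichotomy fail simultaneously --- even though, in this example, both projection distances happen to be small for a different reason (single crossings give uniformly bounded intersection numbers of the surgered curves). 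This shows disjointness is genuinely stronger than what Behrstock's inequality asserts, and indeed the actual proof in \cite[Lemma 2.13]{Mangahas-ShortWordpAs} is \emph{asymmetric}: one assumes $d_Y(\partial X,\mu)>4$ and analyzes how an arc of $\mu\cap X$ must intersect $\partial Y$ (via nested subarcs and surgeries) to conclude $d_X(\partial Y,\mu)<10$; no disjoint arc is ever produced. There is also a constant problem downstream: your reduction needs $b_0+4<10$, i.e.\ $b_0\le 5$, but the literature statement only rules out both distances being large ($\ge 10$ forces the other $\le 4$) and permits, say, both equal to $9$, so a symmetric lemma with $b_0=4$ is strictly stronger than what is known to you here. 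Since you yourself flag the dichotomy as ``the main obstacle'' and leave it unproven, the proposal is incomplete at precisely the step that is, in fact, false; to repair it you should abandon the disjoint-arc dichotomy and run the asymmetric nested-intersection argument of Leininger--Mangahas instead.
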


This has the following important consequence. The basic idea 
of \Cref{cor:projections_persist}(\ref{item:persistence}) below 
is well-known to experts and has appeared in different contexts in the literature; see for example \cite[Lemma 5.2]{Mangahas-ShortWordpAs} or \cite[Lemma 4.6]{BBFS-AcylindricalProjectionComplexes}. As we need a precise formulation that also speaks to accumulated distance in the curve graph (item (\ref{item:dist_at_least_3}) below), we include a full proof. Note that the statement is also true for bi-infinite sequences $\{Y_n\}_{n \in \mathbb{Z}}$.

\begin{corollary}\label{cor:projections_persist}
Let $Y_1,\dots,Y_n$ be subsurfaces of $S$ such that $Y_i\pitchfork Y_{i+1}$ and  $d_{Y_j}(Y_{j-1},Y_{j+1})\ge \bgit+3\bsi$ for all $i,j$. Then:
\begin{enumerate}
\item \label{item:persistence} $Y_1,\dots,Y_n$ pairwise overlap and $d_{Y_j}(Y_i ,Y_k) \ge \bgit+\bsi$ for all $i < j < k$.
\item \label{item:dist_at_least_3}  $d_S(Y_{i}, Y_{\ell}) \ge d_S(Y_{j} , Y_{k})$ whenever $i \le j < k \le \ell$.
\end{enumerate}
In particular, if $d_S(Y_j, Y_{j+1})\ge 3$ for each $j$, then $d_S(Y_1, Y_n)\ge n-1$.
\end{corollary}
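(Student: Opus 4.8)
The plan is to derive the final ``in particular'' clause directly from parts~(\ref{item:persistence}) and~(\ref{item:dist_at_least_3}) together with the Bounded Geodesic Image Theorem (\Cref{thm:BddGeodesicImage}). The idea is to fix a geodesic in $\Cc(S)$ running between the reducing systems of $Y_1$ and $Y_n$, argue that it must ``cross'' every intermediate $\partial Y_i$, and then count the resulting crossings to bound the length of the geodesic from below.

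Concretely, I would choose any components $a$ of $\partial Y_1$ and $b$ of $\partial Y_n$ and let $g$ be a geodesic in $\Cc(S)$ from $a$ to $b$, so that its length is $L:=d_S(a,b)\le d_S(Y_1,Y_n)$, the inequality holding because $a,b\in\partial Y_1\cup\partial Y_n$. It then suffices to show $L\ge n-1$. The crux is the claim that for every index $i$ the geodesic $g$ contains a vertex $v_i$ disjoint from $Y_i$, i.e.\ with $\pi_{Y_i}(v_i)=\emptyset$. For $i=1$ and $i=n$ this is immediate with $v_1=a$ and $v_n=b$. For $1<i<n$, note first that $Y_i\ne S$ since $Y_i\pitchfork Y_{i+1}$, so \Cref{thm:BddGeodesicImage} applies; part~(\ref{item:persistence}) gives $d_{Y_i}(Y_1,Y_n)\ge\bgit+\bsi$, and combining this with the fact that the projection of a multicurve has diameter at most $2$ (from~\eqref{eqn:multicure_bdd_projection}, using $\pi_{Y_i}(a)\subseteq\pi_{Y_i}(\partial Y_1)$ and $\pi_{Y_i}(b)\subseteq\pi_{Y_i}(\partial Y_n)$) yields $d_{Y_i}(a,b)\ge\bgit+\bsi-4>\bgit$ since $\bsi=10$. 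As $a,b$ are vertices of $g$, this forces $\diam_{Y_i}(g)>\bgit$, so the contrapositive of \Cref{thm:BddGeodesicImage} produces a vertex $v_i$ of $g$ with $\pi_{Y_i}(v_i)=\emptyset$, as claimed.

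To finish, I would count the vertices $v_1,\dots,v_n$. By part~(\ref{item:dist_at_least_3}), the hypothesis $d_S(Y_j,Y_{j+1})\ge 3$ upgrades to $d_S(Y_i,Y_k)\ge 3$ for all $i\ne k$. Hence no single curve can be disjoint from two of the subsurfaces at once: if $v_i$ were disjoint from both $Y_i$ and $Y_k$, then $d_S(\partial Y_i,\partial Y_k)\le 2$, contradicting $d_S(Y_i,Y_k)\ge 3$. Therefore $v_1,\dots,v_n$ are pairwise distinct vertices of $g$, occupying $n$ distinct integer positions in $\{0,1,\dots,L\}$, which gives $L\ge n-1$ and thus $d_S(Y_1,Y_n)\ge L\ge n-1$. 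I expect the only delicate point to be the passage in the crossing step from the multicurve projection distance $d_{Y_i}(\partial Y_1,\partial Y_n)$ to the single-curve distance $d_{Y_i}(a,b)$; this is exactly where~\eqref{eqn:multicure_bdd_projection} and the slack from $\bsi=10$ are needed. Pleasantly, the argument never requires the crossing vertices to appear in their ``natural'' order along $g$—mere distinctness suffices for the count.
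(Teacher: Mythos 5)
Your argument for the final ``in particular'' clause is correct, and it is essentially the paper's own closing paragraph: a geodesic between components of $\partial Y_1$ and $\partial Y_n$ must, by the Bounded Geodesic Image Theorem, contain for each intermediate $i$ a vertex $v_i$ with $\pi_{Y_i}(v_i)=\emptyset$, and these vertices (together with the endpoints) are pairwise distinct because a single curve disjoint from both $Y_i$ and $Y_k$ would force $d_S(Y_i,Y_k)\le 2$. Your explicit observation that the order of the $v_i$ along the geodesic is irrelevant, and your implicit use of the $\ge 3$ separation to guarantee $\pi_{Y_i}(a),\pi_{Y_i}(b)\neq\emptyset$, are both fine. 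The problem is that this clause is only the coda of the corollary. Parts (\ref{item:persistence}) and (\ref{item:dist_at_least_3}) are \emph{conclusions} of the statement, not hypotheses, and your proof invokes both as black boxes: (\ref{item:persistence}) to obtain $d_{Y_i}(Y_1,Y_n)\ge \bgit+\bsi$ before applying \Cref{thm:BddGeodesicImage}, and (\ref{item:dist_at_least_3}) to upgrade the adjacent bound $d_S(Y_j,Y_{j+1})\ge 3$ to $d_S(Y_i,Y_k)\ge 3$ for all $i\ne k$, which is exactly what makes your crossing vertices distinct. As written, the proposal proves none of the corollary's two enumerated claims, which are its main content.

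Concretely, what is missing is the following. Part (\ref{item:persistence}) is proved in the paper by induction on $n$: the inductively known bound $d_{Y_{j-1}}(Y_i,Y_j)\ge\bgit+\bsi\ge\bsi$ feeds into the Behrstock inequality (\Cref{thm:behrstock}) to give $d_{Y_j}(Y_i,Y_{j-1})\le\bsi$, and symmetrically $d_{Y_j}(Y_{j+1},Y_k)\le\bsi$, so the triangle inequality yields $d_{Y_j}(Y_i,Y_k)\ge\bgit+3\bsi-2\bsi=\bgit+\bsi$; pairwise overlap then follows since non-overlapping $Y_i,Y_k$ would force $d_{Y_j}(Y_i,Y_k)\le 2$ by \eqref{eqn:multicure_bdd_projection}. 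Part (\ref{item:dist_at_least_3}) needs a separate and more delicate argument: one chooses components $\alpha\in\partial Y_j$, $\beta\in\partial Y_k$ realizing $d_S(Y_j,Y_k)$ and components $\alpha'\in\partial Y_i$, $\beta'\in\partial Y_\ell$ projecting to $Y_j$ and $Y_k$, checks via part (\ref{item:persistence}) that $\beta'$ cuts $\alpha$, and then applies \Cref{thm:BddGeodesicImage} twice on $[\alpha',\beta']$ to produce interior vertices $\alpha_0,\beta_0$ disjoint from $\alpha$ and $\beta$, giving $d_S(\alpha',\beta')\ge 2+d_S(\alpha_0,\beta_0)\ge d_S(\alpha,\beta)$. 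Neither step is routine, and since your final-clause argument depends on both, the proposal cannot stand on its own until they are supplied.
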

\begin{proof}
The proof of claim (\ref{item:persistence}) is morally the same as for \Cref{lem:local-to-global}. 
We induct on $n$ assuming the statement holds for a sequence of $n-1$ subsurfaces, with cases $n\le 2$ being trivial.
Given  $Y_1, \ldots Y_n$ and indices $i < j < k$, the subsequences $Y_i,\dots, Y_j$ and $Y_j, \dots, Y_k$ satisfy the  induction hypothesis and hence, the conclusion. 

We claim that $d_{Y_j}(Y_i, Y_{j-1}) \le \bsi$: Indeed, if $i=j-1$ this follows trivially from
\Cref{eqn:multicure_bdd_projection}, and if $i < j-1$ then induction ensures $d_{Y_{j-1}}(Y_i, Y_j)\ge \bsi$ so that the claim follows from \Cref{thm:behrstock}. Similarly we could also show that $d_{Y_j}(Y_{j+1},Y_k)\le \bsi$. 

From that we can conclude the desired lower bound
\begin{align*}
	d_{Y_j}(Y_i, Y_k) &\ge d_{Y_j}(Y_{j-1}, Y_{j+1}) - d_{Y_j}(Y_{j-1}, Y_{i})- d_{Y_j}(Y_{k}, Y_{j+1})\\
	&\ge d_{Y_j}(Y_{j-1}, Y_{j+1})- 2\bsi \\
	&\ge \bgit+\bsi.
\end{align*}
Note this implies $d_S(Y_i, Y_k)\ge 2$ and thus $Y_i\pitchfork Y_k$, since otherwise (\ref{eqn:multicure_bdd_projection}) would imply $d_{Y_j}(Y_i,Y_k)\le 2<\bgit+\bsi$. Hence $Y_1,\dots, Y_n$ pairwise overlap.

For the proof of claim (\ref{item:dist_at_least_3}) it suffices to consider the case $d_S(Y_{j}, Y_k)\ge 3$, since, if $d_S(Y_j, Y_k)=2$, the above already shows $d_{S}(Y_{i}, Y_{\ell})\ge 2$. Consider indices $i \le j < k \le \ell$ and choose components $\alpha\in \partial Y_{j}$ and $\beta\in \partial Y_{k}$ with $d_S(\alpha,\beta) = d_S(Y_{j},Y_{k})$.  If $i = j$ we set $\alpha' = \alpha$, and if $i < j$ then we may choose a component $\alpha'\in \partial Y_{i}$ that projects to $Y_{j}$. Similarly choose $\beta' = \beta$ if $k= \ell$ and otherwise choose $\beta'\in \partial Y_{\ell}$ projecting to $Y_{k}$. We first claim that $\beta'$ intersects $\alpha$ and hence projects to $Y_j$. If $\beta'=\beta$ this is clear, and otherwise $\alpha,\beta'$ both project to $Y_{k}$ and hence by claim (\ref{item:persistence}) and (\ref{eqn:multicure_bdd_projection}) we have
\begin{align*}
	d_{Y_{k}}(\alpha, \beta') &\ge d_{Y_{k}}(Y_{j}, Y_{\ell}) - d_{Y_{k}}(Y_{j}, \alpha)- d_{Y_{k}}(Y_{\ell}, \beta')\\
	&\ge d_{Y_{k}}(Y_{j}, Y_{\ell}) -4\\
	&\ge \bgit + \bsi - 4.
\end{align*}
which, again by (\ref{eqn:multicure_bdd_projection}), is incompatible with $\alpha',\beta$ being disjoint. 

We next claim the geodesic $[\alpha',\beta']$ in $\Cc(S)$ passes through a curve $\alpha_0\notin\{\alpha',\beta'\}$ disjoint from $\alpha$. Indeed, if $\alpha' = \alpha$ we simply take $\alpha_0$ to be the next vertex along the geodesic (this is valid since we have seen $d_S(\alpha,\beta')\ge 2$). Otherwise, by claim (\ref{item:persistence}) and (\ref{eqn:multicure_bdd_projection}), we have
\[d_{Y_j}(\alpha', \beta') \ge d_{Y_j}(Y_{i},Y_{\ell})-4 \ge \bgit+\bsi-4 > \bgit\]
so that \Cref{thm:BddGeodesicImage} provides a curve $\alpha_0\notin\{\alpha',\beta'\}$ that fails to project to $Y_{j}$ and so is disjoint from $\alpha$. Symmetrically, $[\alpha',\beta']$ contains a curve $\beta_0\notin \{\alpha',\beta'\}$ disjoint from $\beta$. Therefore we conclude:
\[d_S(Y_{i},Y_{\ell}) \ge d_S(\alpha',\beta') \ge 2 + d_S(\alpha_0,\beta_0)\ge d_S(\alpha,\beta) = d_S(Y_j, Y_k),\]
where the second inequality follows since $\alpha_0$, $\beta_0$ are interior vertices of $[\alpha' , \beta']$.

Finally, suppose $d_S(Y_{j},Y_{j+1})\ge 3$ for each $j$ and consider a geodesic $\gamma$ realizing $d_S(Y_1,Y_n)$. Since for each $1 < j < n$ we have $d_{Y_j}(Y_1, Y_n)\ge \bgit+\bsi$, \Cref{thm:BddGeodesicImage} ensures the geodesic passes through a curve $\gamma_j$ disjoint from $Y_j$.  These curves are necessarily distinct, since $\gamma_i=\gamma_k$ would imply $d_S(Y_i, Y_k)\leq 2$ in violation of (\ref{item:dist_at_least_3}).  Hence the geodesic passes through the $n-2$ distinct vertices $\gamma_2,\dots,\gamma_{n-2}$ and so has length at least $n-1$.
\end{proof}

We shall also need a variation on this that allows for disjoint subsurfaces within the collection.
Given a list of subsurfaces $Y_1,\dots, Y_n$ of $S$, let:
\begin{itemize}
	\item $\iota(j)$ denote the largest index less than $j$ so that $Y_{\iota(j)} \pitchfork Y_j$, and
	\item $\tau(j)$ the smallest index greater than $j$ so that $Y_j\pitchfork Y_{\tau(j)}$.
\end{itemize}
Note that for a given $j$, its predecessor $\iota(j)$ or successor $\tau(j)$ may not exist.

\begin{corollary}\label{cor:general_projections_persist}
Assume $Y_1,\dots, Y_n$ satisfies the property that $d_{Y_j}(Y_{\iota(j)},Y_{\tau(j)})\ge \bgit+6\bsi$ for all $j \in \{1, \dots, m\}$ such that $\iota(j)$ and $\tau(j)$ both exist. Then for any subsequence $Y_{\sigma(1)},\dots, Y_{\sigma(m)}$ satisfying $Y_{\sigma(j)}\pitchfork Y_{\sigma(j+1)}$ for each $j$ we have:
\[
d_{Y_{\sigma(j)}}(Y_{\sigma(i)}, Y_{\sigma(k)}) \ge \bgit+3\bsi\qquad\text{whenever $\sigma(i) < \sigma(j) < \sigma(k)$}.
\]
\end{corollary}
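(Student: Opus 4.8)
The plan is to reduce the statement to a single two-sided persistence estimate and then feed it into \Cref{cor:projections_persist}. Concretely, I would first isolate the following key claim: \emph{for any indices $a<b<c$ with $Y_a\pitchfork Y_b$ and $Y_b\pitchfork Y_c$, one has $d_{Y_b}(Y_a,Y_c)\ge \bgit+4\bsi$.} Granting this, the corollary follows quickly. Since the subsequence $Y_{\sigma(1)},\dots,Y_{\sigma(m)}$ has overlapping consecutive terms by hypothesis, applying the key claim to each consecutive triple $Y_{\sigma(j-1)},Y_{\sigma(j)},Y_{\sigma(j+1)}$ gives $d_{Y_{\sigma(j)}}(Y_{\sigma(j-1)},Y_{\sigma(j+1)})\ge \bgit+3\bsi$, which is exactly the hypothesis needed to invoke \Cref{cor:projections_persist}(\ref{item:persistence}) for this subsequence. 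That result then guarantees the subsequence overlaps pairwise, so that for arbitrary $\sigma(i)<\sigma(j)<\sigma(k)$ we have both $Y_{\sigma(i)}\pitchfork Y_{\sigma(j)}$ and $Y_{\sigma(j)}\pitchfork Y_{\sigma(k)}$; a final application of the key claim yields $d_{Y_{\sigma(j)}}(Y_{\sigma(i)},Y_{\sigma(k)})\ge \bgit+4\bsi\ge\bgit+3\bsi$, as desired.

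To prove the key claim I would observe that $a<b$ together with $Y_a\pitchfork Y_b$ forces $a\le\iota(b)$, and likewise $c\ge\tau(b)$; the standing hypothesis $d_{Y_b}(Y_{\iota(b)},Y_{\tau(b)})\ge \bgit+6\bsi$ then reduces everything, via the reverse triangle inequality, to the two symmetric estimates
\[
d_{Y_b}(Y_a,Y_{\iota(b)})\le \bsi
\qquad\text{and}\qquad
d_{Y_b}(Y_c,Y_{\tau(b)})\le \bsi,
\]
since these give $d_{Y_b}(Y_a,Y_c)\ge (\bgit+6\bsi)-\bsi-\bsi=\bgit+4\bsi$. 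I would prove the two estimates simultaneously by strong induction on the index gap ($b-a$ for the first, $c-b$ for the second). For the first estimate: if $a=\iota(b)$ it is trivial; if $Y_a$ and $Y_{\iota(b)}$ do \emph{not} overlap, then $\partial Y_a$ and $\partial Y_{\iota(b)}$ are disjoint multicurves and \Cref{eqn:multicure_bdd_projection} gives $d_{Y_b}(Y_a,Y_{\iota(b)})\le 2\le \bsi$. In the remaining case $Y_a\pitchfork Y_{\iota(b)}$, I would apply Behrstock's inequality (\Cref{thm:behrstock}) to the pairwise-overlapping triple $Y_a,Y_{\iota(b)},Y_b$: it suffices to show $d_{Y_{\iota(b)}}(Y_a,Y_b)\ge \bsi$, for then the inequality forces $d_{Y_b}(Y_a,Y_{\iota(b)})<\bsi$. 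But now $a\le\iota(\iota(b))$ and $b\ge\tau(\iota(b))$ (both existing in this case), so invoking the hypothesis at index $\iota(b)$ together with the two inductive estimates of strictly smaller gap, namely $d_{Y_{\iota(b)}}(Y_a,Y_{\iota(\iota(b))})\le\bsi$ (gap $\iota(b)-a<b-a$) and $d_{Y_{\iota(b)}}(Y_b,Y_{\tau(\iota(b))})\le\bsi$ (gap $b-\iota(b)<b-a$), yields $d_{Y_{\iota(b)}}(Y_a,Y_b)\ge (\bgit+6\bsi)-2\bsi=\bgit+4\bsi\ge\bsi$. The second estimate is proved by the symmetric argument.

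The main obstacle, and the step demanding the most care, is this simultaneous induction. One must verify that in the overlap case every appeal to the inductive hypothesis genuinely has strictly smaller index gap (it does, as $\iota(b)<b$ and $a<\iota(b)$), that $\iota(\iota(b))$ and $\tau(\iota(b))$ actually exist so the hypothesis $d_{Y_{\iota(b)}}(Y_{\iota(\iota(b))},Y_{\tau(\iota(b))})\ge\bgit+6\bsi$ is available, and that each triple fed to \Cref{thm:behrstock} really does overlap pairwise. The extra slack in the constant $\bgit+6\bsi$ (versus the $\bgit+3\bsi$ of \Cref{cor:projections_persist}) is precisely what is consumed by the two $\bsi$-error terms absorbed at each stage of the recursion, and keeping this bookkeeping consistent is the crux of the proof; once it is in place, the passage back to \Cref{cor:projections_persist} in the first paragraph is routine.
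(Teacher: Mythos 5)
Your proposal is correct, and the delicate points all check out: in the only case requiring the inductive hypothesis one has $a<\iota(b)<b$, so both appeals genuinely drop the gap; $\iota(\iota(b))$ and $\tau(\iota(b))$ exist exactly as you argue; and each triple fed to \Cref{thm:behrstock} pairwise overlaps (the paper's statement of that theorem says ``non-overlapping,'' an evident typo for pairwise overlapping, and your usage matches how the paper itself applies it). One pedantic correction: the ``trivial'' case $a=\iota(b)$ gives $d_{Y_b}(Y_a,Y_{\iota(b)})=\diam_{\Cc(Y_b)}(\pi_{Y_b}(\partial Y_a))\le 2$ rather than $0$, but since $2\le\bsi$ nothing breaks. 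Note also that your appeal to the standing hypothesis at $j=\iota(b)$ for arbitrary indices tacitly reads the hypothesis as holding at every index of the original sequence where $\iota$ and $\tau$ both exist; that is the intended reading (the range ``$j\in\{1,\dots,m\}$'' in the statement should be $\{1,\dots,n\}$), and the paper's own proof uses it the same way. Where you genuinely diverge from the paper is the engine behind the two one-sided estimates. The paper makes the same reduction, proving $d_{Y_{\sigma(2)}}(Y_{\sigma(1)},Y_{\iota(\sigma(2))})\le\bsi+2$ and $d_{Y_{\sigma(2)}}(Y_{\tau(\sigma(2))},Y_{\sigma(3)})\le\bsi+2$, but does so non-inductively: it builds the maximal successor chain $a_1=\sigma(2)$, $a_{i+1}=\tau(a_i)$ terminating at a surface disjoint from $Y_{\sigma(3)}$, verifies this chain satisfies the hypotheses of \Cref{cor:projections_persist} (the key observation being that $Y_{a_{j-1}}$ and $Y_{\iota(a_j)}$ are automatically disjoint, so the standing hypothesis transfers at a cost of only $2$), and then uses \Cref{cor:projections_persist} plus a single Behrstock flip; the general case is then an induction on $m$. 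Your simultaneous strong induction on the index gap replaces the chain construction and the inner appeal to \Cref{cor:projections_persist} with one Behrstock application per level of the recursion, consuming the same $2\bsi$ of slack in $\bgit+6\bsi$, and your assembly---consecutive triples, then \Cref{cor:projections_persist}(\ref{item:persistence}) once for pairwise overlap, then the key claim again---avoids the induction on $m$ entirely. The paper's version buys reuse of already-proven machinery and less inductive bookkeeping; yours buys a self-contained local argument with a marginally sharper intermediate constant ($\bgit+4\bsi$ versus the paper's $\bgit+4\bsi-4$) and no auxiliary chain.
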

\begin{remark}
\label{rem:general-to-special-persitence}
It follows that the sequence $W_1,\dots, W_m$, where $W_i = Y_{\sigma(i)}$, satisfies the hypotheses of \Cref{cor:projections_persist} and thus also all of its conclusions.
\end{remark}

\begin{proof}
	We first prove the special case of a subsequence $Y_{\sigma(1)}, Y_{\sigma(2)}, Y_{\sigma(3)}$ of length $m =3$. Note that $Y_{\sigma(1)}\pitchfork Y_{\sigma(2)}\pitchfork Y_{\sigma(3)}$ implies $\iota(\sigma(2))$ and $\tau(\sigma(2))$ both exist. Hence by our hypothesis and the triangle inequality it suffices to show
	\[d_{Y_{\sigma(2)}}(Y_{\tau(\sigma(2))}, Y_{\sigma(3)})\le \bsi+2
	\quad\text{and}\quad
	d_{Y_{\sigma(2)}}(Y_{\sigma(1)}, Y_{\iota(\sigma(2))}) \le \bsi+2.\]
	We prove the first of these inequalities, the other being analogous.
	Set $a_1 = \sigma(2)$, and for $k\ge 1$ suppose we have constructed a sequence $\sigma(2) = a_1 < \dots <a_k \le \sigma(3)$ so that $a_{i+1}= \tau(a_i)$ for each $1 \le i < k$. If $Y_{a_k}\pitchfork Y_{\sigma(3)}$, then necessarily $\tau(a_k)\le \sigma(3)$ and we may set $a_{k+1} = \tau(a_k)$ to get a longer sequence $a_1 < \dots < a_{k+1}$ satisfying the same condition. We continue recursively in this manner until we obtain a maximal sequence for which $Y_{a_k}$ and $Y_{\sigma(3)}$ do not overlap. Thus $d_{Y_{\sigma(2)}}(Y_{a_k}, Y_{\sigma(3)}) \le 2$ by (\ref{eqn:multicure_bdd_projection}). Since $a_1 = \sigma(2)$ and $a_2 = \tau(\sigma(2))$, it now suffices to show $d_{Y_{a_1}}(Y_{a_2}, Y_{a_k})\le \bsi$.
	
	We claim the sequence $Y_{a_1}, \dots, Y_{a_k}$ satisfies the hypotheses of \Cref{cor:projections_persist}. Indeed, the fact $a_{j} = \tau(a_{j-1})$ ensures that $Y_{a_{j-1}}\pitchfork Y_{a_j}$ for each $1<j \le k$. This further ensures $a_{j-1} \le \iota(a_j)$. Hence, since $\tau(a_{j-1}) = a_{j} > \iota(a_{j})$ is the first index overlapping with $Y_{a_{j-1}}$, it must be that $Y_{a_{j-1}}$ and $Y_{\iota(a_{j})}$ do not overlap. Therefore their boundaries are disjoint and we conclude  $d_{Y_{a_{j}}} (Y_{a_{j-1}}, Y_{\iota(a_{j})}) \le 2$ by (\ref{eqn:multicure_bdd_projection}). Since $a_{j+1} = \tau(a_{j})$, when $j < k$, it now follows from the triangle inequality that
	\[d_{Y_{a_{j}}}(Y_{a_{j-1}}, Y_{a_{j+1}}) \ge d_{Y_{a_{j}}}(Y_{\iota(a_{j})}, Y_{\tau(a_{j})}) - 2 \ge \bgit + 3\bsi\]
	as required. Therefore \Cref{cor:projections_persist} applies to $Y_{a_1},\dots Y_{a_{k}}$. From this, we easily obtain the desired bound $d_{Y_{a_1}}(Y_{a_2}, Y_{a_k})\le \bsi$: Indeed, if $k = 2$ this is immediate, and if $k > 2$ the bound follows from \Cref{thm:behrstock} and the conclusion $d_{Y_{a_2}}(Y_{a_1}, Y_{k}) > \bsi$ of \Cref{cor:projections_persist}. This completes the proof when $m=3$.
	
	We next prove the general case by inducting on $m$. The cases $m\le 2$ are vacuous, so assume $m\ge 3$ and fix $i < j < k$. By induction the shorter sequences $Y_{\sigma(1)}, \dots, Y_{\sigma(j)}$ and $Y_{\sigma(j)},\dots Y_{\sigma(k)}$ satisfy the conclusion. Therefore \Cref{cor:projections_persist}(\ref{item:persistence}) (c.f.~\Cref{rem:general-to-special-persitence}) implies  $Y_{\sigma(i)}\pitchfork Y_{\sigma(j)}\pitchfork Y_{\sigma(k)}$. Hence the length 3 case implies the desired bound $d_{Y_{\sigma(j)}}(Y_{\sigma(i)},Y_{\sigma(k)}) \ge \bgit+3\bsi$.  
\end{proof}

\subsection{The mapping class group} \label{ssub:MCG_Reducibles}
Recall that $S$ is a connected orientable surface with empty boundary and finite complexity $\xi(S) \ge 1$.
We use $\Mod(S)$ to denote the mapping class group of a surface $S$, defined by 
\[\Mod(S)=\Homeo^+(S)/\Homeo_0(S),\] 
where $\Homeo^+(S)$ denotes the group of orientation preserving homeomorphisms of $S$ and $\Homeo_0(S)$ is the normal subgroup consisting of homeomorphisms isotopic to the identity map. 

Recall that an element $f\in \Mod(S)$
is \emph{periodic} if it has finite order in $\Mod(S)$, 
is \emph{reducible} if there exists a multicurve $\alpha$ so that $f(\alpha) = \alpha$,
and is \emph{pseudo-Anosov} if there is a number $\lambda > 1$ and a transverse pair of singular measured foliations $\mathcal{F}_{\pm}$ on $S$ so that $f(\mathcal{F}_\pm) = \lambda^{\pm1}\mathcal{F}_\pm$. The Nielsen--Thurston classification says that every element $f$ of $\Mod(S)$ is either periodic, infinite-order reducible, or pseudo-Anosov; see \cite[Chapter 13]{FarbMargalit-Primer}.
The prototypical example of a non-periodic reducible element is the \emph{Dehn twist} $T_\alpha$ about a curve $\alpha$, defined by cutting $S$ along $\alpha$ and then regluing with a full twist.

A mapping class $f\in \Mod(S)$ is said to be \emph{supported} on a subsurface $Y$ if it has a representative that restricts to the identity in the complement of $Y$. We moreover say $f$ is \emph{fully supported} on $Y$ if the restriction $f\vert_Y$ is a pseudo-Anosov element of $\Mod(Y)$ or if $Y$ is an annulus with $f\vert_Y$ nontrivial. In the former case we call $f$ a \emph{partial pseudo-Anosov} with support $Y$ and in the latter case a \emph{twist} about the core curve of the annulus. Thus any twist is simply a nontrivial power of a Dehn twist.

An element $g\in \Mod(S)$ is said to be \emph{pure} or in \emph{normal form} if it can be written as a product $g = f_1\dots f_k$ where each $f_i$ is fully supported on some subsurface $Y_i$ and these supporting subsurfaces have pairwise disjoint representatives (recall that in our formulation, subsurfaces are necessarily connected and non-empty).
In this case, a supporting subsurface $Y_i$ is called a \define{domain of $g$} 
if $f_i$ is a partial pseudo-Anosov on $Y_i$ or if $f_i$ is a twist and the annulus $Y_i$ is not homotopic into the support $Y_j$ for any partial pseudo-Anosov factor $f_j$.
 It is a fact that there is a uniform number $N\ge 1$ depending only on $S$ so that $f^N$ is in normal form for any element $f\in \Mod(S)$ \cite{Ivanov-SubgroupsTeichmullerModularGroups,BirmanLubotzkyMcCarthy}.

It is a crucial result of Masur and Minsky that pseudo-Anosov mapping classes act loxodromically on the curve graph, and in fact with a uniform lower bound on the asymptotic translation length \cite[Proposition 3.6]{MasurMinsky-CurveComplexI}. It is also easy to see that twists acts with translation length at least $1$ on the curve graph of the associated annulus. These facts lead the following consequence for pure mapping classes:

\begin{corollary}[{\cite[Corollary 2.11]{Mangahas-ShortWordpAs}}]
\label{thm:minimal_translation_mangahas_cor}
There exists a constant $c=c(S)>0$ such that for any pure element $g\in \Mod(S)$, any domain $Y$ of $g$, and any curve $\gamma\in\mathcal C(S)$ with nontrivial projection onto $Y$, we have 
\[d_Y(g^n\gamma, \gamma)\geq c\abs{n} \qquad\text{for all nontrivial $n\in\mathbb Z$}.\]
\end{corollary}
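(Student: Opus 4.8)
The plan is to reduce the statement about a pure mapping class $g$ to the two known translation-length facts cited just before the corollary: that a partial pseudo-Anosov on a subsurface $Y$ acts with a uniform minimal translation length on $\Cc(Y)$ (Masur--Minsky \cite[Proposition 3.6]{MasurMinsky-CurveComplexI}), and that a twist acts with translation length at least $1$ on the annular curve graph. Write $g = f_1\cdots f_k$ in normal form, with each $f_i$ fully supported on a subsurface $Y_i$, the $Y_i$ having pairwise disjoint representatives, and let $Y$ be a fixed domain of $g$. The key structural observation is that because the supporting subsurfaces are disjoint, the restriction of $g$ to the curve graph $\Cc(Y)$ is governed \emph{only} by the factor $f_i$ whose support $Y_i = Y$.

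\medskip

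First I would pin down how the projection $\pi_Y$ interacts with the action of $g$. The crucial point is that if $i$ is the index with $Y_i = Y$, then for any curve $\gamma$ with $\pi_Y(\gamma)\neq\emptyset$ one has $\pi_Y(g^n\gamma) = \pi_Y(f_i^n\gamma)$ up to uniformly bounded error, because every other factor $f_j$ ($j\neq i$) is supported on a subsurface $Y_j$ disjoint from $Y$ and hence acts trivially, up to bounded distance, on $\pi_Y$. Concretely, a homeomorphism supported away from $Y$ fixes $Y$ and $\partial Y$ and so moves $\pi_Y(\gamma)$ by a uniformly bounded amount in $\Cc(Y)$; the factors commute in the sense that their supports are disjoint, so the composite effect on $\pi_Y$ is that of $f_i$ alone. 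Thus I would establish
\[
d_Y(g^n\gamma, f_i^n\gamma) \le C
\]
for a constant $C$ depending only on $S$ (coming from the number of factors and the diameter-$\le 2$ bounds of \Cref{eqn:multicure_bdd_projection}).

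\medskip

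Second, I would apply the appropriate translation-length fact to the single factor $f_i$ on $\Cc(Y)$. If $Y$ is a non-annular domain, then $f_i\vert_Y$ is pseudo-Anosov and Masur--Minsky gives a uniform $c_0 = c_0(S) > 0$ with $d_Y(f_i^n\gamma,\gamma)\ge c_0\abs{n}$; if $Y$ is an annular domain, then $f_i$ is a nontrivial twist and $d_Y(f_i^n\gamma,\gamma)\ge \abs{n}$. In either case $d_Y(f_i^n\gamma,\gamma)\ge c_0\abs{n}$ for a uniform $c_0$. Combining with the previous step via the triangle inequality gives
\[
d_Y(g^n\gamma,\gamma)\ge d_Y(f_i^n\gamma,\gamma) - 2C \ge c_0\abs{n} - 2C.
\]
This is linear in $\abs{n}$ but with an additive defect, so the final step is to absorb the constant: choose $c = c(S)$ small enough that $c_0\abs{n} - 2C \ge c\abs{n}$ for all $\abs{n}$ large, and then shrink $c$ further (using that $d_Y(g^n\gamma,\gamma)\ge 1 > 0$ for every nontrivial $n$, since $g^n$ does not fix the projection) to cover the finitely many small values of $\abs{n}$.

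\medskip

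I expect the main obstacle to be rigorously justifying the first step — that the disjointly-supported factors $f_j$ ($j\ne i$) perturb $\pi_Y$ only by a bounded amount, uniformly in $n$. The subtlety is that one must control the projection of $g^n\gamma$, not just of a single image, and one must handle the annular case of $\Cc(Y)$ where the projection is defined via the annular cover and is genuinely sensitive to the $Y$-component of the twisting. The cleanest route is to note that $g^n$ and $f_i^n$ differ by an element supported on $\bigsqcup_{j\neq i} Y_j$, a subsurface disjoint from $Y$; such an element fixes $\partial Y$ and, because it does not intersect $Y$, moves any projected curve to an adjacent (distance $\le 2$) one in $\Cc(Y)$ by \Cref{eqn:multicure_bdd_projection}, independently of $n$. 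That this bound is uniform in $n$ is exactly what makes the linear lower bound survive.
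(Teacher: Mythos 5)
The paper offers no proof of this statement: it is quoted directly from Mangahas \cite[Corollary 2.11]{Mangahas-ShortWordpAs}, with the surrounding text indicating the same two ingredients you use (Masur--Minsky's uniform translation bound for pseudo-Anosovs on $\Cc(Y)$, and translation length at least $1$ for twists on annular graphs). Your architecture matches that outline, and your non-annular case is fine --- indeed there you can do better than a bounded error: a representative of $h = \prod_{j\ne i} f_j$ is the identity on a neighborhood of $Y$, so $\pi_Y(h^n\gamma) = \pi_Y(\gamma)$ exactly. But your first step has a genuine gap in the annular case, and it sits exactly where you predicted the difficulty would be. Your proposed justification --- that $h^n$ ``moves any projected curve to an adjacent (distance $\le 2$) one in $\Cc(Y)$ by \eqref{eqn:multicure_bdd_projection}'' --- misapplies that inequality: \eqref{eqn:multicure_bdd_projection} bounds $d_Y$ for a pair of \emph{disjoint} multicurves, whereas $h^n\gamma$ and $\gamma$ are typically far from disjoint (e.g.\ when some $f_j$ is a partial pseudo-Anosov whose support $\gamma$ crosses), so it says nothing about $d_Y(h^n\gamma,\gamma)$.

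More seriously, the claim you need is \emph{false} if one only assumes, as you do, that the supports $Y_j$ ($j\ne i$) are disjoint from $Y$. If $Y$ is an annulus whose core $\alpha$ is parallel to a boundary component of the support $Z$ of a partial pseudo-Anosov factor, then that factor twists curves about $\alpha$ \emph{linearly} in $n$ (the fractional Dehn twist coefficient phenomenon): in the annular cover associated to $\alpha$, one lift of $Z$ is an essential annulus encircling the core, and rerouting strands of $\gamma$ through it changes winding unboundedly. Concretely, for $g = T_\alpha f$ with $f$ a partial pseudo-Anosov whose boundary twisting at $\alpha$ is $-1$, the quantity $d_\alpha(g^n\gamma,\gamma)$ stays bounded, so no linear lower bound holds at the annulus about $\alpha$. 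The definition of \emph{domain} in \Cref{ssub:MCG_Reducibles} excludes precisely this configuration (``the annulus $Y_i$ is not homotopic into the support $Y_j$ for any partial pseudo-Anosov factor''), and a correct proof must invoke it: when $\alpha$ is disjoint from each $Y_j$ and not parallel into any of them, no conjugate of $\alpha$ lies in $\pi_1(Y_j)$, so every lift of $Y_j$ to the annular cover is simply connected and does not encircle the core; rewiring lifted arcs inside such regions preserves their rel-endpoints homotopy classes, which is what makes $\pi_Y(h^n\gamma)$ uniformly close to $\pi_Y(\gamma)$ independently of $n$. Your write-up never uses this hypothesis, so the key estimate $d_Y(g^n\gamma, f_i^n\gamma)\le C$ is unsupported exactly in the case that distinguishes a domain from an arbitrary annular support. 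A smaller point: your final absorption step leans on ``$d_Y(g^n\gamma,\gamma)\ge 1$ since $g^n$ does not fix the projection,'' which for small $n$ is essentially the assertion being proved; this is avoidable, since the subadditivity trick $d_Y(g^n\gamma,\gamma)\ge \limsup_k \frac{1}{k}\, d_Y(g^{nk}\gamma,\gamma) \ge c_0\abs{n}$ removes the additive defect outright once your first step is repaired.
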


\subsection{Distance formula}
We shall need one more foundational result of Masur and  Minsky. 
A maximal multicurve has the property that every complementary component is a pair of pants, and therefore such multicurves are deemed \textit{pants decompositions}; any pants decomposition has $\xi(S)$ many components.  A \textit{marking} on $S$ is a pants decomposition $\mathcal{P} = \left\{\alpha_1,..., \alpha_{\xi(S)} \right\}$ together with a collection of \textit{transversal curves} $\left\{\mu_1,..., \mu_{\xi(S)} \right\}$ so that $i(\mu_i, \alpha_j) = 0$ whenever $i \neq j$ and so that $\mu_i$ intersects $\alpha_i$ the minimum number of times possible.

Markings $\mu$ have the key feature that for every subsurface $Y$, the projection $\pi_Y(\mu)$ is non-empty and has diameter at most $6$. Masur and Minsky showed that projections of markings to subsurfaces can be used to estimate distance in the mapping class group:

\begin{theorem}[Distance Formula \cite{MasurMinsky-CurveComplexII}]
\label{thm:DistanceFormula}
For any marking $\mu$ on $S$ and any finite generating set $X$ of $\Mod(S)$, there exists a constant $J_0\ge 1$ such that for each $J\ge J_0$ there exists $D\ge 1$ such that the word length of every element $f\in \Mod(S)$ can be estimated as
\[\abs{f}_X \asymp_D \sum_{Y\subset S} [[ d_Y(f\mu, \mu)]]_J \]
where $A\asymp_D B$ means $A \le DB +D$ and $B\le DA+D$, 
and where $[[x]]_{J}$ means $x$ whenever $x \ge J$ and means $0$ otherwise. 
\end{theorem}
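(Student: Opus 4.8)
This is the Masur--Minsky distance formula, and my plan is to follow their original route through the marking complex and the theory of hierarchies of tight geodesics; the tools recalled above (hyperbolicity of $\Cc(S)$, the Bounded Geodesic Image Theorem, and the Behrstock inequality) are ingredients but do not by themselves suffice. The first step is to pass to the \emph{marking complex} $\mathcal M(S)$, whose vertices are complete clean markings and whose edges are the elementary \emph{twist} and \emph{flip} moves. Since $\Mod(S)$ acts properly and cocompactly on $\mathcal M(S)$, the orbit map $f\mapsto f\mu$ is a $\Mod(S)$--equivariant quasi-isometry from the Cayley graph onto $\mathcal M(S)$, so $\abs{f}_X \asymp_D d_{\mathcal M(S)}(\mu, f\mu)$. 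It therefore suffices to prove, for markings $\mu,\nu$, that $d_{\mathcal M(S)}(\mu,\nu)\asymp_D \sum_{Y} [[d_Y(\mu,\nu)]]_J$, and I would establish the two inequalities separately.

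For the bound $\sum_Y [[d_Y(\mu,\nu)]]_J \le D\,d_{\mathcal M(S)}(\mu,\nu)+D$ (the more accessible direction), I would take a geodesic edge-path $\mu = m_0, m_1,\dots, m_\ell = \nu$ in $\mathcal M(S)$, with $\ell = d_{\mathcal M(S)}(\mu,\nu)$. The key input is that elementary moves are \emph{local}: a single move changes the projection to every subsurface $Y$ by a uniformly bounded amount, and the total subsurface projection between consecutive markings, $\sum_Y d_Y(m_{i-1},m_i)$, is bounded by a constant $b=b(S)$ (a standard consequence of the locality of moves on clean markings). Combined with the coarse triangle inequality $d_Y(\mu,\nu)\le \sum_i d_Y(m_{i-1},m_i)$ and the elementary bound $[[x]]_J\le x$, interchanging the order of summation yields $\sum_Y [[d_Y(\mu,\nu)]]_J \le \sum_i \sum_Y d_Y(m_{i-1},m_i) \le b\,\ell$, as desired.

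The reverse inequality $d_{\mathcal M(S)}(\mu,\nu)\le D\sum_Y[[d_Y(\mu,\nu)]]_J + D$ is the crux and is where the hierarchy machinery is unavoidable. Here I would construct a hierarchy of tight geodesics $H$ joining $\mu$ and $\nu$: a family of tight geodesics in the curve graphs of subsurfaces of $S$, organized by the subordinacy relation and anchored by a main geodesic in $\Cc(S)$. Two quantitative pillars then drive the estimate. First, the \emph{Large Links} phenomenon: a subsurface $Y$ supports a geodesic of $H$ essentially exactly when $d_Y(\mu,\nu)$ exceeds a fixed threshold (this dictates the cutoff $J_0$), and in that case the length of the supported geodesic is coarsely comparable to $d_Y(\mu,\nu)$. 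Second, the \emph{Resolution} of $H$ into a sequence of elementary moves $\mu = m_0,\dots,m_L = \nu$, with $L$ coarsely equal to the total geodesic length $\sum_{g\in H}\lvert g\rvert$. Chaining these gives $d_{\mathcal M(S)}(\mu,\nu)\le L \asymp \sum_{g\in H}\lvert g\rvert \asymp \sum_Y [[d_Y(\mu,\nu)]]_J$.

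I expect the main obstacle to be precisely this hierarchy construction and its analysis: proving the existence of tight geodesics and their coherent assembly into a hierarchy, establishing the Large Links estimate that converts subsurface projections into geodesic lengths, and carrying out the Resolution bookkeeping that bounds the number of elementary moves by the total hierarchy length. This is substantially deeper than the Bounded Geodesic Image and Behrstock inequalities recalled above, and it is also the step that produces the threshold $J_0$ beyond which the formula becomes insensitive to the choice of $J$.
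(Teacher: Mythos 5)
You should first note that the paper itself does not prove this statement: it is imported as background, with the proof living entirely in Masur--Minsky \cite{MasurMinsky-CurveComplexII}, so there is no internal argument to compare against. Your roadmap is faithfully that of the original source --- pass to the marking complex $\mathcal{M}(S)$ via the proper cocompact action (Milnor--\v{S}varc), then run the hierarchy machinery, with Large Links converting subsurface projections into lengths of supported tight geodesics and the Resolution procedure converting total hierarchy length into a count of elementary moves, which also correctly locates the origin of the threshold $J_0$. But as you concede, the entire hierarchy apparatus (existence and tightness of geodesics, subordinacy, Large Links, Resolution) is deferred rather than proved, and that apparatus \emph{is} the theorem; so the hard direction remains a plan, not a proof.

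Moreover, the one direction you do claim to carry out in full contains a genuine error. You assert that consecutive markings along a geodesic in $\mathcal{M}(S)$ satisfy $\sum_Y d_Y(m_{i-1},m_i)\le b(S)$, with the sum over \emph{all} subsurfaces, and then discard the cutoff via $[[x]]_J\le x$ and interchange summations. That sum is not finite in general: a complete marking has nonempty projection to every essential subsurface, and $d_Y(m_{i-1},m_i)$ can be $\ge 1$ for infinitely many $Y$ (infinitely many annuli, for instance), even though each individual term is bounded by a uniform constant per elementary move. What is true is that $[[\,d_Y(m_{i-1},m_i)\,]]_J = 0$ for every $Y$ once $J$ exceeds that per-move constant; but the truncation $[[\,\cdot\,]]_J$ is not subadditive along the path, so you cannot chain per-move estimates after dropping it --- pushed literally, your interchange would ``prove'' that $\sum_Y [[\,d_Y(\mu,\nu)\,]]_J$ vanishes. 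The standard repair is a counting argument: if $d_Y(\mu,\nu)\ge J$, then a definite fraction of $d_Y(\mu,\nu)$ many moves must alter $\pi_Y$, and one bounds the multiplicity with which a single move can be charged by observing (via the Behrstock inequality, \Cref{thm:behrstock}, and the resulting time-ordering of large-projection subsurfaces) that the subsurfaces ``active'' at a given moment are pairwise non-overlapping, hence boundedly many in number. Alternatively, as in \cite{MasurMinsky-CurveComplexII}, both inequalities fall out of the hierarchy machinery simultaneously, which is why the original proof never needs the naive summation you attempted.
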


\section{Reducible subgroups} 
\label{sec:reducible_subgroups}

In this section we gather the needed background concerning reducible subgroups of mapping class groups.

\begin{definition}
A subgroup $H\le \Mod(S)$ is \define{reducible} if there is a multicurve $\alpha$ so that $h(\alpha) = \alpha$ for all $h\in H$. Any such $\alpha$ is called a \emph{reducing multicurve} for $H$.
\end{definition}

The following classical theorem of Ivanov generalizes the Nielsen--Thurston classification from elements to subgroups:

\begin{theorem}[Ivanov \cite{Ivanov-SubgroupsTeichmullerModularGroups}]
\label{thm:Ivanov-reducible}
Every subgroup of $\Mod(S)$ is either finite, reducible, or contains a pseudo-Anosov element.
\end{theorem}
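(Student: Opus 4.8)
The plan is to prove the (non-exclusive) trichotomy by establishing its only substantive case: if $H\le\Mod(S)$ is infinite and contains no pseudo-Anosov element, then $H$ is reducible. The finite case needs nothing, and a subgroup containing a pseudo-Anosov lands in the third alternative by definition, so this is the heart of the matter. The organizing principle is the dictionary between the Nielsen--Thurston type of an element and its action on the hyperbolic graph $\Cc(S)$: a mapping class acts loxodromically on $\Cc(S)$ exactly when it is pseudo-Anosov (the input behind \Cref{thm:minimal_translation_mangahas_cor}), whereas a periodic element fixes a finite set and a reducible element fixes a component of its reducing multicurve, so in both cases it acts with a bounded orbit, i.e.\ elliptically. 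Thus the hypothesis ``no pseudo-Anosov'' says precisely that \emph{every} element of $H$ acts elliptically on $\Cc(S)$.

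First I would produce a single infinite-order reducible element to serve as a seed. Since $\Mod(S)$ is virtually torsion-free (as used in \Cref{ssub:MCG_Reducibles} via \cite{Ivanov-SubgroupsTeichmullerModularGroups,BirmanLubotzkyMcCarthy}), an infinite subgroup $H$ must contain an element $g$ of infinite order; being neither periodic nor pseudo-Anosov, $g$ is reducible and carries a canonical reduction system $\mathrm{CRS}(g)$, a nonempty $g$--invariant multicurve depending equivariantly on $g$ in the sense that $\mathrm{CRS}(hgh^{-1}) = h\cdot\mathrm{CRS}(g)$ for every $h\in\Mod(S)$. The goal then reduces to assembling these element-wise reduction systems into a \emph{single} $H$--invariant multicurve: if I can show that the union $\Sigma=\bigcup_{h}\mathrm{CRS}(h)$, taken over all infinite-order $h\in H$, is itself a multicurve, then equivariance together with the fact that $hgh^{-1}\in H$ makes $\Sigma$ invariant under $H$, whence $H$ is reducible.

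The main obstacle is exactly this assembly step, and it is where genuine Thurston theory---rather than coarse geometry of $\Cc(S)$---is needed. The crux is to rule out transversality: if two reduction curves $c_1\in\mathrm{CRS}(h_1)$ and $c_2\in\mathrm{CRS}(h_2)$ intersect, I would pass to the essential subsurface $Z$ that they fill, on which suitable powers of $h_1$ and $h_2$ act, and run a ping--pong argument on the measured foliations supported in $Z$ (via Thurston's construction) to produce a mapping class that is pseudo-Anosov \emph{on} $Z$. When $Z=S$ this directly contradicts the standing hypothesis; when $Z\subsetneq S$ one argues by induction on the complexity $\xi$, analyzing the restrictions of the relevant elements to $Z$ and to its complement, to locate either an honest pseudo-Anosov of $S$ or a strictly smaller obstruction. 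The payoff of the no-pseudo-Anosov hypothesis is therefore that all reduction curves arising from $H$ can be realized disjointly, so $\Sigma$ is a multicurve and the assembly goes through.

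I could alternatively streamline the first two paragraphs by a coarse-geometric route: the $\Mod(S)$--action on $\Cc(S)$ is acylindrical (Bowditch), so an action with no loxodromic element is neither parabolic nor focal (Osin), which forces $H$ to have a bounded orbit. But even then one still faces the same essential problem of converting a bounded orbit in the non-locally-finite graph $\Cc(S)$ into an invariant multicurve, which is again the content of the assembly step above. So in either approach the decisive, and only genuinely hard, ingredient is the passage from ``every element elliptic'' to ``a common invariant multicurve,'' supplied by the canonical reduction system machinery of \cite{BirmanLubotzkyMcCarthy,Ivanov-SubgroupsTeichmullerModularGroups}.
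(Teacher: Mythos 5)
First, a structural note: the paper does not prove this statement at all --- it is quoted as Ivanov's theorem with a citation --- so there is no internal proof to compare against; your attempt has to be judged on its own. Judged that way, it has a genuinely false step at its core. Your assembly claim --- that the no-pseudo-Anosov hypothesis forces all element-wise reduction curves of $H$ to be realized disjointly, so that $\Sigma=\bigcup_h \mathrm{CRS}(h)$ is a multicurve --- is not merely unproven but wrong. Take an essential proper subsurface $Y\subsetneq S$ and two essential subsurfaces $Y_1,Y_2\subset Y$ with $\partial Y_1$ intersecting $\partial Y_2$, and let $h_i$ be a partial pseudo-Anosov fully supported on $Y_i$. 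Then $H=\langle h_1,h_2\rangle$ is infinite, contains no pseudo-Anosov of $S$ (every element is supported in $Y$ and fixes $\partial Y$), and is reducible --- so it satisfies both the hypotheses and the conclusion of the theorem --- yet $\mathrm{CRS}(h_1)=\partial Y_1$ and $\mathrm{CRS}(h_2)=\partial Y_2$ intersect, so your $\Sigma$ is not a multicurve. This shows the contradiction scheme cannot close: when the subsurface $Z$ filled by the two intersecting reduction curves is proper, there is no contradiction to reach and no ``strictly smaller obstruction'' for the induction to consume; the configuration simply occurs inside reducible groups. A secondary problem is the mechanism itself: $c_i\in\mathrm{CRS}(h_i)$ gives no twisting dynamics about $c_i$ (in the example, $h_i$ is not a twist and does not even move curves in the manner Thurston's two-twist construction requires), and $h_1$ need not preserve $Z$ or induce any mapping class of $Z$, so the proposed ping--pong on foliations supported in $Z$ has no players.

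The repair is to abandon the union of element-wise reduction systems and work with the group-level canonical reduction system, exactly as in the paper's \Cref{def:canonical_reducing_sys}: let $R(H)$ be the set of curves with finite $H$--orbit and let $\partial H$ consist of those elements of $R(H)$ disjoint from every other element of $R(H)$. Invariance of $\partial H$ under $H$ is then automatic (your equivariance observation survives in this form), and the entire content of the theorem becomes the nonemptiness of $R(H)$ --- equivalently, producing a single curve with finite $H$--orbit --- when $H$ is infinite and contains no pseudo-Anosov. That step is where the genuine Thurston theory lives (Ivanov's analysis via pure finite-index subgroups and dynamics on $\mathcal{PML}$, or McCarthy--Papadopoulos), and it is precisely the direction the paper outsources to \cite{Ivanov-SubgroupsTeichmullerModularGroups}: note that the Handel--Thurston-style maximality argument in the paper's \Cref{lem:reducing_multicurve} takes reducibility as a \emph{hypothesis} and so cannot be run in your direction. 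Your first two paragraphs (extracting an infinite-order element from virtual torsion-freeness, and the ellipticity framing) are fine, and your closing remark is accurate as far as it goes --- the acylindricity route via Osin does reduce to bounded orbits but leaves untouched the same hard conversion from bounded orbit to invariant multicurve --- but as written the decisive step of your proof rests on a claim that the counterexample above refutes.
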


\begin{corollary}
An infinite subgroup $H$ of $\Mod(S)$ is virtually reducible if and only if it is reducible.
\end{corollary}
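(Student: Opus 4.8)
The plan is to prove the nontrivial implication---that an infinite virtually reducible subgroup is in fact reducible---by invoking Ivanov's trichotomy (\Cref{thm:Ivanov-reducible}) together with the basic incompatibility between pseudo-Anosov elements and reducibility. The reverse implication is immediate, since $H$ is a finite-index subgroup of itself and hence reducible implies virtually reducible.

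So suppose $H$ is infinite and admits a finite-index reducible subgroup $H'\le H$, preserving some reducing multicurve $\alpha$. By \Cref{thm:Ivanov-reducible}, applied to $H$ itself, the group $H$ is either finite, reducible, or contains a pseudo-Anosov element. Since $H$ is infinite, the first possibility is excluded, and it suffices to rule out the third.

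Suppose toward a contradiction that $H$ contains a pseudo-Anosov element $f$. Because $H'$ has finite index in $H$, the left cosets $H', fH', f^2H', \dots$ cannot all be distinct, so $f^i H' = f^j H'$ for some $i<j$ and thus a positive power $f^k$ (with $k=j-i$) lies in $H'$. A positive power of a pseudo-Anosov map is again pseudo-Anosov, with the same transverse pair of invariant foliations $\mathcal F_\pm$ and stretch factor $\lambda^k>1$, so $f^k$ is pseudo-Anosov. On the other hand, $f^k\in H'$ must fix the multicurve $\alpha$, contradicting the standard fact that a pseudo-Anosov element preserves no multicurve (its only invariant foliations are $\mathcal F_\pm$, which are filling and have no closed leaves). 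Hence $H$ contains no pseudo-Anosov element, and \Cref{thm:Ivanov-reducible} forces $H$ to be reducible.

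The only substantive input is Ivanov's theorem; the remaining ingredients are elementary. I do not anticipate any real obstacle, beyond recording the two standard facts used in the final step: that passing to a positive power preserves being pseudo-Anosov, and that a pseudo-Anosov element admits no invariant multicurve.
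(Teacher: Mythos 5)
Your proof is correct and follows essentially the same route as the paper: both arguments invoke Ivanov's theorem (\Cref{thm:Ivanov-reducible}) to produce a pseudo-Anosov element $f$ in a hypothetically non-reducible $H$, then pass to a positive power of $f$ lying in the finite-index reducible subgroup to reach a contradiction. The paper simply writes the power as $f^{k!}$ with $k=[H:H_0]$ rather than running the coset pigeonhole explicitly, and leaves implicit the two standard facts you spell out (powers of pseudo-Anosovs are pseudo-Anosov, and pseudo-Anosov elements preserve no multicurve).
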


\begin{proof}
If $H$ is reducible it is clearly also virtually reducible. Conversely,
suppose $H$ has a finite-index reducible subgroup $H_0$. 
If $H$ fails to be reducible, then it contains a pseudo-Anosov element $f\in H$ by \Cref{thm:Ivanov-reducible}.  
Then
$f^{k!}\in H_0$ where $k=[H:H_0]$. But this contradicts the fact that $H_0$
is reducible. 
\end{proof}

It is well known that each reducible element $f$ has a \emph{canonical reducing system} $\partial f$. This can be characterized in multiple ways; we follow the approach of Handel--Thurston \cite[\S2]{HandelThurston}. Define:
\begin{itemize}
	\item $R(f)$ to be the set of all curves $\alpha$ whose orbit $\{f^k(\alpha) \mid k\in \Z\}$ is finite;
	\item $\partial f$ to be the set of elements of $R(f)$ that are disjoint from all other elements of $R(f)$.
\end{itemize}
This associates a (possibly empty) multicurve to each element $f\in \Mod(S)$ that is characterized by the property that $\{f^k(\beta)\mid k\in \Z\}$ is infinite for any curve $\beta$ intersecting $\partial f$. Note $f(\partial f) = \partial f$ by construction and that $\partial f$ is clearly empty whenever $f$ is periodic or pseudo-Anosov. In \cite[Lemma 2.2]{HandelThurston}, Handel and Thurston show $\partial f$ is nonempty whenever $f$ is reducible and infinite order. This can be extended to reducible subgroups in exactly the same way:

\begin{definition}
\label{def:canonical_reducing_sys}
The \define{canonical reducing system} $\partial H$ of a subgroup $H\le \Mod(S)$ consists of those elements of $R(H)$ that are disjoint from all other elements of $R(H)$, where $R(H)$ denotes the set of curves $\alpha$ whose orbit $H\cdot \alpha$ is finite.
\end{definition}
\begin{remark}
\label{rem:reducing_system_for_subgroups}
If $H'\le H$, then $R(H) \subset R(H')$ and therefore  $d_S(\partial H', \partial H) \leq 1$. If, moreover $[H':H]<\infty$, then $R(H) = R(H')$ and thus $\partial H' = \partial H$.
\end{remark}

While it is not obvious that $\partial H$ should be nonempty, the argument from \cite[Lemma 2.2]{HandelThurston} goes through with only minor adjustments to prove: 

\begin{lemma}
\label{lem:reducing_multicurve}
If $H\le \Mod(S)$ is reducible and infinite, then $\partial H$ is a nonempty, reducing multicurve for $H$. In contrast, $\partial H$ is empty whenever $H$ is finite or contains a pseudo-Anosov element.
\end{lemma}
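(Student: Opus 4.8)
The plan is to mirror the structure of the Handel--Thurston argument \cite[Lemma 2.2]{HandelThurston} used for a single reducible element, replacing the cyclic group $\langle f\rangle$ with an arbitrary infinite reducible subgroup $H$. Let me first dispose of the two easy assertions: if $H$ is finite, then every curve $\alpha$ has finite $H$--orbit, so $R(H)$ is the set of all curves; since there exist curves that intersect each other, no curve can be disjoint from all others, and hence $\partial H=\emptyset$. If $H$ contains a pseudo-Anosov element $f$, then any curve $\alpha\in R(H)$ would in particular have finite $\langle f\rangle$--orbit, contradicting that $f$ is pseudo-Anosov; thus $R(H)=\emptyset$ and $\partial H=\emptyset$.

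The substance is the first assertion: when $H$ is reducible and infinite, $\partial H$ is a nonempty reducing multicurve. First I would observe that $R(H)$ is nonempty: since $H$ is reducible, there is a multicurve $\alpha$ with $h(\alpha)=\alpha$ for all $h\in H$, so each component of $\alpha$ lies in $R(H)$. Next I would check that $R(H)$ is $H$--invariant (immediate, since $h$ permutes finite orbits) and that $\partial H$, defined as the curves in $R(H)$ disjoint from all others in $R(H)$, is therefore also $H$--invariant as a set. The key point is that $\partial H$ is \emph{nonempty} whenever $R(H)$ is. Here I would argue as in Handel--Thurston: consider the curves in $R(H)$ that realize the \emph{minimal} nonzero geometric intersection pattern, or more precisely, pass to the subsurface picture. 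One clean way is to let $R(H)$ be given, realize a representative collection in minimal position, and take the curves of $R(H)$ that are ``outermost'' or innermost among mutually intersecting families; the disjointness-maximal curves survive into $\partial H$. Concretely, I would show that if $\beta$ is any curve intersecting some curve in $\partial H$, then $\beta$ must have infinite $H$--orbit, which simultaneously establishes that $\partial H$ is a genuine multicurve (its elements are pairwise disjoint by definition) and that it is preserved by $H$.

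To see $\partial H$ is a \emph{reducing} multicurve, I must verify $h(\partial H)=\partial H$ for all $h\in H$, not merely that it is $H$--invariant as an unordered set --- but these coincide here, since $\partial H$ is by construction $H$--invariant (any $h$ maps $R(H)$ to $R(H)$ bijectively preserving the disjointness relation, hence maps $\partial H$ to $\partial H$), and a multicurve preserved setwise by all of $H$ is exactly a reducing multicurve. The nonemptiness claim is where I expect the real work to lie. The subtlety in the subgroup setting (as opposed to the cyclic $\langle f\rangle$ setting) is that $R(H)$ need not be finite a priori, so one cannot simply invoke finiteness of the orbit to extract a distinguished sub-multicurve. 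The resolution, following Handel--Thurston, is that although $R(H)$ may contain infinitely many curves, the subcollection of curves realizing the disjointness-maximality condition is nonempty: one argues that among any finite set of mutually intersecting curves in $R(H)$ there is a coarsely bounded amount of intersection, and a standard surgery/innermost-disk or minimal-position argument produces a curve in $R(H)$ disjoint from everything else in $R(H)$. I expect verifying this nonemptiness --- i.e.\ adapting the Handel--Thurston surgery argument to handle the possibly infinite but $H$--invariant family $R(H)$ --- to be the main obstacle, and I would cite \cite[Lemma 2.2]{HandelThurston} and indicate the minor modifications needed, as the paper's statement explicitly promises.
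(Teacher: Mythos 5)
Your treatment of the two easy assertions is correct and matches the paper, and you have correctly identified nonemptiness of $\partial H$ as the crux. But the proposal has a genuine gap there, in two respects. First, your stated key claim --- that $\partial H$ is nonempty whenever $R(H)$ is --- is false: when $H$ is finite, $R(H)$ consists of \emph{all} curves and yet $\partial H=\emptyset$, as you yourself note earlier. So nonemptiness cannot follow from nonemptiness of $R(H)$ alone; the hypothesis that $H$ is infinite must enter the argument, and nowhere in your sketch does it do so. Second, the mechanism you propose (innermost-disk/surgery on mutually intersecting curves, ``outermost'' curves in $R(H)$) is not the Handel--Thurston argument and does not work as described: a curve produced by surgery on elements of $R(H)$ has no reason to have finite $H$--orbit, and since $R(H)$ may be infinite there is no reason a disjointness-maximal curve exists by any direct extremal or minimal-position argument. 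The circular formulation ``if $\beta$ intersects some curve in $\partial H$ then $\beta$ has infinite orbit'' presupposes $\partial H\neq\emptyset$, which is what must be proved.

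The actual argument (both in \cite[Lemma 2.2]{HandelThurston} and in the paper's adaptation) runs through filled subsurfaces rather than surgery. Using the lattice structure on essential subsurfaces \cite[Lemma 2.1]{BKMM-GeomRigidityMCGs}, one considers the poset $\mathcal{S}$ of subsurfaces filled by \emph{finite} subsets of $R(H)$; chains in $\mathcal{S}$ are finite because Euler characteristic strictly decreases, so a maximal $Y\in\mathcal{S}$ exists, filled by some $\Gamma=\{\alpha_1,\dots,\alpha_n\}\subset R(H)$. Since each $\alpha_i$ has finite $H$--orbit, the orbit--stabilizer theorem yields a finite-index subgroup $H'\le H$ fixing every $\alpha_i$. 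This is where infiniteness is used: if $Y=S$, then $\Gamma$ fills $S$ and any mapping class fixing each $\alpha_i$ is trivial, forcing $H'=1$ and hence $\abs{H}<\infty$, a contradiction. So $Y\subsetneq S$ is proper, $Y$ has finite $H$--orbit, and $\partial Y\subset R(H)$; finally, if some component of $\partial Y$ were crossed by a curve $\gamma\in R(H)$, then $\Gamma\cup\{\gamma\}$ would fill a strictly larger subsurface, contradicting maximality of $Y$. Hence $\partial Y\subset\partial H$ and $\partial H\neq\emptyset$. Your proposal is missing all three pillars of this argument --- the maximal filled subsurface, the passage to a finite-index stabilizer, and the Alexander-method contradiction that consumes the infiniteness hypothesis --- so deferring to ``minor modifications'' of \cite{HandelThurston} does not close the gap, especially since your reconstruction of that lemma's proof is not the one it actually contains.
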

\begin{proof}
If $H$ is finite, then $R(H)$ consists of all curves on $S$ and so $\partial H$ is empty. Similarly, if $H$ contains a pseudo-Anosov, then $R(H)$ is empty and so is $\partial H$. 

It remains to suppose $H$ is reducible and infinite. 
According to \cite[Lemma 2.1]{BKMM-GeomRigidityMCGs}, the set of (possibly disconnected) essential subsurfaces of $S$ forms a lattice under the relation of essential containment. In particular, any finite collection $\alpha_1,\dots,\alpha_n$ of curves (or, more formally, their annular neighborhoods) \define{fill} a well-defined subsurface $Y$ that is characterized as the unique topologically minimal subsurface essentially containing each curve $\alpha_i$.
 Following \cite{HandelThurston}, let $\mathcal{S}$ denote collection of all subsurfaces $Y$ that are filled by some finite subset of $R(H)$. The set $\mathcal{S}$ is partially ordered by inclusion.
Notice that every chain $Y_1 \subsetneq Y_2 \subsetneq\dots$ is finite, since the Euler characteristic must decrease at each step in a chain and all subsurfaces have Euler characteristic bounded below by that of the entire surface. Hence $\mathcal{S}$ has a maximal element $Y$, say filled by a finite list $\Gamma = \{\alpha_1,\dots,\alpha_n\}$ of curves in $R(H)$. Since each $\alpha_i$ has a finite $H$--orbit, it follows that the set of ordered tuples $\{(h\alpha_1,\dots, h \alpha_n)\mid h\in H\}$ is finite. The orbit stabilizer theorem thus implies there is a finite-index subgroup $H'\le H$ that fixes each curve $\alpha_1,\dots,\alpha_n$.

Suppose now that $Y = S$, meaning  $S$ is filled by $\Gamma = \{\alpha_1,\dots, \alpha_n\}$. By the Alexander trick, any element fixing each curve in $\Gamma$ is isotopic to the identity. Thus $H'$ is the trivial group and  $|H|< \infty$. As this contradicts our hypothesis, it must be that $Y \neq S$ is a proper subsurface. 

Notice that the subsurface $Y$ has finite $H$--orbit and thus its boundary $\partial Y$ is contained in $R(H)$. If $\partial Y$ is not in $\partial H$, then there must be some curve $\gamma$ intersecting it with finite $H$-orbit. In this case $\Gamma' = \{\alpha_1,\dots,\alpha_n,\gamma\}\subset R(H)$ fills a strictly larger surface $Y'$, contradicting the maximality of $Y$ in $\mathcal{S}$. Therefore $\partial H$ contains $\partial Y$ and is non-empty.
 \end{proof}

Note that the above proof actually characterizes $\partial H$ as the union of $\partial Y$ over all maximal subsurfaces $Y$ in the partial order. Indeed, this union is clearly contained in $\partial H$; conversely, given some $\alpha \in \partial H$,  if $\alpha$ is not on the boundary of any such maximal subsurface, it must live in the interior of one. This implies that there are curves intersecting $\alpha$ with finite $H$-orbit, contradicting the definition of $\partial H$.

\begin{example}\label{remark:reducibleExamples}
Let us illustrate some ways to construct reducible subgroups and describe their canonical reducing systems.
\begin{itemize}

 \item For any multicurve $\alpha \subset S$ its stabilizer $\stab(\alpha) = \{h\in \Mod(S) : h\alpha =\alpha\}$ is a reducible subgroup whose boundary is precisely $\alpha$. Every reducible subgroup is naturally a subgroup of some multicurve stabilizer, namely that of its boundary.
\item Suppose $Y=Y_1 \sqcup \cdots \sqcup Y_n$ is a collection of disjoint, essential subsurfaces and consider infinite subgroups $H_i\leq \Mod(Y_i)\leq \Mod(S)$ (See \Cref{fig:multisurface}). Let $H$ be a reducible group 
generated by $H_1,\ldots,H_n$ and any collection of elements of $\Mod(S)$ preserving $Y$.
In general, we have $d_S(\partial Y, \partial H)\leq 1$. 
If additionally each $H_i$ contains a fully supported element, 
it follows that $\partial H =\partial Y$.

 \end{itemize}
\begin{figure}[h]
	\includegraphics[width=.7\textwidth]{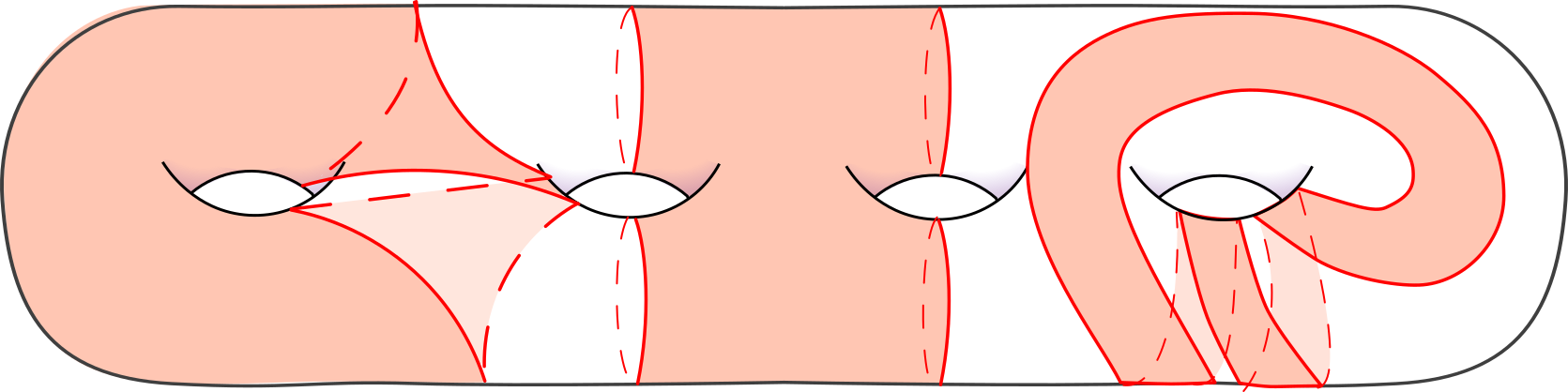}
	\caption{
		A disjoint union $Y= Y_1 \sqcup Y_2 \sqcup Y_3$ of essential subsurfaces. }
		\label{fig:multisurface}
\end{figure}
\end{example}

\subsection{Multitwist groups}\label{subsec:multitwist}
We shall also be interested in the following restricted class of reducible subgroups, which we will call multitwist groups.

A \emph{multitwist} in $\Mod(S)$ is any element $f = T_{\alpha_1}\dotsb T_{\alpha_k}$ that can be expressed as a product of commuting Dehn twist $T_{\alpha_i}$; that is, any element supported on a disjoint union of annuli. We call a subgroup $H\le \Mod(S)$ a \define{multitwist group} if every element of $H$ is a multitwist. 
We note the following simple observation:

\begin{lemma}
\label{lem:multitwist}
A subgroup $H$ is a multitwist group if and only if there is a multicurve $\alpha = (\alpha_1,\dots,\alpha_k)$ so that $H$ is contained in $\langle T_{\alpha_1},\dots,T_{\alpha_k}\rangle\cong \Z^k$. In particular, $H$ is abelian and reducible.
\end{lemma}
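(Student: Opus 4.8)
The plan is to prove \Cref{lem:multitwist} by establishing both directions of the biconditional, with the forward (``only if'') direction being the substantive one. The reverse direction is essentially immediate: if $H \le \langle T_{\alpha_1},\dots,T_{\alpha_k}\rangle$, then since the $\alpha_i$ form a multicurve the twists $T_{\alpha_i}$ commute and generate a group isomorphic to $\Z^k$ (the standard fact that disjoint Dehn twists are independent and commute), so $H$ is abelian, and every element of $H$ is a product of powers of commuting twists, hence a multitwist; reducibility follows because every element preserves the multicurve $\alpha$. I would dispatch this direction in one or two sentences.

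For the forward direction, the task is: assuming every element of $H$ is a multitwist, produce a single multicurve $\alpha$ whose twists contain all of $H$. First I would dispose of the trivial case $H = \{1\}$. Otherwise, the key structural input is that each nontrivial multitwist $f \in H$ has a well-defined \emph{support}, namely the (isotopy class of the) multicurve consisting of the core curves $\alpha_i$ of the annuli on which $f$ is a nonzero power of a twist; equivalently this is the canonical reducing system $\partial f$ of the multitwist. The plan is to let $\alpha$ be the union of the supports of all elements of $H$, and then argue two things: (i) $\alpha$ is genuinely a multicurve, i.e. the component curves are pairwise disjoint and there are only finitely many of them; and (ii) $H \le \langle T_{\beta} : \beta \in \alpha\rangle$.

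The crux — the step I expect to be the main obstacle — is (i), specifically showing that the supports of any two elements $f, g \in H$ consist of pairwise disjoint curves. The natural approach is to argue by contradiction using the product structure of multitwists: suppose $\beta \in \supp(f)$ and $\gamma \in \supp(g)$ with $i(\beta,\gamma) \ne 0$. Then one examines the element $fg$ (or a suitable power $f^p g^q$) and shows it cannot be a multitwist, contradicting the hypothesis on $H$. The cleanest way to see this is to use the well-understood dynamics of twists on the curve graph or on intersection numbers: if $\beta$ and $\gamma$ intersect, then $T_\beta$ and $T_\gamma$ generate a group containing a pseudo-Anosov (supported on the subsurface filled by $\beta\cup\gamma$, by the Thurston construction / the ping-pong estimates on $i(T_\beta^p T_\gamma^q \delta, \delta)$), and such an element is not a multitwist since its canonical reducing system is empty while its action on the filled subsurface has positive translation length on the corresponding curve graph. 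Thus $H$ would contain a non-multitwist, a contradiction. An alternative, more self-contained argument uses \Cref{thm:minimal_translation_mangahas_cor} or the canonical reducing system machinery: one can show $\partial(fg)$ would have to contain curves that intersect, which is impossible for the reducing system of any single mapping class (a canonical reducing system is always a multicurve).

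With pairwise disjointness established, finiteness of the number of component curves in $\alpha$ follows because a collection of pairwise disjoint distinct essential curves on $S$ has at most $\xi(S) = 3g+n+b-3$ elements (a multicurve is bounded by the complexity). Writing $\alpha = (\alpha_1,\dots,\alpha_k)$, this exhibits $\alpha$ as a multicurve. Finally, for (ii): any $f \in H$ is supported on annuli about curves in $\alpha$ by construction of $\alpha$, so $f = T_{\alpha_1}^{m_1}\cdots T_{\alpha_k}^{m_k}$ for integers $m_i$, giving $f \in \langle T_{\alpha_1},\dots,T_{\alpha_k}\rangle$. Since we have already noted these twists commute and generate $\Z^k$, this completes the forward direction, and the ``in particular'' clause (abelian and reducible) then holds for $H$ as a subgroup of an abelian reducible group preserving $\alpha$.
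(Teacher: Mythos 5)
Your proposal is correct and takes essentially the same approach as the paper: both define $\alpha$ as the union of the supports of all elements of $H$ and establish pairwise disjointness by contradiction, invoking the Thurston--Penner construction to produce a (partial) pseudo-Anosov on the subsurface filled by two intersecting support curves, so that a suitable product (the paper uses $fg^{-1}$, with Penner's criterion giving a partial pseudo-Anosov factor in its normal form) fails to be a multitwist. The finiteness of $\alpha$ and the containment $H \le \langle T_{\alpha_1},\dots,T_{\alpha_k}\rangle$ that you spell out are exactly the paper's concluding observation that the resulting set is a multicurve whose twist group contains $H$.
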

\begin{proof}
Let $C$ be the set of all curves $\alpha_i$ appearing (with nontrivial power) in any elements $f = T_{\alpha_1}^{k_1},\dots, T_{\alpha_m}^{k_m}$ of $H$. We claim the elements of $C$ are pairwise disjoint. This will prove the claim since then $C$ is a multicurve and $H$ is contained in the group generated by the set $\{T_\alpha\}_{\alpha\in C}$.

If the claim is false, we can find two elements $f = T_{\alpha_1}^{k_1}\dotsb T_{\alpha_m}^{k_m}$ and $g = T_{\beta_1}^{\ell_1}\dotsb T_{\beta_n}^{\ell_n}$ so that some $\alpha_i$ intersects some $\beta_j$. In this case, let $Y \subset S$ be the subsurface filled by $\alpha_{i}$ and $\beta_{j}$. Penner's generalization of Thurston's construction of pseudo-Anosov homeomorphisms \cite{Penner} implies that $T^{k}_{\alpha_{i}} T^{-l}_{\alpha_{j}}$ is pseudo-Anosov on $Y$ for any positive integers $k,l$. It follows that the normal form of $fg^{-1}$ contains a partial pseudo-Anosov factor, and in particular, it is not a multi-twist. \end{proof}

We say a subgroup $H\le \Mod(S)$ is virtually a multitwist group if it has a finite index subgroup $H_0\le H$ that is a multitwist group. \Cref{lem:multitwist} shows that multitwist subgroups are exactly the class of groups considered by Loa in \cite{Loa}.

\section{Geometrically finite subgroups of the mapping class group} 
\label{ssub:RGF}

In the classical setting of Kleinian groups, geometric finiteness can be viewed as a \emph{relative} version of convex cocompactness that allows for parabolic isometries in certain prescribed subgroups. 
As an illustrative example, consider a 
complete, finite-volume, cusped hyperbolic $3$--manifold $M$, which has the feature that all elements of $\pi_1(M)\le \mathrm{Isom}(\mathbb{H}^3)$ are loxodromic aside from those that are conjugate into the parabolic $\Z^2$ subgroups corresponding to the toroidal cusps of $M$.

Motivated by this analogy, Dowdall, Durham, Leininger and Sisto defined in \cite{DowdallDurhamLeiningerSisto-ExtensionsII} the notion of parabolically geometrically finite subgroups of $\Mod(S)$ to capture the idea of being relatively convex cocompact in a way that is compatible with the presence for multitwist elements, which are precisely the parabolic isometries of Teichm\"uller space. Udall \cite[Definition 6.4]{Udall-combinations_of_PGF} later expanded this definition to allow for peripheral subgroups containing more general reducible elements. This leads to the following formulation:

\begin{definition}
\label{def:RGF}
We say a subgroup $G<\Mod(S)$ is {\em reducibly geometrically finite
(RGF) relative to a collection $\mathcal H=\{H_1,\ldots,H_n\}$} of reducible subgroups of $G$ if
\begin{enumerate}
\item $G$ is hyperbolic relative to the collection $\mathcal H$, and 
\item the coned off Cayley graph $\widehat{\Gamma}(G;\mathcal{H})$ of $G$ with respect to $\mathcal{H}$ admits a $G$--equivariant coarse quasi-isometric embedding into the curve graph $\cc(S)$.
\end{enumerate}
Such an subgroup is more specifically {\em parabolically geometrically
finite (PGF)} relative to $\mathcal{H}$ if each subgroup $H_i$ is virtually a multitwist group. 
We also  say that $G$ is \define{RGF}/\define{PGF} if it is so relative to some finite collection $\mathcal H$ of subgroups.
\end{definition}

\begin{remark}
We note that this differs slightly from Udall's formulation \cite[Definition 6.4]{Udall-combinations_of_PGF}, which additionally requires the subgroups $H_i$ to be ``virtually pure reducible strongly undistorted;'' an assumption which allows one to prove \cite[Theorem 6.8]{Udall-combinations_of_PGF} the subgroup $G$ is undistorted in the mapping class group.
\end{remark}

\subsection{Known examples of geometrically finite subgroups}
A main goal of this paper is to present new constructions of RGF and PGF subgroups of the mapping class group. To give context, here we survey the landscape of geometric finiteness and review the known examples from the literature.

The notion of parabolic geometric finiteness was introduced in \cite{DowdallDurhamLeiningerSisto-ExtensionsII} in the context of studying the geometry of surface group extensions associated to lattice Veech groups. Recall that to each subgroup $G\le \Mod(S)$ there is an associated $\pi_1(S)$--extension group $\Gamma_G$ obtained by taking the preimage of $G$ under the forgetful map $\Mod(S,p)\to \Mod(S)$ of the Birman exact sequence. Recall also that a subgroup $G\le \Mod(S)$ is a \define{Veech group} if it stabilizes a Teichm\"uller disk $D$, and that it is moreover a \define{lattice} if the quotient $D/G$ has finite volume. In \cite{DowdallDurhamLeiningerSisto-ExtensionsII} it was shown that if $G$ is a lattice Veech group, then the associated extension $\Gamma_G$ is a hierarchically hyperbolic group. This result was later extended by Bongiovanni \cite{bongiovanni2024extensionsfinitelygeneratedveech} to handle all finitely generated Veech groups. 
Earlier work of Tang \cite{Tang} had moreover shown that finitely generated Veech groups satisfy the conditions to be PGF. These results give evidence that finitely generated Veech groups should qualify as ``geometrically finite'' and that \Cref{def:RGF} is a reasonable formulation of the notion.

In the spirit of the Klein--Maskit combination theorem for Kleinian groups, Leininger and Reid \cite{LR} gave a combination theorem for Veech subgroups which shows, in the simplest case, that if $G\le \Mod(S)$ is a Veech subgroup with a maximal parabolic subgroup $H\le G$, then for every ``sufficiently complicated'' partial pseudo-Anosov centralizing $H$, the amalgamated free product $G \ast_H \phi G\phi\inv$ embeds into $\Mod(S)$. Such combinations are interesting in part because they allow one to construct higher-genus surface subgroups of $\Mod(S)$ with the property that all elements are pseudo-Anosov except for a single conjugacy class. Udall \cite{Udall-combinations_of_PGF} has recently analyzed these Leininger--Reid combinations from the new perspective of geometric finiteness and shown they are indeed PGF. In fact, Udall proves a general combination theorem, showing that an amalgamated free product of PGF groups over parabolic subgroups will both embed into $\Mod(S)$ and be PGF, provided a technical ``$L$--local large projections'' property is satisfied (analogous to the above ``sufficiently complicated'' assumption on the partial pseudo-Anosov $\phi$).

Finally, as indicated in \Cref{intro}, Loa \cite{Loa} considered free products of two multitwist subgroups $H_1,H_2\le \Mod(S)$ and proved there is a constant $D = D(S)$ such that if $d_S(\partial H_1, \partial H_2)\ge D$ then the free product $H_1\ast H_2$ is PGF and embeds in $\Mod(S)$. This work was the motivation of our \Cref{main-Loa}.

\section{Bass--Serre trees and free products}
\label{s:cayley_trees}

Let $\mathcal{H} = \left\{H_1, \ldots, H_n\right\}$ be a family of nontrivial groups. In this section we will 
describe the Bass--Serre tree $T = T_{\mathcal{H}}$ associated to the abstract free product $H\colonequals H_1\ast \dots \ast H_n$; see \cite{ScottWall-TopologicalMethodsGrpThry}
for a general overview of Bass--Serre tree theory.

A natural graph of groups decomposition of the free product is given by the star graph $T_0$ (that is, the complete bipartite graph $K_{1,n}$) in which the internal vertex and the edges are all labeled by the trivial group, and the $n$ nodes are labeled by the given groups $H_1,\dots,H_n$. The associated Bass--Serre tree $T$ comes equipped with an action $H\curvearrowright T$ of the free product and an associated quotient map $T\to T_0$. 

The bipartite structure of $T_0$ lifts to a bipartite structure on $T$ in which we call lifts of the node vertices \define{type--1} and lifts of the central vertex \define{type--2}. Each type--2 vertex has trivial stabilizer, and each type--1 vertex has stabilizer equal to a conjugate $gH_i g\inv$ of the corresponding node vertex.
By the orbit-stabilizer theorem, the type--2 vertices are in bijective correspondence with $H$ itself and the type--1 vertices mapping to the $H_i$ node in $T_0$ are in correspondence with the set $H/H_i$ of cosets of $H_i$. Accordingly, we may label the vertices of $T$ as follows: 
\begin{itemize}
	\item The {\em type}--$1$ vertex with stabilizer $gH_ig\inv$ is labeled by the coset $gH_i$.
	\item The {\em type}--$2$ vertices are vertices labeled by group elements $g\in H$.
\end{itemize}
We write ${\bf v}(\ast)$ for the vertex of $T$ with label $\ast$. 
The vertex ${\bf v}(gH_i)$ thus has valence equal to $\abs{H_i}$ and is connected precisely to the vertices labeled by the elements of the coset $gH_i$. 
Correspondingly, the vertex ${\bf v}(g)$ has valence $n$ and is connected precisely to the vertices ${\bf v}(gH_1),\dots,{\bf v}(g H_n)$; see \Cref{fig:BassSerreTree} below. 

\begin{figure}[h]
	\centering

	\includegraphics[height=7cm]{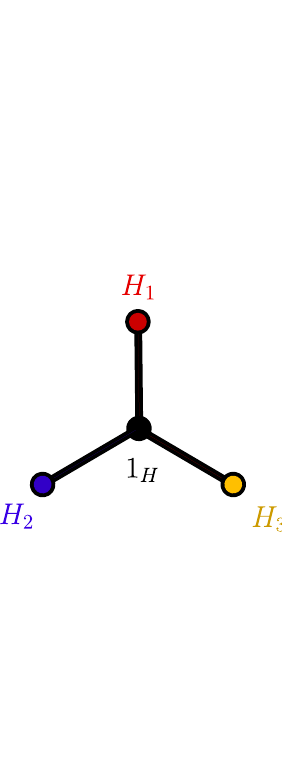}
	\hspace{2em}
	\includegraphics[height=7cm]{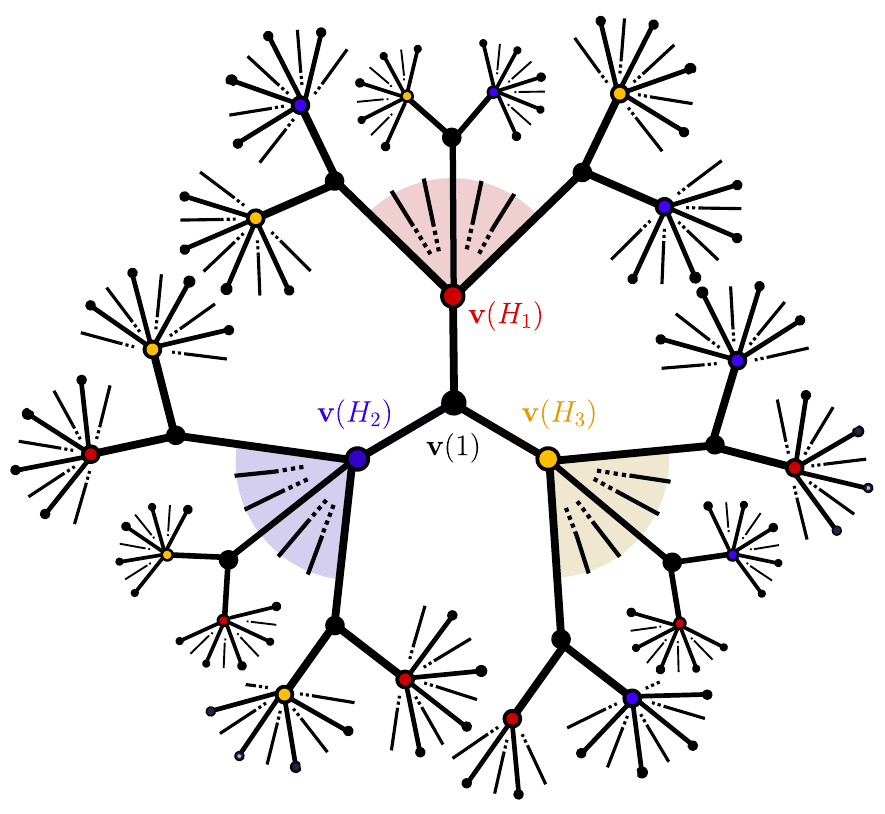}

	\caption{
	A graph of groups decomposition for $H = H_1\ast H_2 \ast H_3$. On the left, we see the star graph $T_0$; on the right, the Bass--Serre $T$ associated to the free product. 
	Each black type--1 vertex corresponds to an element of the group with the identity element ${\bf v}(1)$ labeled in the center. Each red (resp. blue, yellow) type--2 vertex corresponds to a coset of the form ${\bf v}(gH_1)$ (resp. ${\bf v}(gH_2)$, ${\bf v}(gH_3)$). 
}
	\label{fig:BassSerreTree}
	\label{fig:GraphOfGroups}
\end{figure}

Observe that the action of the free product $H = H_1\ast \dots \ast H_n$ of the tree $T$ satisfies the \Cref{def:rel_hyp} of relative hyperbolicity. In particular, by \Cref{rem:bowditch_tree_qi_coned-off_Cayley} it is equivariantly quasi-isometric to the coned-off Cayley graph.

\subsection{Free products in mapping class groups}
\label{sec:free_products_in_MCG}
Now let us suppose that our groups $H_1,\dots, H_n$ are in fact infinite, reducible subgroups of the mapping class group. That is, for each $1\leq i\leq n$ we have an inclusion $H_i\to \Mod(S)$. By the universal property of the free product, these determine a morphism
\[\Phi\colon H\to \Mod(S)\]
whose image is the subgroup $G = \langle H_1,\dots,H_n\rangle\le\Mod(S)$ they generate.

The homomorphism $\Phi$ induces an action of $H$ on the set of curves, multicurves, and subsurfaces of $S$; namely, if $\alpha$ is a (multi)curve or subsurface of $S$, then we write 
$g\cdot\alpha = \Phi(g)\alpha$. 
Note that when restricted to $\cc(S)$, this $H$--action is by isometries.
This action allows us to define an $H$--equivariant map
\[\phi\colon T\to \cc(S)\]
as follows: Send the type--1 vertex ${\bf v}(g H_i)$ to the reducing system $\partial\Phi(gH_ig\inv) = g\cdot\partial H_i$ of the image of the corresponding stabilizer subgroup $g H_i g\inv$. 
Then fix a curve $\xi$ in $\cc(S)$ to be the image ${\bf v}(1)$ and, by equivariance, define $\phi$ to send the type--2 vertex ${\bf v}(g)$ to the curve $g\cdot \xi$. Note that $\phi$ is only a coarse map since the image of a type--1 vertex ${\bf v}(gH_i)$ is an entire multicurve $g\cdot \partial H_i$ of diameter at most 1: this is necessary since the stabilizer $gH_ig\inv$ must fix the image but may permute the components of $g\cdot\partial H_i$.

Since $T$ is $H$--equivariantly quasi-isometric to the coned-off Cayley graph of $H$, in order to prove $G = \langle H_1,\dots, H_n\rangle$ is an RGF subgroup of $\Mod(S)$, it suffices to show $G$ is isomorphic to $H$ and that our map $\phi\colon T\to \cc(S)$ is a quasi-isometric embedding. We record this in the following lemma:

\begin{lemma}
\label{lem:qi-to-CS-implies_RGF}
Let $\mathcal{H} = \{H_1,\dots, H_n\}$ be a family of infinite reducible subgroups $H_i\le \Mod(S)$. Let $G = \langle H_1,\dots, H_n\rangle\le \Mod(S)$ be the subgroup they generate, $T$ the Bass--Serre tree for the abstract free product $H = H_1\ast \dots \ast H_n$, and $\phi\colon T\to \cc(S)$ the map defined above. If there exists a constant $\kappa >0$ such that
\[d_S(\phi(v), \phi(v')) \ge \tfrac{1}{\kappa} d_T(v, v')-\kappa\]
for all type--1 vertices $v,v'$ of $T$, then $\Phi\colon H\to G$ is an isomorphism and $G$ is reducibly geometrically finite relative to  the collection $\mathcal{H}$. Furthermore, every element of $G$ which is not conjugate into some $H_i$ is pseudo-Anosov.
\end{lemma}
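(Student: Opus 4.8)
The lemma asserts two things: (1) that the abstract map $\Phi\colon H\to G$ is an isomorphism, and (2) that $G$ is RGF relative to $\mathcal H$, with the bonus that elements outside the conjugates of the $H_i$ act as pseudo-Anosovs. The strategy is to leverage the hypothesized coarse lower bound on $\phi$ to transfer all the needed structure from the Bass--Serre tree $T$ to the curve graph $\cc(S)$. The key observation is that $\phi\colon T\to \cc(S)$ is automatically coarsely Lipschitz: adjacent vertices of $T$ map to sets of uniformly bounded diameter apart, since a type--$2$ vertex ${\bf v}(g)$ is sent to $g\cdot\xi$ and the adjacent type--$1$ vertex ${\bf v}(gH_i)$ is sent to $g\cdot\partial H_i$, and the pair $(\xi,\partial H_i)$ has some fixed diameter depending only on $\xi$ and $H_i$; equivariance and the fact that the $H$--action on $\cc(S)$ is by isometries then give a uniform bound. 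Combined with the hypothesized lower bound (which is stated for type--$1$ vertices but extends to all vertices after adjusting $\kappa$, again using that consecutive vertices are boundedly spaced), this shows $\phi$ is a quasi-isometric embedding on all of $T$.

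\textbf{Injectivity of $\Phi$.} First I would prove that $\Phi$ is injective, hence an isomorphism onto $G$. Suppose $g\in H$ is nontrivial; I want $\Phi(g)\ne\id$. Since the $H$--action on $T$ is the standard free-product action, a nontrivial $g$ either fixes no type--$2$ vertex or translates along the tree; in either case $d_T({\bf v}(1),{\bf v}(g)) = d_T({\bf v}(1), g\cdot{\bf v}(1))$ is large when $g$ is a long word (and the action on type--$2$ vertices is free by construction). The quasi-isometric embedding $\phi$ then forces $d_S(\xi, \Phi(g)\cdot\xi)= d_S(\phi({\bf v}(1)),\phi({\bf v}(g)))$ to be large, so $\Phi(g)\cdot\xi\ne\xi$ and in particular $\Phi(g)\ne\id$. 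This handles all $g$ whose word length is large enough that the quasi-isometry constant guarantees a positive displacement; to cover the finitely many short words one argues directly that distinct cosets have distinct (far-apart) images, so no short nontrivial element can act trivially either. Thus $\ker\Phi$ is trivial and $\Phi\colon H\xrightarrow{\cong} G$.

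\textbf{RGF and the pseudo-Anosov claim.} With $\Phi$ an isomorphism, $G\cong H = H_1\ast\cdots\ast H_n$ acts on $T$ exactly as in \Cref{s:cayley_trees}, and this action satisfies \Cref{def:rel_hyp}: the type--$2$ vertices have trivial stabilizer, the type--$1$ stabilizers are precisely the conjugates of the $H_i$, and the edge-and-circuit finiteness conditions hold for a tree. Hence $G$ is hyperbolic relative to $\mathcal H$. Condition (2) of \Cref{def:RGF} follows from \Cref{rem:bowditch_tree_qi_coned-off_Cayley}: $T$ is $G$--equivariantly quasi-isometric to the coned-off Cayley graph $\widehat\Gamma(G;\mathcal H)$, and $\phi\colon T\to\cc(S)$ is the $G$--equivariant quasi-isometric embedding we just established, so composing yields the required equivariant quasi-isometric embedding of $\widehat\Gamma(G;\mathcal H)$ into $\cc(S)$. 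This gives RGF. For the final assertion, let $\Phi(g)$ be any element not conjugate into some $H_i$; then $g\in H$ is not conjugate into a free factor, so $g$ acts on the tree $T$ as a loxodromic (hyperbolic) isometry with an axis. Pushing the axis forward by the quasi-isometric embedding $\phi$ shows the $\langle g\rangle$--orbit in $\cc(S)$ is a quasi-geodesic, so $\Phi(g)$ acts loxodromically on $\cc(S)$; by the Masur--Minsky classification of mapping classes by their action on the curve graph (loxodromic $\iff$ pseudo-Anosov), $\Phi(g)$ is pseudo-Anosov.

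\textbf{Main obstacle.} The genuinely delicate point is not any single step above but establishing cleanly that the hypothesized lower bound, stated only for \emph{type--$1$} vertices, upgrades to a two-sided quasi-isometric embedding on \emph{all} of $T$, and that this embedding is compatible with the bipartite structure so that a tree-axis maps to a curve-graph quasi-geodesic. The subtlety is that $\phi$ is only a coarse map (type--$1$ vertices go to multicurves of diameter up to $1$, and the stabilizer may permute the components), so one must be careful that the coarse Lipschitz upper bound and the interpolation between type--$1$ and type--$2$ vertices do not interact badly with the lower bound; I expect this bookkeeping — rather than any conceptual difficulty — to constitute the bulk of the work.
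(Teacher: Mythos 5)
Your overall route is the same as the paper's: show $\phi$ is an equivariant quasi-isometric embedding of $T$ (coarse Lipschitz upper bound via equivariance, lower bound extended from type--1 to all vertices), identify $T$ with the coned-off Cayley graph as in \Cref{rem:bowditch_tree_qi_coned-off_Cayley}, and get the pseudo-Anosov claim from loxodromic dynamics on $T$; all of those pieces are fine. The genuine gap is in your injectivity step. Your displacement argument uses the fixed basepoint ${\bf v}(1)$, whose displacement under $g$ equals twice the reduced syllable length of $g$; this is indeed large for long words, but your patch for the remainder --- ``to cover the finitely many short words'' --- fails, because the factors $H_i$ are \emph{infinite} groups: there are infinitely many nontrivial elements of syllable length $1$ (all of $\bigcup_i H_i\setminus\{1\}$), each of which displaces ${\bf v}(1)$ by exactly $2$, so the quasi-isometry constant gives no information about them no matter how the bookkeeping goes. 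Your fallback, that ``distinct cosets have distinct (far-apart) images,'' is also not available: the hypothesis $d_S(\phi(v),\phi(v'))\ge \tfrac{1}{\kappa}d_T(v,v')-\kappa$ is vacuous for type--1 vertices at tree-distance $2$ (the right-hand side is negative once $\kappa\ge 2$), so a priori nearby cosets could have identical images in $\cc(S)$.

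The repair is easy, and the paper's version is worth noting. Either observe (i) that $\Phi$ restricts to the honest inclusion $H_i\hookrightarrow\Mod(S)$ on each factor, so every element conjugate into a factor has nontrivial image for free, while a loxodromic $g$ of short syllable length satisfies $d_T({\bf v}(1),g^n\cdot{\bf v}(1))\to\infty$, so $\Phi(g^n)\ne 1$ for large $n$ and hence $\Phi(g)\ne 1$; or (ii) use the paper's uniform trick: instead of a fixed basepoint, for each nontrivial $h$ choose a type--1 vertex $v$ far from the minset of $h$, so that $d_T(v,h\cdot v)>\kappa(\kappa+1)$ regardless of whether $h$ is elliptic or loxodromic, whence $d_S(\phi(v),\Phi(h)\phi(v))>1$. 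This second route also surfaces a detail you elide: since $\phi$ is only a coarse map, one must convert ``$\phi(v)$ is moved a large distance'' into ``$\Phi(h)\ne 1$,'' which the paper does by noting that $\phi(v)$ is a multicurve of diameter at most $1$, so $d_S(\phi(v),\Phi(h)\phi(v))>1$ forces $\Phi(h)\phi(v)\ne\phi(v)$. With either repair in place, the rest of your argument goes through as written.
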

\begin{proof}
The lemma is clear when $n=1$, since then $H = H_1 = G$ is trivially RGF relative to itself. So we assume $n\ge 2$.

The map $\Phi$ is surjective by construction. To see it is injective, let $h\in H$ be any nontrivial element and consider its minset \[A =\left\{x\in T: d_T(x,h\cdot x) = \inf_{y\in T}d_T(y,h\cdot y)\right\}.\] There are two possibilities: either $h$ acts elliptically and $A$ consists of a single type--1 vertex (namely ${\bf v}(g H_i)$  iff $h\in g H_i g\inv$), or else $h$ acts loxodromically and $A$ consists of its bi-infinite translation axis. In either case, it is known that
for every $v\in T$ the geodesic $[v, h\cdot v]$ must pass through the minset $A$.  
Hence, by choosing a type--1 vertex $v$ sufficiently far from $A$ we may be assured that $d_T(v, h\cdot v)  > \kappa(\kappa+1)$. The hypothesis of the lemma then ensures
\[d_S(\phi(v), \Phi(h)\phi(v)) = d_S(\phi(v), \phi(h\cdot v))  \ge \tfrac{1}{\kappa} d_T(v, h\cdot v) -\kappa > 1.\]
By construction $\phi(v)$ is a reducing multicurve (associated to some reducible subgroup $\Phi(gH_ig\inv)$) of diameter at most $1$. Therefore the above inequality implies $\Phi(h)\phi(v)\ne \phi(v)$. In particular $\Phi(h)\ne 1$, proving that $\Phi$ is injective.

This proves $G$ is a free product and so hyperbolic relative to the subgroups $H_1,\dots, H_n$. By \Cref{rem:bowditch_tree_qi_coned-off_Cayley}, the Bass--Serre tree $T$ is thus quasi-isometric to the coned off Cayley Graph $\widehat\Gamma = \widehat\Gamma(G, \mathcal{H})$ defined in \S\ref{sec:rel_hyp}. Indeed, there is a bijection between the vertices of $\widehat{\Gamma}$ and $T$ that sends each coset vertex $gH_i$ of $\widehat{\Gamma}$ to the corresponding type--1 vertex ${\bf v}(gH_i)$ of $T$ and each regular vertex $g\in \Gamma$ to the type--2 vertex ${\bf v}(g)$ of $T$. This map is bi-Lipschitz, since each edge of $T$ maps to an edge of $\widehat{\Gamma}$ and each edge of $\widehat{\Gamma}$ either maps to an edge of $T$ (when it involves a coset vertex) or an path of bounded length $d_T({\bf v}(1), {\bf v}(x)) = d_T({\bf v}(g),{\bf v}(gx))$ when it is an edge $(g,gx)$ of $\Gamma$ labeled by an element $x$ of the finite generating set of $G$.

To prove $G$ is RGF it remains to show $\phi\colon T\to \cc(S)$ is a quasi-isometric embedding, which is easy: any adjacent vertices of $T$ have the form ${\bf v}(gH_i)$ and ${\bf v}(gh)$, for some $h\in H_i$, and are mapped to the subsets $g\cdot\partial H_i$ and $gh\cdot \xi$ of $\cc(S)$ at distance
\[d_S\left(g\cdot\partial H_i, gh\cdot\xi\right) = d_S(h\inv\partial  H_i, \xi) = d_S(\partial H_i, \xi). \]
Therefore $\phi$ is $\lambda$--Lipschitz for $\lambda = \max_{j} d_S(\partial H_j, \xi)$. Conversely, given any vertices $w_1,w_2$ of $T$ we may find type--1 vertices $v_1,v_2$ with $d_T(v_i,w_i)\le 1$ and conclude
\begin{align*}
d_S(\phi(w_1),\phi(w_2)) &\ge d_S(\phi(v_1),\phi(v_2)) -2\lambda\\
 &\ge \tfrac{1}{\kappa} d_T(v_1,v_2)-2\lambda 
\ge \tfrac{1}{\kappa} d_T(w_1,w_2) - 2(\tfrac{1}{\kappa}+\lambda).
\end{align*}

Finally, if $g\in G$ is not conjugate into some $H_i$, then $\Phi\inv(g)\in H$ does not fix any vertex of $T$ and so acts as a loxodromic isometry. 
Thus $d_T((\Phi\inv(g))^n\cdot v, v)\to \infty$ for each type--1 vertex of $T$. Consequently $d_S(g^n \phi(v), \phi(v))\to \infty$ as well, which means $g$ is pseudo-Anosov.
\end{proof}

\section{Displacing families and the proof of \Cref{main-subgroups}}\label{s:displacing}

The following property will be key to quasi-isometrically embedding the coned off Cayley graph for a family $\mathcal{H}$ of reducible subgroups.

\begin{definition}[Displacing]
\label{def:displacing}
A family $\mathcal{H}=\{H_1, \dots, H_n\}$ of infinite reducible subgroups is \textbf{$L$--displacing} if there are multicurves $\beta_1,\dots,\beta_n$ such that for all  $i\ne j \ne k$:
\begin{enumerate}
\item $H_j$ stabilizes $\beta_j$, that is $h \beta_j = \beta_j$ for all $h\in H_j$;
\item $d_S(\beta_i,\beta_j)\ge 5$; and 
\item for each nontrivial $h\in H_j$, there exists a subsurface $Y$ with $d_S(\partial Y,\beta_j)\le 1$ such that $d_Y(\beta_i, h \beta_k)\ge L$.
\end{enumerate}
\end{definition}
\begin{remark}
\label{rem:separated_implicit_in_displacing}
Note that this definition allows for $i=k$. 
\end{remark}

As a concrete example to illustrate this property, suppose each $H_i$ is generated by a collection of fully supported mapping classes on disjoint subsurfaces as in \Cref{remark:reducibleExamples}. If the translation length of each mapping class on its supporting domain are uniformly bounded below by the $L$ threshold (with negligible additive constants), and the collection is at least $5$--separated, then $\{H_i\}$ is $L$--displacing. In this case, we use $\beta_i = \partial H_i$ and for $h\in H_j$ the $Y$ subsurface can be taken as any of the supporting domains for the mapping class in its normal form. 

\begin{remark}
While it is often natural to take $\beta_i = \partial H_i$ in the definition of displacing, it is sometimes advantageous to be more flexible. This is the case in \Cref{prop:finite-index-displacing-family} below, where we start with a $5$--separated family $\{G_i\}$ and pass to infinite index subgroups $H_i\le G_i$. In this case $d_S(\partial G_i, \partial H_i)\le 1$, so the new family $\{H_i\}$ need only be $3$--separated, but taking the original reducing systems $\beta_i = \partial G_i$ for our multicurves allows the family $\{H_i\}$ to be displacing.
\end{remark}

The significance of the $L$-displacing property is captured by the following
theorem, which is the main result of this section.

\begin{theorem}\label{thm:displacing}
If $\Hc=\{H_1,\dots, H_n\}$ is a $(\bgit+4\bsi)$--displacing family of infinite reducible subgroups (where $\bgit$ is from \Cref{thm:BddGeodesicImage} and $\bsi$ from \Cref{thm:behrstock}), then $G = \langle H_1, \dots, H_n \rangle$ is isomorphic to $H_1\ast\dots\ast H_n$ and is RGF relative to $\mathcal{H}$. Further, every element of $G$ which is not conjugate into a factor $H_i$ is pseudo-Anosov.
\end{theorem}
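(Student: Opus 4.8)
The plan is to reduce the entire statement to the single sufficient condition isolated in \Cref{lem:qi-to-CS-implies_RGF}. That lemma says that if the equivariant map $\phi\colon T\to\cc(S)$ from the Bass--Serre tree of the abstract free product $H=H_1\ast\dots\ast H_n$ restricts to a quasi-isometric embedding on type--1 vertices, then $\Phi\colon H\to G$ is an isomorphism, $G$ is RGF relative to $\mathcal H$, and every element not conjugate into a factor is pseudo-Anosov. Thus all three conclusions follow at once, and the only thing to prove is the lower bound $d_S(\phi(v),\phi(v'))\ge \tfrac1\kappa d_T(v,v')-\kappa$ for type--1 vertices $v,v'$.

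I would fix type--1 vertices $w_0,w_m$ and examine the unique geodesic joining them in the bipartite tree $T$. It alternates type--1 and type--2 vertices, say $w_0,u_0,w_1,u_1,\dots,u_{m-1},w_m$, so that $d_T(w_0,w_m)=2m$. Writing $u_t={\bf v}(q_t)$, the structure of $T$ gives $w_{t-1}={\bf v}(q_{t-1}H_{j_{t-1}})$ and $w_t={\bf v}(q_{t-1}H_{j_t})$ as cosets of the \emph{same} element $q_{t-1}$, while $w_{t+1}={\bf v}(q_tH_{j_{t+1}})$ with $q_t=q_{t-1}h_t$ for some nontrivial $h_t\in H_{j_t}$; non-backtracking forces $j_{t-1}\ne j_t\ne j_{t+1}$. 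It is convenient to replace each $\phi({\bf v}(gH_i))=g\cdot\partial H_i$ by its proxy $g\cdot\beta_i$, which changes distances by at most a constant since $\beta_i\subset R(H_i)$ gives $d_S(\partial H_i,\beta_i)\le 1$.

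At each interior pivot $w_t$ I would extract the displacing subsurface. Conjugating by $q_{t-1}^{-1}$ moves $\phi(w_{t-1})$ to within $1$ of $\beta_{j_{t-1}}$, $\phi(w_t)$ to within $1$ of $\beta_{j_t}$, and $\phi(w_{t+1})$ to within $1$ of $h_t\beta_{j_{t+1}}$; the key simplification is that $w_{t-1}$ and $w_t$ share the basepoint $q_{t-1}$, so the incoming image collapses to the untranslated $\beta_{j_{t-1}}$ (here I also use $H_{j_{t-1}}\beta_{j_{t-1}}=\beta_{j_{t-1}}$). Applying \Cref{def:displacing}(3) with $j=j_t$, $h=h_t$, $i=j_{t-1}$, $k=j_{t+1}$ produces a subsurface $Z_t'$ with $d_S(\partial Z_t',\beta_{j_t})\le 1$ and $d_{Z_t'}(\beta_{j_{t-1}},h_t\beta_{j_{t+1}})\ge \bgit+4\bsi$. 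Setting $Z_t=q_{t-1}Z_t'$ and absorbing the two projection discrepancies (each at most $2$ by \eqref{eqn:multicure_bdd_projection}) gives $d_{Z_t}(\partial Z_{t-1},\partial Z_{t+1})\ge(\bgit+4\bsi)-4\ge\bgit+3\bsi$. Separately, \Cref{def:displacing}(2) together with $h_t\beta_{j_t}=\beta_{j_t}$ yields $d_S(\partial Z_t,\partial Z_{t+1})\ge 5-2=3$, so in particular $Z_t\pitchfork Z_{t+1}$.

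Finally I would feed $Z_1,\dots,Z_{m-1}$ into \Cref{cor:projections_persist}: the two displayed estimates are exactly its hypotheses, and its ``in particular'' clause yields $d_S(Z_1,Z_{m-1})\ge m-2$. Since $\phi$ is $\lambda$--Lipschitz (as shown within the proof of \Cref{lem:qi-to-CS-implies_RGF}) and each $\partial Z_s$ lies within $2$ of $\phi(w_s)$, this upgrades to
\[
d_S(\phi(w_0),\phi(w_m))\ge d_S(Z_1,Z_{m-1})-2(2\lambda+2)\ge\tfrac12 d_T(w_0,w_m)-\kappa
\]
for a suitable $\kappa$ (the estimate being trivial for small $m$), which is the required bound. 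I expect the main obstacle to be not any one estimate but the bookkeeping that turns the tree geodesic into this subsurface chain: correctly reading off the pivot element $h_t\in H_{j_t}$ at each turn, using the stabilization $H_i\beta_i=\beta_i$ so that the incoming contribution at a pivot is the bare multicurve $\beta_{j_{t-1}}$ rather than a translate, and controlling the additive errors tightly enough that the hypothesis constant $\bgit+4\bsi$ survives as the $\bgit+3\bsi$ demanded by \Cref{cor:projections_persist}.
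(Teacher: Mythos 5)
Your proposal is correct and follows essentially the same argument as the paper's proof: reduce to the lower bound of \Cref{lem:qi-to-CS-implies_RGF}, walk the alternating tree geodesic, use $H_i\beta_i=\beta_i$ to collapse the incoming translate at each pivot, extract the displacing subsurface with $h_t=q_{t-1}^{-1}q_t$, absorb the $\le 4$ projection error so that $\bgit+4\bsi$ becomes the $\bgit+3\bsi$ needed by \Cref{cor:projections_persist}, and conclude via its distance clause. The only (inessential) deviation is at the endpoints, where the paper appends annuli $Y_0,Y_r$ about components of $\beta_0',\beta_r'$ to run the chain from $0$ to $r$, while you instead bridge the first and last pivots using the Lipschitz constant of $\phi$; both yield the same linear lower bound.
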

\begin{proof}
Let $\beta_1,\dots,\beta_n$ be the multicurves promised in the \Cref{def:displacing} of displacing. Note that since $H_i$ stabilizes $\beta_i$ we necessarily have $\beta_i\subset R(H_i)$ and thus $d_S(\beta_i,\partial H_i)\le 1$.
As in \Cref{s:cayley_trees}, let $H = H_1\ast\dots \ast H_n$ be the abstract free product, $T$ the Bass--Serre tree for $H$, and $\phi\colon T\to \cc(S)$ the $H$--equivariant map. The theorem will follow directly from \Cref{lem:qi-to-CS-implies_RGF} provided we can find a constant $\kappa > 0$ so that for all type--1 vertices $v,v'$ of $T$ we have
\begin{equation}
\label{eqn:lower-bound-displacing-thm}
d_S(\phi(v), \phi(v')) \ge \frac{1}{\kappa} d_T(v,v') - \kappa.
\end{equation}

\begin{figure}[h]
\begin{tikzpicture}[scale=.85]
	\def\s{0.3}
	\coordinate[label=left:{\footnotesize$a_0$}, circle, fill=black, scale=\s] (0) at (-4.5,0);
	\coordinate[label=above:{\footnotesize$b_0 = \textbf{v}(g_0)$}, circle, fill=black, scale=\s] (1) at (-3.5,.5);
	\coordinate[label=below:{\footnotesize$a_{s-1}=\textbf{v}(g_{s-1}H_i)$}, circle, fill=black, scale=\s] (2) at (-2,-.5);
	\coordinate[label=above:{\footnotesize$b_{s-1}=\textbf{v}(g_{s-1})$}, circle, fill=black, scale=\s] (3) at (0,.5);
	\coordinate[label={[xshift=-5pt] below :\footnotesize $\textbf{v}(g_{s-1}H_j)=a_s =\textbf{v}(g_{s} H_j)$}, circle, fill=black, scale=\s] (4) at (2,0);
	\coordinate[label=above:{\footnotesize $b_{s}=\textbf{v}(g_s)$}, circle, fill=black, scale=\s] (5) at (4,.5);
	\coordinate[label={[xshift=10pt] below:\footnotesize$a_{s+1}=\textbf{v}(g_s H_k)$}, circle, fill=black, scale=\s] (6) at (6,-.5);
	\coordinate[label={[xshift=10pt] above:\footnotesize$b_{r-1}=\mathbf{v}(g_{r-1})$}, circle, fill=black, scale=\s] (7) at (7,.5) ; 
	\coordinate[label={right:\footnotesize$a_r$}, circle, fill=black, scale=\s] (8) at (8,0) ; 
	\draw[black] (0) -- (1);
	\draw[dotted] (1) -- (2);
	\draw[black] (2) -- (3);
	\draw[black] (3) -- (4);
	\draw[black] (4) -- (5);
	\draw[black] (5) -- (6);
	\draw[dotted] (6) -- (7);
        \draw[black] (7) -- (8);
	
\end{tikzpicture}
\caption{The notation for a path in the Bass--Serre tree from the proof of \Cref{thm:displacing}.}
\label{fig:paths_in_thm_displacing}
\end{figure}

To that end, choose any type--1 vertices $v,v'\in T$ and consider the geodesic between them: 
If $d_T(v,v')=2r$ this is an alternating sequence $v = a_0,b_0, a_1, b_1, \dots,b_{r-1},a_r =v'$ of type--1 vertices $a_s$ and type--2 vertices $b_s$.
Each type--2 vertex $b_s$ has the form $b_s = {\bf v}(g_s)$ for some unique element $g_s\in H$.
Similarly, 
each type--1 vertex $a_s$ corresponds to a unique coset $a_s= {\bf v}(g H_j)$ for some index $j\in \{1,\dots, n\}$. Accordingly, let us define $\beta_s' = g\cdot\beta_j$. 
This is well-defined since if $g H_j = g' H_j$ then $g\inv g'\in H_j$ stabilizes $\beta_j$ and so $g\cdot \beta_j = g(g\inv g')\cdot\beta_j = g'\cdot\beta_j$. By the definition of $\phi$ we also note that 
\begin{equation}
\label{eqn:displacing0}
d_S(\beta_s', \phi(a_s)) = d_S(g\cdot \beta_j, g\cdot \partial H_j) \le 1.
\end{equation}

Now fix $0 < s < r$ and observe that the cosets labeling type--1 vertices $a_{s-1},a_s,a_{s+1}$ evidently contain the elements $g_{s-1}$ or $g_{s}$ (see \Cref{fig:paths_in_thm_displacing}); thus there are indices $i\ne j \ne k$ in $\{1,\dots, n\}$ so that $a_{s-1} = {\bf v}(g_{s-1}H_i)$ and $a_s = {\bf v}(g_{s-1}H_j) = {\bf v}(g_sH_j)$ and $a_{s+1} = {\bf v}(g_sH_k)$. By the preceding paragraph, we see that $\beta_s' = g_s\cdot \beta_j$ and $\beta_{s+1}' = g_s \cdot\beta_k$. 
Hence the definition of displacing implies:
\begin{equation}
\label{eqn:displacing1}
d_S(\beta_s', \beta_{s+1}') = d_S(g_s\cdot \beta_j, g_s\cdot \beta_k) \ge 5.
\end{equation}

Note also that $g_{s-1}\inv g_s\in H_j$ since $g_{s-1} H_j = g_s H_j$. Since $\mathcal{H}$ is displacing, we are thus provided a subsurface $Y$ with $d_S(\partial Y, \beta_j)\le 1$ so that 
\begin{equation}
\label{eqn:displacing2}
\bgit+4\bsi \le d_Y(\beta_i, (g\inv_{s-1} g_{s})\cdot \beta_k) = d_{g_{s-1}\cdot Y} (g_{s-1}\cdot \beta_i, g_{s}\cdot\beta_k) = d_{Y_s} (\beta_{s-1}', \beta_{s+1}'),
\end{equation}
where here and henceforth we write $Y_s = g_{s-1}\cdot Y$. 
To round out the notation, let $Y_0$, $Y_r$ denote any annulus corresponding to a component of $\beta_0'$ and $\beta_r'$
respectively.
In this way, we obtain a sequence of subsurfaces $Y_0,\dots, Y_r$ and multicurves $\beta_0',\dots, \beta_r'$ with $d_S(\partial Y_s, \beta_s')\le 1$ for each $s$.  It now follows from \Cref{eqn:displacing1} that $d_S(Y_s, Y_{s+1})\ge 3$. Moreover, \Cref{eqn:multicure_bdd_projection}
 and \Cref{eqn:displacing2} imply
\[d_{Y_s}(Y_{s-1}, Y_{s+1}) \ge d_{Y_s}(\beta_{s-1}',\beta_{s+1}') - 4 \ge \bgit+3\bsi.\]
Therefore the subsurfaces $Y_0,\dots, Y_r$ satisfy \Cref{cor:projections_persist} and we may conclude $d_S(Y_0, Y_r) \ge r$ and thus $d_S(\beta_0',\beta_r')\geq r-2$. 
Therefore, applying \Cref{eqn:displacing0}
establishes the lower bound required in \Cref{eqn:lower-bound-displacing-thm}:
\[ 
d_S(\phi(v),\phi(v')) = d_S(\phi(a_0),\phi(a_r)) \ge d_S(\beta_0',\beta_r') - 2 \ge r-4 = \tfrac{1}{2}d_T(v,v') - 4.\qedhere
\]
\end{proof}

\subsection{Passing to finite index}

To apply \Cref{thm:displacing}, it is natural to look for conditions that imply that a collection $\Hc$ of reducible subgroups is displacing, or to have a ready source of examples. The following lemma is the key tool we will use to construct such examples.

\begin{lemma}
\label{lem:displacing_for_one_group}
Let $G\le \Mod(S)$ be an infinite reducible subgroup. For any $L> 0$ and marking $\mu$, there exists a finite-index subgroup $G' \le G$ such that for each nontrivial $g\in G'$ there is a subsurface $Y$ which is disjoint from $\partial G' = \partial G$ for which
\[d_Y(g\mu, \mu)\geq L.\]
\end{lemma}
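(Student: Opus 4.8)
The plan is to reduce to a structurally nice finite-index subgroup, show that the set of elements \emph{failing} the conclusion is finite, and then delete those finitely many elements using residual finiteness of $\Mod(S)$.

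First I would pass to the finite-index subgroup $G_1 = G\cap P$, where $P\le \Mod(S)$ is a pure finite-index subgroup as furnished by \cite{Ivanov-SubgroupsTeichmullerModularGroups,BirmanLubotzkyMcCarthy}. Every element of $G_1$ then fixes each component of the canonical reducing system $\partial G$ (nonempty by \Cref{lem:reducing_multicurve}) and each complementary component, acting on each complementary piece as either the identity or a pseudo-Anosov. Writing $Y_1,\dots,Y_p$ for these pieces, this yields restriction homomorphisms $\rho_i\colon G_1\to \Mod(Y_i)$ whose product $\rho=(\rho_1,\dots,\rho_p)$ has kernel $K=\ker\rho$ equal to the free abelian group of multitwists about $\partial G$ contained in $G_1$. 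The key structural observation is that every domain of a pure $g\in G_1$ is disjoint from $\partial G$: the pseudo-Anosov domains are complementary pieces $Y_i$, while each twisting curve lies either inside some $Y_i$ or is a component of $\partial G$ (whose annular neighborhood has core in $\partial G$, hence counts as disjoint from $\partial G$ in the sense of the displacing definition).

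Next I would define the bad set $B=\{g\in G_1: d_Y(g\mu,\mu)<L \text{ for every subsurface } Y \text{ disjoint from } \partial G\}$ and prove that $B$ is finite. For each piece $Y_i$ and each subsurface $W\subseteq Y_i$, the projection $d_W(g\mu,\mu)$ is coarsely equal to $d_W(\rho_i(g)\mu_i,\mu_i)$, where $\mu_i=\pi_{Y_i}(\mu)$; since all such $W$ are disjoint from $\partial G$, an element $g\in B$ has all of these projections bounded, so the Distance Formula (\Cref{thm:DistanceFormula}) applied in $\Mod(Y_i)$ forces $\rho_i(g)$ into a fixed finite ball. Hence $\rho(B)$ lies in a finite subset of $\prod_i\Mod(Y_i)$. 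Separately, for each component $c$ of $\partial G$ the annulus $A$ about $c$ is disjoint from $\partial G$, so $g\in B$ also has bounded projection $d_A(g\mu,\mu)$, which confines the twisting coordinate of $g$ about $c$ to a bounded range. Since the twisting coordinates are injective on the free abelian kernel $K$ and take values in a discrete set, each fiber of $\rho$ meets $B$ in finitely many elements; together with finiteness of $\rho(B)$ this shows $B$ is finite.

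Finally, since $\Mod(S)$ (hence $G_1$) is residually finite, I can choose a finite-index subgroup $G'\le G_1$ avoiding the finitely many nontrivial elements of $B$. As $G'\le G$ has finite index, \Cref{rem:reducing_system_for_subgroups} gives $\partial G'=\partial G$, and by construction every nontrivial $g\in G'$ lies outside $B$, hence admits a subsurface $Y$ disjoint from $\partial G=\partial G'$ with $d_Y(g\mu,\mu)\ge L$, as required. The main obstacle is the finiteness of $B$: one must simultaneously control the pseudo-Anosov behavior on the complementary pieces (via the distance formula, crucially using that \emph{every} subsurface of a piece is disjoint from $\partial G$) and the twisting about $\partial G$ (via annular projections), and then check that these two pieces of data pin down an element of $G_1$ up to finite ambiguity. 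A secondary technical point to verify carefully is the coarse equality $d_W(g\mu,\mu)\approx d_W(\rho_i(g)\mu_i,\mu_i)$ for $W\subseteq Y_i$, namely that boundary twisting about $\partial Y_i$ does not alter interior subsurface projections.
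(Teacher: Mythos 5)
Your proposal is correct in outline, but it takes a genuinely different — and substantially heavier — route than the paper's. The paper avoids all structure theory: it fixes an auxiliary marking $\mu'$ containing $\partial G$ to obtain a uniform bound $d_W(\mu,\partial G)\le K$ over \emph{all} subsurfaces $W$, applies the Distance Formula (\Cref{thm:DistanceFormula}) on $S$ itself with threshold $J=L+3K+J_0$, and uses residual finiteness only in the standard way, to find a finite-index $\Gamma\le\Mod(S)$ whose nontrivial elements have word length at least $2D$. Then for $G'=\Gamma\cap G$ every nontrivial $g$ admits \emph{some} subsurface $Y$ with $d_Y(g\mu,\mu)\ge L+3K$, and a three-line triangle inequality — using only that $g\partial G=\partial G$ \emph{setwise}, so that $d_Y(g\mu,\partial G)=d_Y(g\mu,g\partial G)=d_{g^{-1}Y}(\mu,\partial G)\le K$ — shows that any such $Y$ is automatically disjoint from $\partial G$. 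This trick renders your entire apparatus unnecessary: no pure subgroup $G_1$ (indeed the paper never needs elements to fix components of $\partial G$, only to preserve it, which holds for all of $G$ by canonicity), no cutting homomorphism $\rho$, no distance formulas on the pieces, no projection-comparison lemma $d_W(g\mu,\mu)\approx d_W(\rho_i(g)\mu_i,\mu_i)$, and no finiteness-of-the-bad-set argument — your set $B$ is handled implicitly, since it is contained in the ball of radius $2D$ that $\Gamma$ avoids. Your route does work: the fiber argument is sound, since two elements $gk_1,gk_2\in B$ in one fiber of $\rho$ satisfy $d_{A_c}(\mu,k_1^{-1}k_2\mu)<2L$ for the annulus $A_c$ about each component $c$ of $\partial G$ (note $g$ and the $k_i$ preserve $A_c$), pinning $k_1^{-1}k_2$ to a finite subset of the free abelian kernel; what your approach buys is extra structural information — an explicit picture of where the large projection must live — at the cost of the genuinely technical cutting comparison you rightly flag. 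One sentence in your writeup is false, though harmlessly so: elements of $G_1=G\cap P$ need \emph{not} act on each complementary piece of $\partial G$ as the identity or a pseudo-Anosov, because $\partial G$ need not contain the canonical reduction system of an individual element, so $\rho_i(g)$ can be a reducible mapping class of $Y_i$. Your argument never uses that claim (it uses only the restriction homomorphisms and the kernel description), so the proof survives, but the sentence should be deleted or corrected.
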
 

\begin{proof}
Let $\mu'$ be a marking on $S$ containing the reducing system $\partial G$. It follows from the Distance Formula \Cref{thm:DistanceFormula} (or, more accurately, the marking-complex version \cite[Theorem 6.12]{MasurMinsky-CurveComplexII}) that there is a bound $K$ (depending on $\mu$ and $\mu'$) so that $d_W(\mu,\partial G) \le d_W(\mu,\mu')\le K$ for every subsurface $W$ of $S$.

Fix a generating set $X$ on $\Mod(S)$. Let $J_0$ be the constant from \Cref{thm:DistanceFormula} and apply that theorem with threshold $J = L + 3K + J_0$ and marking $\mu$ to obtain a quasi-isometry constant $D\ge 1$ for the Distance Formula. Since $\Mod(S)$ is residually finite \cite{Grossman-ResidualFinitenessMCG}, there is a finite-index subgroup $\Gamma\le \Mod(S)$ such that each nontrivial $f\in \Gamma$ has word length $\abs{f}_X \ge 2D$.

We claim that the finite-index subgroup $G' = \Gamma\cap G$ satisfies the conclusion. Indeed, if $g\in G'$ then $\abs{g}_X > D$ and hence the distance formula implies there must be some subsurface $Y$ with $d_Y(g\mu,\mu)\ge J \ge L+3K$ (for else the right hand side of the formula would be zero, and $\abs{g}_X\le D$). It must also be that $Y$ is disjoint from $\partial G$, since otherwise $\pi_Y(\partial G)\ne \emptyset$ and the triangle inequality would yield this absurdity:
\begin{align*} 
3K <  d_Y(g\mu, \mu)&\leq d_Y(g\mu, \partial G)+d_Y(\partial G, \mu)\\
                   &=d_Y(g\mu, g\partial G)+d_Y(\partial G, \mu) \quad\text{   (because $\partial G$ is fixed by $g\in G$)}\\
                   &=d_{g^{-1} Y}(\mu, \partial G)+d_{Y}(\partial G, \mu)\leq 2K.
\end{align*}
Finally, since $G'$ is finite-index in $G$, 
applying \Cref{rem:reducing_system_for_subgroups} gives $\partial G' = \partial G$.
\end{proof}

With this lemma in hand it is straight forward to prove the following proposition, which says the $L$--displacing property can always be achieved by passing to finite index subgroups, provided the original family is sufficiently separated:

\begin{definition}[Separated]
\label{def:separated} 
A family $\Hc = \{G_1,\dots,G_n\}$  of infinite, reducible subgroups $G_i \le \Mod(S)$ is \textbf{$D$--separated} if $d_S(\partial G_i , \partial G_j)\ge D$ for all $i\ne j$.
\end{definition}

\begin{proposition}
\label{prop:finite-index-displacing-family}
Let $\Gc = \{G_1, \dots, G_n\}$ be a $5$--separated family of infinite, reducible subgroups $G_i\le \Mod(S)$. 
Then for any $L >0$, there exist finite-index subgroups $G_i'\le G_i$ so that for any further subgroups $H_i \le G_i'$ which are still infinite, the family $\Hc=\{H_1, \dots. H_n\}$ is $L$--displacing.
\end{proposition}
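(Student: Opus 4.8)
The plan is to verify the three conditions of \Cref{def:displacing} directly, using the multicurves $\beta_i = \partial G_i$ and reducing the only substantive condition to the single-group statement of \Cref{lem:displacing_for_one_group}. Conditions (1) and (2) will be essentially immediate: since $H_j\le G_j'\le G_j$ and $\beta_j=\partial G_j$ is a reducing multicurve for $G_j$ by \Cref{lem:reducing_multicurve}, every element of $H_j$ stabilizes $\beta_j$; and $d_S(\beta_i,\beta_j)=d_S(\partial G_i,\partial G_j)\ge 5$ is exactly the $5$--separation hypothesis. (Note also that each $H_i$ is infinite by assumption and reducible as a subgroup of the reducible group $G_i$, so $\{H_1,\dots,H_n\}$ is a legitimate family to which the definition applies.) The real content is condition (3), and there the strategy is to run \Cref{lem:displacing_for_one_group} on each $G_j$ with a single auxiliary marking and an inflated threshold, then convert the resulting large projection $d_Y(h\mu,\mu)$ into a large projection $d_Y(\beta_i,h\beta_k)$.

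First I would fix any marking $\mu$ and record a uniform projection bound. For each $i$, choosing a marking $\mu^{(i)}$ containing $\beta_i=\partial G_i$, the same distance-formula argument used inside the proof of \Cref{lem:displacing_for_one_group} produces a constant $K$, depending only on $\mu$ and the groups $G_i$ (and not on any finite-index subgroup), such that $d_W(\mu,\beta_i)\le d_W(\mu,\mu^{(i)})\le K$ for every subsurface $W$ and every $i$. With $K$ in hand I would set the threshold $L'=L+2K$ and apply \Cref{lem:displacing_for_one_group} to each $G_j$ (with marking $\mu$ and threshold $L'$) to obtain the finite-index subgroups $G_j'\le G_j$. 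Crucially, $K$ and hence $L'$ are determined \emph{before} passing to subgroups, so this ordering is legitimate.

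To verify (3), take any infinite $H_j\le G_j'$, any nontrivial $h\in H_j$, and the subsurface $Y$ supplied by \Cref{lem:displacing_for_one_group}: it is disjoint from $\beta_j$ (so $d_S(\partial Y,\beta_j)\le 1$) and satisfies $d_Y(h\mu,\mu)\ge L'=L+2K$. The bound then follows from the chaining inequality
\[ d_Y(h\mu,\mu)\le d_Y(\mu,\beta_i)+d_Y(\beta_i,h\beta_k)+d_Y(h\beta_k,h\mu), \]
together with $d_Y(\mu,\beta_i)\le K$ and $d_Y(h\beta_k,h\mu)=d_{h^{-1}Y}(\mu,\beta_k)\le K$, where the latter uses $h\beta_j=\beta_j$ (so $h^{-1}Y$ is again within distance $1$ of $\beta_j$) and the uniform bound $K$. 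Rearranging yields $d_Y(\beta_i,h\beta_k)\ge L'-2K=L$, as required, and this holds for all $i\ne j\ne k$ (with $i=k$ permitted).

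The one point requiring care — and the only place I expect genuine friction — is confirming that all four projections in the chaining inequality are nonempty, so that the inequality is valid. This is where the $5$--separation is used a second time: since $d_S(\partial Y,\beta_j)\le 1$ while $d_S(\beta_j,\beta_i)\ge 5$, we get $d_S(\partial Y,\beta_i)\ge 4\ge 2$, forcing $\beta_i$ to overlap $Y$ and hence $\pi_Y(\beta_i)\ne\emptyset$; the identical estimate applied to $h^{-1}Y$ (also within distance $1$ of $\beta_j$) gives $\pi_{h^{-1}Y}(\beta_k)\ne\emptyset$ and thus $\pi_Y(h\beta_k)\ne\emptyset$, while markings always project nonemptily. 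With these nonemptiness checks in place the chaining inequality holds, completing the verification that $\{H_1,\dots,H_n\}$ is $L$--displacing with multicurves $\beta_i=\partial G_i$.
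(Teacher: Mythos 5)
Your proposal is correct and follows essentially the same route as the paper's proof: the same multicurves $\beta_i=\partial G_i$, the same application of \Cref{lem:displacing_for_one_group} with a threshold inflated by the uniform marking-projection bound $K$ fixed \emph{before} passing to finite index, and the same triangle-inequality conversion to $d_Y(\beta_i,h\beta_k)\ge L$. The only differences are cosmetic — you bound $d_W(\mu,\beta_i)\le K$ directly with threshold $L+2K$ where the paper routes through markings $\mu_i\supset\partial G_i$ with threshold $L+5K$ — and your explicit nonemptiness check for $\pi_Y(\beta_i)$ and $\pi_Y(h\beta_k)$ via $5$--separation is a welcome point the paper leaves implicit.
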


\begin{proof}
For each $1\leq i\leq n$, let $\mu_i$ be a marking of $S$ containing $\partial G_i$. Set $\mu = \mu_1$.
By the marking complex version of the distance formula \cite[Theorem 6.12]{MasurMinsky-CurveComplexII}, there is a bound $K\ge 1$ so that $d_W(\nu,\nu') \le K$ for every subsurface $W$ and all $\nu,\nu'\in \{\mu=\mu_1,\dots, \mu_n\}$.

For the given constant $L > 0$, apply \Cref{lem:displacing_for_one_group} to each group $G_i$. Doing so, one obtains a finite-index subgroup $G_i'\le G_i$ with the property that for every $g\in G'_i$ there is some subsurface $Y$ disjoint from $\partial G_i$ with $d_Y(g\mu, \mu) \ge L + 5K$. 

We claim that for any infinite subgroups $H_i \le G'_i$, the family $\mathcal{H} = \{H_1,\dots,H_n\}$ is $L$--displacing, as desired. Indeed, we use the multicurves $\beta_i = \partial G_i$. Then the $5$--separation assumption on $\mathcal{G}$ gives $d_S(\beta_i,\beta_j)\ge 5$ for each $i\ne j$, and the containment $H_i\le G'_i\le G_i$ ensures $H_i$ stabilizes $\beta_i$. For the final condition of \Cref{def:displacing}, fix any indices $i\ne j\ne k$ and nontrivial element $h\in H_j$. Since $h\in H_j \le G'_j$, there is a subsurface $Y$ disjoint from $\partial G_j=\beta_j$ so that $d_Y(h \mu,\mu)\ge L+5K$ by the preceding paragraph. Since 
\[d_Y(\mu,\mu_i) \le K\qquad\text{and}\qquad d_{Y}(h\mu,h\mu_k) = d_{h\inv Y}(\mu,\mu_k)\le K\]
by our choice of $K$, we have $d_Y(\mu_i, h\mu_k) \ge L+3K$ by the triangle inequality. Since $\beta_l = \partial G_l$ is a subset of $\mu_l$ for $1\leq l\leq n$, we also have $d_Y(\beta_l,\mu_l)\le d_Y(\mu_l,\mu_l)\le K$ for $l\in \{i,k\}$.
Thus we conclude the required condition for displacement:
\[d_Y(\beta_i, h \beta_k) \ge d_Y(\mu_i, h\mu_k) -2K \ge L+K > L.\qedhere\]
\end{proof}

\Cref{main-subgroups} from the introduction is now an immediate consequence of \Cref{prop:finite-index-displacing-family} (applied with constant $L = \bgit + 4\bsi$) and \Cref{thm:displacing}.

\section{Right-angled Artin subgroups and the proof of \Cref{main-CLM}}
\label{sec:RAAGS}
We now turn to the task of constructing RGF right-angled Artin subgroups of the mapping class group.

	\subsection{Right-angled Artin groups and normal form} \label{ssub:RAAG_FreeProd}

Recall that a right-angled Artin group is specified by a \emph{presentation graph} $\Gamma$ with vertices $\{x_i\}_{i=1}^n$ and a symmetric set of edges $E\subset \{(x_i,x_j)\mid i\neq j\}$, so that $A(\Gamma)=\< x_1, \dots, x_n \mid [x_i,x_j] \text{ if } (x_i,x_j)\in E\>.$
Each \textit{subgraph} $\Gamma'$ of $\Gamma$ induces a subgroup $A(\Gamma')\leq A(\Gamma)$. Neither $\Gamma$ nor its subgraphs are assumed to be connected. In fact, since we require RGF subgroups to be relatively hyperbolic, we are solely interested in presentation graphs which decompose into at least two components.

Suppose $\Gamma$ decomposes into the disjoint union \[\Gamma = \Gamma_1\, \sqcup \dots \sqcup\, \Gamma_m \]  of subgraphs for $m\geq 2$. Then $A(\Gamma)$ splits as the free product $A(\Gamma_1) \ast \dots \ast A(\Gamma_m)$, and is relatively hyperbolic with respect to its factors. Conversely, except for the integers, any right-angled Artin group with connected presentation graph is \textit{not} relatively hyperbolic \cite[Proposition 1.3]{BehrstockDrutuMosher-RelHyp}.

    Let $G$ be a finitely generated group with a prescribed ordered generating set $\{x_1,\ldots,x_k\}$.
	We can 
	decompose 
each 	element $g$ of $G$
	as the concatenation of \textit{syllables} of the form $x_i^{e_i}$. That is, we write 
		\[g= s^{e_1}_{1} \dotsb s^{e_n}_{n}\] 
	where each $s_j$ is a generator $x_i$ for some $i$ and $e_j\in\mathbb Z$. 
	Such a spelling is {\em reduced} if it satisfies the following conditions: 
	\begin{itemize}
		\item 
		Each $e_i \neq 0$; otherwise, remove the entire \textit{syllable} $s^{0}_{j}$. 
		\item Each $s_j\neq s_{j+1}$; otherwise, replace $s^{e_j}_{j} s^{e_{j+1}}_{j+1}$ with $s^{e_j + e_{j+1}}_{j}$.
		\item If 
		$s_j=x_i$ and $s_{j+1}=x_l$ commute, then $i<l$; otherwise, re-order the subword $s_{j}^{e_j} s_{j+1}^{e_{j+1}}$ to $ s_{j+1}^{e_{j+1}} s_{j}^{e_j}$.
	\end{itemize}
	
	It is a fact that when $G=A(\Gamma)$ is a RAAG and the generators correspond to the vertices of $\Gamma$, every element $g$  of $G$ has a unique reduced spelling which we call the {\em normal form} of $g$ \cite{hermiller_meier} (see also \cite{charney, CLM-RAAGs_in_MCG}).

\subsection{Reducibly geometrically finite RAAGs}
As a quick application of \Cref{main-subgroups}, we explain how the technology of displacing families leads to a simple proof the following slightly weaker version of \Cref{main-CLM}:

\begin{theorem}\label{thm:weak-CLM}
	Let $\left\{f_{1},\ldots, f_{n} \right\}$ be mapping classes fully supported on subsurfaces $S_{1},\ldots , S_{n}$, respectively, with realization graph $\Gamma$. Suppose: 
	
	\begin{itemize}
		\item $\Gamma$ decomposes as the disjoint union  $\Gamma_1
		\sqcup \cdots \sqcup \Gamma_m$ of subgraphs,
		with $m \geq 2$; 
		\item the subgroup $G_k$
		of $\Mod(S)$ generated by the elements $f_i$ supported on the vertices of $\Gamma_k$
		is reducible for all $k=1,\ldots,m$; and 
		\item 
		$d_S(\partial G_\ell, \partial G_j)\geq 5$.	\end{itemize}
		Define a map $\Psi\colon A(\Gamma)\to \Mod(S)$  by 	$\Psi(x_i)=f_{i}^{N k_i}$ for some $N, k_i\in\mathbb Z_{\neq 0}$.
	Then there exists an $N>0$ such that 
 the subgroup	$\<f_1^{Nk_1}, \ldots, f_n^{Nk_n}\>$ is isomorphic to 
	$\Psi(A(\Gamma_{1})) \ast \cdots \ast \Psi(A(\Gamma_{m})) $ and 
	RGF relative to  $\{\Psi(A(\Gamma_{1})),\ldots, \Psi(A(\Gamma_{m}))\}$.
\end{theorem}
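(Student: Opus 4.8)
The plan is to deduce \Cref{thm:weak-CLM} directly from \Cref{main-subgroups}, as the surrounding discussion advertises. The subgroups $G_1, \dots, G_m$ appearing in the hypotheses are reducible by assumption, and each is infinite because it contains at least one of the fully supported classes $f_i$ (being either a partial pseudo-Anosov or a twist, such an $f_i$ has infinite order). The separation hypothesis $d_S(\partial G_\ell, \partial G_j) \ge 5$ says precisely that $\{G_1, \dots, G_m\}$ is $5$--separated in the sense of \Cref{def:separated}. Thus \Cref{main-subgroups} applies and furnishes finite-index subgroups $G_k' \le G_k$ with the property that \emph{any} infinite subgroups $H_k \le G_k'$ generate a free product $H_1 \ast \dots \ast H_m$ that is RGF relative to $\{H_1, \dots, H_m\}$.

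The remaining task is to realize the prescribed RAAG factors $\Psi(A(\Gamma_k))$ as infinite subgroups of the $G_k'$ through a suitable choice of $N$. First I would note that $\Psi$ is a well-defined homomorphism for every $N$: whenever $x_i, x_j$ span an edge of $\Gamma$ the supports $S_i, S_j$ are disjoint, so $f_i$ and $f_j$ commute and hence so do $f_i^{Nk_i}$ and $f_j^{Nk_j}$. Next, for each index $i$, writing $k(i)$ for the subgraph containing $x_i$, the infinite cyclic group $\langle f_i \rangle$ meets the finite-index subgroup $G_{k(i)}'$ in a finite-index, hence nontrivial, subgroup $\langle f_i^{m_i}\rangle$ for some $m_i \ge 1$. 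Choosing $N$ to be a common multiple of $m_1, \dots, m_n$ guarantees $m_i \mid N k_i$ and therefore $f_i^{Nk_i} \in G_{k(i)}'$ for every $i$. With such an $N$ fixed, the factor $H_k := \Psi(A(\Gamma_k)) = \langle f_i^{Nk_i} : x_i \in \Gamma_k\rangle$ lies in $G_k'$ and is infinite, since it contains the infinite-order element $f_i^{Nk_i}$ for any vertex $x_i$ of $\Gamma_k$ (which is nonempty because $G_k$ is infinite).

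Finally, applying the conclusion of \Cref{main-subgroups} to these subgroups $H_k \le G_k'$ yields that $\langle H_1, \dots, H_m\rangle = \langle f_1^{Nk_1}, \dots, f_n^{Nk_n}\rangle$ is isomorphic to $H_1 \ast \dots \ast H_m = \Psi(A(\Gamma_1)) \ast \dots \ast \Psi(A(\Gamma_m))$ and is RGF relative to $\{\Psi(A(\Gamma_1)), \dots, \Psi(A(\Gamma_m))\}$, which is exactly the assertion of the theorem. I expect no genuine obstacle: \Cref{main-subgroups} does all the heavy lifting, and the only real content is the elementary divisibility bookkeeping needed to push each generator $f_i^{Nk_i}$ into its prescribed finite-index subgroup $G_{k(i)}'$. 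The one point deserving a moment's care is the order of quantifiers: \Cref{main-subgroups} fixes the finite-index subgroups $G_k'$ \emph{before} $N$ is chosen, so $N$ must be selected after invoking that theorem, not before.
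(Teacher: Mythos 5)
Your proposal is correct and matches the paper's own proof essentially verbatim: the paper likewise applies \Cref{main-subgroups} to the $5$--separated family $\{G_1,\dots,G_m\}$ to obtain the finite-index subgroups $G_k'$, and then chooses $N$ (there, the product $N_1\cdots N_n$ of exponents with $f_i^{N_i}\in G_{k(i)}'$, which plays the same role as your common multiple of the $m_i$) so that each $f_i^{Nk_i}$ lies in the appropriate $G_{k(i)}'$, making $H_k=\Psi(A(\Gamma_k))$ an infinite subgroup of $G_k'$ to which the conclusion of \Cref{main-subgroups} applies. Your closing remark about the order of quantifiers---fixing the $G_k'$ before selecting $N$---is exactly the structure of the paper's argument.
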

\begin{proof}
The family $\mathcal{G} = \{G_1,\dots, G_m\}$ of reducible subgroups is by assumption $5$--separated. Thus we may apply \Cref{main-subgroups} and let $G'_j \le G_j$ be the resulting finite-index subgroups. For each generator $f_i \in G_j$, there is a power $N_i$ such that $f_i^{N_i} \in G'_j$. Let $N = N_1\cdots N_n$ be the product. Then $f_i^{N k}\in G'_j$ for all $k\in \Z$; hence the subgroup $H_j=\Psi(A(\Gamma_j))$
is an infinite subgroup of $G'_j$. It now follows from \Cref{main-subgroups} that  $\langle H_1, \dots H_m \rangle \cong H_1 \ast \dots \ast H_m$ is reducibly geometrically finite. 
\end{proof}

The above theorem is nearly identical to \Cref{main-CLM} but weaker in two key ways. Firstly \Cref{main-CLM} loosens the 5--separation hypothesis $d_S(\partial G_\ell, \partial G_j)\ge 5$ on the subgroups to a mere 3--separation property $d_S(S_i, S_j)\ge 3$ on the supports of generators from distinct subgraphs (note that this is strictly weaker, since if $S_i$ is the support of a vertex generator of $\Gamma_j$, then $S_i$ and $\partial G_j$ are necessarily disjoint). Secondly, \Cref{main-CLM} strengthens the conclusion by applying to \emph{any} large powers $f_i^{p_i}$ of the generators, rather than simply those $f_i^{N k_i}$ that are multiples of some large integer $N$.

\begin{proof}[Proof of \Cref{main-CLM}]
We recall the setup of the theorem statement. Let $f_1,\dots, f_n\in \Mod(S)$ be mapping classes fully supported on an admissible collection of subsurfaces $S_1, \dots, S_n$. Let $\Gamma \colonequals \Gamma (S_1, \dots, S_n)$ be the associated realization graph and suppose it decomposes as a disjoint union $\Gamma = \Gamma_1 \sqcup \dots \sqcup \Gamma_m$ of subgraphs $\Gamma_j$ whose vertices generate reducible subgroups $G_j\le \Mod(S)$ and such that $d_S(S_i, S_k) \ge 3$ whenever vertices $f_i,f_k$ lie in distinct subgraphs.

Since $f_i$ is fully supported on $S_i$, it is pure and $S_i$ is its only domain. Hence \Cref{thm:minimal_translation_mangahas_cor} implies $d_{S_i}(f_i^n \gamma, \gamma) \ge c \abs{n}$ for any curve $\gamma$ with nontrivial projection to $S_i$. Let $D = \max_{i,j,k}\{d_{S_j}(S_i, S_k)\}$. We will show that $N =  (\bsi + 6 \bgit + D)/c$ satisfies the conclusion of the theorem. To see this, fix any tuple $(p_1,\dots, p_n)$ with $\abs{p_i}\ge N$ for each $i$.
Let $A(\Gamma)$ be the abstract RAAG on the graph $\Gamma$, with generators $x_1,\dots, x_n$ corresponding to the vertices of $\Gamma$. Let $\Psi\colon A(\Gamma)\to \Mod(S)$ the homomorphism sending $x_i$ to $f_i^{p_i}$ and set $H_j = \Psi(A(\Gamma_j))$ for $j=1,\dots, m$. We wish to verify that the group $\Psi(A(\Gamma)) = \langle f_1^{p_1},\dots, f_n^{p_n}\rangle$ is isomorphic to $H_1\ast\dots \ast H_m$ 
 and RGF with respect to the family $\{H_1,\dots, H_m\}$.

We again use the setup of \Cref{s:cayley_trees} and deduce the result from \Cref{lem:qi-to-CS-implies_RGF}: Let $T$ be the Bass--Serre tree for the abstract free product $H = H_1\ast\dots \ast H_m$ and $\phi\colon T\to \cc(S)$ the map constructed \Cref{sec:free_products_in_MCG}. 
Now, fix type--1 vertices $v,v'\in T$. If $d_T(v,v') = 2r$, then the $T$--geodesic joining them has the form $v = a_0, b_0, a_1, \dots  ,b_{r-1}, a_r = v'$ where each $b_s = \mathbf{v}(w_s)$ is the type--2 vertex labeled by some element $w_s\in H$. Since $a_s$ is adjacent to $b_{s}$ and $b_{s-1}$ when $s > 0$, we additionally have $a_s = \mathbf{v}(w_{s-1} H_{I(s)}) = \mathbf{v}(w_s H_{I(s)})$ for some index $I(s)\in \{1,\dots, m\}$. 
In particular, we have $h_s = w_{s-1}\inv w_s\in H_{I(s)}$ for $0 < s < r$. Therefore $w_0\inv w_{r-1} = h_1h_2\cdots h_{r-1}$ is evidently the unique geodesic spelling of  the element $w_0\inv w_{r-1}\in H_1\ast\dots \ast H_m$ as a product of elements of the factors $H_j$.  Thus $w_{r-1} = w_0 h_1\cdots h_{r-1}$.
	
Since $h_s \in H_{I(s)} = \Psi(A(\Gamma_{I(s)}))$, we may write $h_s = \Psi(h'_s)$ for some element $h'_s\in A(\Gamma_{I(s)})$. Let us write $h'_s = y_{s,1}   \dotsb y_{s,k_s}$ in normal form for $A(\Gamma_{I(s)})$, where each $y_{s,i}$ equals $x^e$ for some vertex generator $x\in \Gamma_{I(s)}$ and nonzero power $e\in \Z$, and where the generators associated to adjacent letters $y_{s,i}$ and $y_{s,i+1}$ are distinct with the lower-indexed generator coming first in the case they commute. We concatenate these words and re-index 
\[ 
	g_1' \dotsb g_\ell' = \left(y_{1,1}\dotsb y_{1,k_1}\right) \left(y_{2,1}\dotsb y_{2,k_2}\right) \dotsb \left(y_{r-1,1}\dotsb y_{r-1,k_{r-1}}\right) 
\]
to get a normal-form word in $A(\Gamma)$ of length $\ell = k_1 + \dots + k_{r-1}$. Taking the image now expresses $h_1\dotsb h_{r-1} = \Psi(g'_1\dots g'_\ell) = g_1\dotsb g_\ell$ as a product of elements $g_t = \Psi(g'_t) = \Psi(x_{\nu(t)}^{e_t})$, each of which has the form  $g_t = f_{\nu(t)}^{q_t}$ for some index $\nu(t)\in \{1,\dots, n\}$ and exponent satisfying $\abs{q_t} = \abs{e_t p_{\nu(t)}} \ge N$.
For $0 < s < r$, let $\sigma(s) = k_1 + \dots + k_{s-1} + 1$ be the index of the first letter of the subword $h_s = \Psi(h_s')$ in the above spelling $h_1\dotsb h_{r-1} = g_1\dotsb g_\ell$; so that $g_{\sigma(s)} = \Psi(y_{s,1})$.

We now define a list $Y_1,\dots, Y_\ell$ of subsurfaces by declaring $Y_t = w_0 g_1\dotsb g_t \cdot S_{\nu(t)}$, where $S_{\nu(t)}$ is the support of the pure element $g_t = f_{\nu(t)}^{q_t}$. 
Since $g_t$ preserves $S_{\nu(t)}$, we additionally note that $Y_t = w_0 g_1\cdots g_{t-1} \cdot S_{\nu(t)}$ and that this translated surface $Y_t$ is the support of the conjugate $\beta_t = (w_0g_1\cdots g_{t-1})g_t(w_0g_1\cdots g_{t-1})\inv$. Note also that these conjugates satisfy 
\[
	\beta_t \dotsb \beta_1 w_0 = w_0 g_1 \dotsb g_t\quad\text{for all}\quad 0<t\le \ell.
\]

To round out notation, let us also set $Y_0 = w_0 \cdot S_{\nu(0)}$ and $Y_{\ell+1} = w_0g_1\dotsb g_\ell \cdot S_{\nu(\ell+1)}$, where $\nu(0), \nu(\ell+1)\in \{1,\dots, n\}$ are any indices so that the subsurfaces $S_{\nu(0)}$, $S_{\nu(\ell+1)}$ lie in the respective subgraphs $\Gamma_{I(0)}$ and $\Gamma_{I(r)}$ of $\Gamma$. In particular, since $f_{\nu(0)}\in H_{I(0)}$ preserves $\partial H_{I(0)}$ and is fully supported on $S_{\nu(0)}$, the multicurves $\partial S_{\nu(0)}$ and $\partial H_{I(0)}$ are disjoint. Similarly $\partial S_{\nu(\ell+1)}$ and $\partial H_{I(r)}$ are disjoint. 
Recalling that $v = a_0 = \mathbf{v}(w_0 H_{I(0)})$ and $v' = a_r = \mathbf{v}(w_{r-1}H_{I(r)})$ where $w_{r-1} = w_0 h_1\dotsb h_{r-1} = w_0 g_1\dotsb g_\ell$, it follows that $\partial Y_0$ is disjoint from $w_0 \cdot \partial H_{I(0)} = \phi(v)$ and that $\partial Y_{\nu(\ell+1)}$ is disjoint from $w_{r-1}\cdot \partial H_{I(r))} = \phi(v')$. We also note that
\begin{align*}
d_S(Y_0, Y_1) &= d_S(w_0 \cdot S_{\nu(0)}, w_0 \cdot S_{\nu(1)}) = d_S(S_{\nu(0)}, S_{\nu(1)})\le L\qquad\text{and}\\
d_S(Y_{\ell}, Y_{\ell+1}) &= d_S(w_0 g_1\dotsb g_\ell \cdot S_{\nu(\ell)} , w_0 g_1\dotsb g_\ell \cdot S_{\nu(\ell+1)}) =d_S(S_{\nu(\ell)}, S_{\nu(\ell+1)})\le L,
\end{align*}
where $L = \max_{i,j}d_S(S_i, S_j)$. 
Therefore, by the triangle inequality, we have
\begin{equation}
\label{eqn:related_to_Y_t_distance}
d_S(\phi(v), \phi(v')) \ge d_S(Y_0, Y_{\ell+1}) -2 \ge d_S(Y_1, Y_\ell) -2 -2L.
\end{equation}
It hence suffices to give a lower bound on $d_S(Y_1,Y_\ell)$. We will accomplish this by applying \Cref{cor:general_projections_persist} to the sequence $Y_1,\dots, Y_\ell$ and \Cref{cor:projections_persist} to the subsequence $Y_{\sigma(1)},\dots, Y_{\sigma(r-1)}$.  To enable this, we first establish some facts:

\begin{claim}
\label{claim:technical_bit_of_ThmA}
Fix $1\le j\le \ell$. 
If $\iota(j) < t < \tau(j)$ then:
\begin{itemize}
\item $g_j$ and $g_t$ commute and lie in a common subword $h_s$ of $h_1\dotsb h_{r-1}$; that is $\sigma(s) \le j,t < \sigma(s+1)$ for some $0 < s < r$.
\item $\beta_t$ fixes $Y_j$ and, if $t\ne j$, then $Y_t$ is disjoint from $Y_j$ and $\beta_t$ does not change projections to $Y_j$, in that $\pi_{Y_j}(\beta_t \gamma) = \pi_{Y_j}(\gamma)$ for every multicurve $\gamma$ on $S$.
\end{itemize}
\end{claim}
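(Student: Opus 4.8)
The plan is to prove both bullets simultaneously by induction on $|t-j|$, the engine being the dichotomy that, for indices in the window $(\iota(j),\tau(j))$, the translate $Y_t$ overlaps $Y_j$ precisely when $g_j$ and $g_t$ fail to commute or lie in distinct subwords. First I would record the reduction: translating by $(w_0g_1\cdots g_{j-1})\inv$, the overlap $Y_j\pitchfork Y_t$ (say for $j<t$) is equivalent to $S_{\nu(j)}\pitchfork g_j\rho\,S_{\nu(t)}$, and disjointness of $Y_j,Y_t$ to disjointness of $S_{\nu(j)}$ from $g_j\rho\,S_{\nu(t)}$, where $\rho=g_{j+1}\cdots g_{t-1}$. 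The elementary but crucial observation is that if every factor of $\rho$ is supported on a subsurface disjoint from $S_{\nu(j)}$, then $\rho$ admits a representative that is the identity on a neighborhood of $S_{\nu(j)}$; such a $\rho$ preserves disjointness from $S_{\nu(j)}$, preserves whether a multicurve crosses $S_{\nu(j)}$, and satisfies $\pi_{S_{\nu(j)}}(\rho\gamma)=\pi_{S_{\nu(j)}}(\gamma)$ for every multicurve $\gamma$.

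For the inductive step I would treat $t>j$ (the case $t<j$ being symmetric). All indices strictly between $j$ and $t$ lie in $(\iota(j),\tau(j))$, so by the inductive hypothesis $g_{j+1},\dots,g_{t-1}$ each commute with $g_j$ and lie in the same subword $h_s$ as $g_j$; in particular $\rho$ is the identity on $S_{\nu(j)}$. Since $g_{t-1}\in h_s$, the letter $g_t$ is either also in $h_s$ or is the first letter of the next subword $h_{s+1}$, which is supported in a different component of $\Gamma$. I then argue by contradiction: if $g_t\in h_{s+1}$ then $d_S(S_{\nu(j)},S_{\nu(t)})\ge 3$ by the standing separation hypothesis, while if $g_t\in h_s$ but $g_j,g_t$ do not commute then $S_{\nu(j)}$ and $S_{\nu(t)}$ intersect. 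In either case $S_{\nu(t)}$ is distinct from $S_{\nu(j)}$ (two equal-vertex letters in the window would have to be separated by a letter not commuting with $g_j$, which the inductive hypothesis forbids), so admissibility upgrades ``intersecting'' to genuinely crossing, i.e.\ $S_{\nu(j)}\pitchfork S_{\nu(t)}$. Because $\rho$ preserves crossing with $S_{\nu(j)}$ and $g_j$ fixes $\partial S_{\nu(j)}$ and preserves $\pi_{S_{\nu(j)}}$, one checks directly that $S_{\nu(j)}\pitchfork g_j\rho\,S_{\nu(t)}$, i.e.\ $Y_j\pitchfork Y_t$ --- contradicting $t\in(\iota(j),\tau(j))$. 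Hence $g_t\in h_s$ and commutes with $g_j$, which is the first bullet.

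The second bullet is then a short consequence. For $t=j$, the class $\beta_j$ is supported on $Y_j$ and so fixes that subsurface. For $t\ne j$, the first bullet gives that $S_{\nu(t)}$ is disjoint from $S_{\nu(j)}$; since $\rho$ and $g_j$ are the identity on $S_{\nu(j)}$, the translate $g_j\rho\,S_{\nu(t)}$ remains disjoint from $S_{\nu(j)}$, so $Y_t$ is disjoint from $Y_j$. Consequently $\beta_t$, being supported on $Y_t\subset S\setminus Y_j$, fixes $Y_j$ and leaves every projection to $Y_j$ unchanged.

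I expect the main obstacle to be the bookkeeping of the interaction between the RAAG commutation structure inside a subword and the free-product structure across subwords --- in particular the point that, once the inductive hypothesis forces $g_{t-1}$ into $g_j$'s subword $h_s$, any $g_t$ outside $h_s$ must be the very first letter of the adjacent subword $h_{s+1}$ and hence is supported in a different component of $\Gamma$, so that the hypothesis $d_S\ge 3$ becomes available. The genuinely geometric ingredients are light: beyond the separation and admissibility hypotheses I only need that a mapping class supported off $S_{\nu(j)}$ preserves projections to and crossing of $S_{\nu(j)}$, and that applying $g_j$ (which preserves $S_{\nu(j)}$ and its boundary) cannot destroy an overlap with $S_{\nu(j)}$. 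The quantitative large-projection estimate of \Cref{thm:minimal_translation_mangahas_cor} is not needed for this claim itself, but will be used afterward to verify the projection hypotheses of \Cref{cor:general_projections_persist}.
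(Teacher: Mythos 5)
Your argument follows essentially the paper's route: the same induction on $\abs{t-j}$, the same translation of (non-)overlap of $Y_j$ and $Y_t$ back to the model supports, the same use of the $d_S\ge 3$ separation between distinct subgraphs to force $g_t$ into the subword $h_s$, and the same appeal to admissibility (non-nesting) plus RAAG normal form to rule out equal generators; your contradiction framing versus the paper's direct derivation is cosmetic. One bookkeeping difference is worth flagging: the paper uses the inductive hypothesis to rewrite $Y_j = \beta_{t-1}\cdots\beta_1 w_0\cdot S_{\nu(j)}$, so that $(Y_j, Y_t)$ is the image of the admissible pair $(S_{\nu(j)}, S_{\nu(t)})$ under a \emph{single} mapping class and admissibility transports verbatim, whereas your reduction compares $S_{\nu(j)}$ with $g_j\rho\cdot S_{\nu(t)}$, translates of the admissible pair by \emph{different} elements.

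That difference is exactly where your write-up has a genuine (if small) gap: the final inference ``$\beta_t$ is supported on $Y_t\subset S\setminus Y_j$, hence leaves every projection to $Y_j$ unchanged.'' Disjointness of $Y_t$ and $Y_j$ does \emph{not} by itself give this when $Y_j$ is the annulus about a boundary component of $Y_t$: a partial pseudo-Anosov fully supported on $Y_t$ fixes that annulus but can twist fractionally about its core, so its powers move annular projections to $Y_j$ unboundedly. This is precisely why the definition of admissibility includes the clause that $S_j$ is not the annulus about a boundary component of $S_i$, and why the paper explicitly invokes that clause at this step. In your setup you must additionally check the clause survives your asymmetric translation; this is a one-line patch: if the core $c$ of an annular $S_{\nu(j)}$ were isotopic to a component of $g_j\rho\,\partial S_{\nu(t)}$, then since $(g_j\rho)^{-1}$ fixes $c$ (as $\rho$ is supported off $S_{\nu(j)}$ and $g_j$ is a twist about $c$), the curve $c$ would be isotopic to a component of $\partial S_{\nu(t)}$, contradicting admissibility. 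With that patch, and with the wording slip corrected that ``$\rho$ and $g_j$ are the identity on $S_{\nu(j)}$'' --- $g_j$ is certainly not the identity on $S_{\nu(j)}$, but it preserves $S_{\nu(j)}$ setwise, which is all your disjointness and projection arguments actually use --- your proof is complete and agrees with the paper's.
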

\begin{proof}
\renewcommand{\qedsymbol}{$\Diamond$}
We only consider the case $j \le t < \tau(j)$, with the alternative $\iota(j) < t \le j$ being symmetric.
We proceed by inducting on $t$, with the base case $t = j$ being trivial. So, suppose $j < t < \tau(j)$ and that the claim holds for all $j\le t'<t$. 
Recall that $Y_{t} = \beta_{t-1}\cdots\beta_1w_0 \cdot S_{\nu(t)}$. The induction hypothesis also implies $Y_j = \beta_{t-1}\cdots \beta_j \cdot Y_j = \beta_{t-1}\cdots \beta_1 w_0 \cdot S_{\nu(j)}$. Therefore the fact that $Y_j$ and $Y_{t}$ do not overlap implies $S_{\nu(j)}$ and $S_{\nu(t)}$ do not overlap.

By induction we know the letters $g_j,\dots,g_{t-1}$ lie in a common subword $h_s$. It follows that $g_t$ must also lie in this subword, since all letters in the next subword $h_{s+1}\in H_{I(s+1)}$ are images of generators from distinct subgraphs $\Gamma_{I(s+1)} \ne \Gamma_{I(s)}$ and hence are supported on surfaces that overlap $S_{\nu(j)}$. Additionally, our admissibility hypothesis ensures distinct surfaces in the family $\{S_1,\dots, S_n\}$ are not nested. Therefore, since they do not overlap, $S_{\nu(j)}$ and $S_{\nu(t)}$ must either be disjoint or equal. But if they are equal, then $g_j$ and $g_t$ are both powers of the same generator $f_{\nu(j)}=f_{\nu(t)}$ and hence $g_t$ also commutes with the letters $g_{j},\dots, g_{t-1}$. This violates our normal form assumption on the word $h_s'\in A(\Gamma_{I(s)})$ wherein commuting letters must appear in order of increasing index. Therefore $S_{\nu(j)}$ and $S_{\nu(t)}$ are disjoint. It now follows that $g_j$ and $g_t$ commute and that $Y_t$ is disjoint from $Y_j$. Moreover, since $\beta_t$ is fully supported on $Y_t$ and $Y_j$ is not the annulus about a boundary component of $Y_t$, it follows that $\beta_t$ preserves $Y_j$ and does not change projections to $Y_j$.
\end{proof}

\begin{claim}
\label{claim:satisfy_hypothesis_of_corollary}
The sequence $Y_1,\dots Y_\ell$ satisfies the hypothesis of \Cref{cor:general_projections_persist}.
\end{claim}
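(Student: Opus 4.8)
The plan is to verify, for each $j$ at which both the predecessor $i\colonequals\iota(j)$ and the successor $k\colonequals\tau(j)$ exist, the single family of inequalities $d_{Y_j}(Y_{\iota(j)},Y_{\tau(j)})\ge \bgit+6\bsi$ that constitutes the hypothesis of \Cref{cor:general_projections_persist}. The governing idea is to transport the entire computation into the curve graph of the single subsurface $S_{\nu(j)}$, where the fully supported pure element $g_j=f_{\nu(j)}^{q_j}$ has $S_{\nu(j)}$ as its unique domain and the translation estimate of \Cref{thm:minimal_translation_mangahas_cor} applies. Writing $u=w_0g_1\cdots g_{j-1}$, so that $Y_j=u\cdot S_{\nu(j)}$ and $\pi_{Y_j}(\gamma)=u\cdot\pi_{S_{\nu(j)}}(u\inv\gamma)$, the target quantity becomes $d_{Y_j}(Y_i,Y_k)=d_{S_{\nu(j)}}(u\inv\partial Y_i,\,u\inv\partial Y_k)$.

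First I would expand the two transported multicurves by a direct cancellation, obtaining $u\inv\partial Y_i=g_{j-1}\inv\cdots g_i\inv\cdot\partial S_{\nu(i)}$ and $u\inv\partial Y_k=g_jg_{j+1}\cdots g_{k-1}\cdot\partial S_{\nu(k)}$. The crux is then to collapse these projections using \Cref{claim:technical_bit_of_ThmA}: every index $t$ strictly between $i$ and $k$ with $t\ne j$ satisfies $\iota(j)<t<\tau(j)$, so $g_t$ commutes with $g_j$ and is supported on a subsurface disjoint from $S_{\nu(j)}$, whence $\pi_{S_{\nu(j)}}(g_t^{\pm1}\gamma)=\pi_{S_{\nu(j)}}(\gamma)$ for every multicurve $\gamma$. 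Peeling these factors off from the outside, the first projection reduces to $\pi_{S_{\nu(j)}}(g_i\inv\partial S_{\nu(i)})$, and since $g_i$ is supported on $S_{\nu(i)}$ it fixes $\partial S_{\nu(i)}$, leaving $\pi_{S_{\nu(j)}}(\partial S_{\nu(i)})$. For the second, I would use the commutation to rewrite $g_jg_{j+1}\cdots g_{k-1}=(g_{j+1}\cdots g_{k-1})g_j$ and discard the disjointly supported prefix, leaving $\pi_{S_{\nu(j)}}(g_j\partial S_{\nu(k)})$. This yields the clean identity $d_{Y_j}(Y_i,Y_k)=d_{S_{\nu(j)}}(\partial S_{\nu(i)},\,g_j\partial S_{\nu(k)})$.

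The final step is a triangle inequality anchored at $\pi_{S_{\nu(j)}}(\partial S_{\nu(k)})$. Since $k=\tau(j)$ gives $Y_j\pitchfork Y_k$, the surface $S_{\nu(j)}$ overlaps $S_{\nu(k)}$, so $\partial S_{\nu(k)}$ projects nontrivially and \Cref{thm:minimal_translation_mangahas_cor} gives $d_{S_{\nu(j)}}(g_j\partial S_{\nu(k)},\partial S_{\nu(k)})\ge c\abs{q_j}\ge cN$, while $d_{S_{\nu(j)}}(\partial S_{\nu(i)},\partial S_{\nu(k)})\le D$ by the definition of $D$. Combining these,
\[ d_{Y_j}(Y_i,Y_k)\ \ge\ c\abs{q_j}-D\ \ge\ cN-D\ \ge\ \bgit+6\bsi, \]
where the last inequality holds by our choice of $N$. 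I expect the main obstacle to be the bookkeeping in the second paragraph: one must translate \Cref{claim:technical_bit_of_ThmA}, which is phrased for the conjugates $\beta_t$ acting on $Y_j$, into the corresponding statements about the letters $g_t$ acting on $S_{\nu(j)}$, and then confirm that precisely the intermediate factors cancel while the single ``active'' factor $g_j$ and the boundary-fixing factor $g_i$ are correctly accounted for, along with the verification that $\partial S_{\nu(k)}$ projects nontrivially so that the translation-length bound is available.
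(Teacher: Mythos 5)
Your proposal is correct and takes essentially the same route as the paper's proof: the paper carries out the identical cancellation, only in the conjugated picture, writing $Y_{\iota(j)}$ and $Y_{\tau(j)}$ with prefixes of $\beta_t$'s that by \Cref{claim:technical_bit_of_ThmA} do not change projections to $Y_j$, and then using $\beta\inv\beta_j\beta = g_j$ to land on the same identity $d_{Y_j}(Y_{\iota(j)},Y_{\tau(j)}) = d_{S_{\nu(j)}}\bigl(S_{\nu(\iota(j))}, g_j\cdot S_{\nu(\tau(j))}\bigr)$, finishing exactly as you do via \Cref{thm:minimal_translation_mangahas_cor} and the triangle inequality with the constant $D$. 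Your bookkeeping concern resolves just as you anticipate, since the disjointness of $S_{\nu(t)}$ from $S_{\nu(j)}$ (and hence that $g_t^{\pm1}$ fixes projections to $S_{\nu(j)}$) is precisely what is established inside the proof of \Cref{claim:technical_bit_of_ThmA}, so conjugating by $u\inv$ first rather than last is a purely notational variation.
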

\begin{proof}
\renewcommand{\qedsymbol}{$\Diamond$}
Fix some index $1 < j< \ell$ so that $\iota(j)$ and $\tau(j)$ both exist. To ease notation, set $\beta = \beta_{j-1}\dotsb \beta_1 w_0$. Then we may write
\begin{align*}
Y_{j} = \beta \cdot S_{\nu(j)},\quad 
Y_{\iota(j)} &= (\beta_{\iota(j)+1}\inv\dotsb \beta_{j-1}\inv)\beta \cdot S_{\nu(\iota(j))},\\
\text{and}\quad
 Y_{\tau(j)} &= (\beta_{\tau(j)-1}\dotsb \beta_{j+1})\beta_j \beta \cdot S_{\nu(\tau(j))}.
\end{align*}
By \Cref{claim:technical_bit_of_ThmA}, none of the elements $\beta_{\iota(j)+1},\dots, \beta_{j-1},\beta_{j+1},\dots,\beta_{\tau(j)-1}$ change projections to $Y_j$. Therefore the prefixes $(\beta_{\iota(j)+1}\inv\dotsb \beta_{j-1}\inv)$ and $(\beta_{\tau(j)-1}\dotsb \beta_{j+1})$ above do not effect projections and we find that
\begin{align*}
d_{Y_j}(Y_{\iota(j)}, Y_{\tau(j)})
&= d_{\beta \cdot S_{\nu(j)}}(\beta \cdot S_{\nu(\iota(j))}, \beta_j \beta \cdot S_{\nu(\tau(j))})
= d_{S_{\nu(j)}}(S_{\nu(\iota(j))}, g_j \cdot S_{\nu(\tau(j))})
\end{align*}
where here have used the observation $\beta\inv \beta_j \beta = g_j$. By hypothesis, we have $g_j = f_{\nu(j)}^{q_t}$ where $\abs{q_t} \ge N = (\bsi + 6\bgit+D)/c$ and $d_{S_{\nu(j)}}(S_{\nu(\iota(j))}, S_{\nu(\tau(j))}) \le D$. Therefore, since $f_{\nu(j)}$ is fully supported on $S_{\nu(j)}$, by \Cref{thm:minimal_translation_mangahas_cor} we conclude
\[d_{Y_j}(Y_{\iota(j)},Y_{\tau(j)}) \ge d_{S_{\nu(j)}}(S_{\nu(\tau(j))}, f_{\nu(j)}^{e_t} S_{\nu(\tau(j))}) - D \ge Nc - D = \bsi + 6\bgit.\qedhere \]
\end{proof}

\begin{claim}
\label{claim:subsequence_big_distance}
For each $s$ we have $d_S(Y_{\sigma(s-1)}, Y_{\sigma(s)})\ge 3$ and thus $Y_{\sigma(s-1)}\pitchfork Y_{\sigma(s)}$.
\end{claim}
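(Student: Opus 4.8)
The plan is to deduce the bound from the distance–monotonicity of \Cref{cor:projections_persist}(\ref{item:dist_at_least_3}) applied to a carefully chosen pitchfork subsequence, after first reducing the statement to a clean assertion about the supports $S_{\nu(\cdot)}$. First I would record the reduction. Writing $Q = w_0 h_1\cdots h_{s-2} = w_0 g_1\cdots g_{\sigma(s-1)-1}$ for the common prefix and using that the fully supported elements $g_{\sigma(s-1)}$ and $g_{\sigma(s)}$ fix their own supports, one computes $Y_{\sigma(s-1)} = Q\cdot S_{\nu(\sigma(s-1))}$ and $Y_{\sigma(s)} = Q\,h_{s-1}\cdot S_{\nu(\sigma(s))}$, so that, since the $H$--action on $\cc(S)$ is by isometries,
\[ d_S(Y_{\sigma(s-1)}, Y_{\sigma(s)}) = d_S(S_{\nu(\sigma(s-1))},\, h_{s-1}\cdot S_{\nu(\sigma(s))}). \]
The governing observation is that consecutive syllables come from distinct factors: because $h_1\cdots h_{r-1}$ is the reduced spelling of an element of the free product, $I(s-1)\ne I(s)$, and therefore every index $t$ lying in the block $h_{s-1}$ has $S_{\nu(t)}\in\Gamma_{I(s-1)}$ at distance $d_S(S_{\nu(t)}, S_{\nu(\sigma(s))})\ge 3$ from $S_{\nu(\sigma(s))}\in\Gamma_{I(s)}$, by the theorem's separation hypothesis on distinct subgraphs.

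The main obstacle is that the right-hand side above is \emph{not} simply $d_S(S_{\nu(\sigma(s-1))}, S_{\nu(\sigma(s))})$: the syllable $h_{s-1}$ may contain letters whose supports overlap $S_{\nu(\sigma(s-1))}$, so $h_{s-1}$ genuinely moves $S_{\nu(\sigma(s))}$ and cannot be cancelled. Rather than control $h_{s-1}\cdot S_{\nu(\sigma(s))}$ directly, I would route through the final consecutive pair. Concretely, I would build the greedy pitchfork chain $b_1 = \sigma(s-1)$, $b_{i+1}=\tau(b_i)$ inside $Y_1,\dots,Y_\ell$, and use \Cref{claim:technical_bit_of_ThmA} to show it cannot overshoot: if some $b_i$ in the block had $\tau(b_i)>\sigma(s)$, then $t=\sigma(s)$ would satisfy $\iota(b_i)<t<\tau(b_i)$, forcing $g_{b_i}$ and $g_{\sigma(s)}$ into a common syllable --- impossible, as they lie in $h_{s-1}$ and $h_s$. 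Hence $\tau(b_i)\le\sigma(s)$ throughout, the strictly increasing chain terminates at some $b_p=\sigma(s)$, and its penultimate term $b_{p-1}$ lies in the block $h_{s-1}$ with $\tau(b_{p-1})=\sigma(s)$.

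The heart of the argument is to evaluate this final jump. Factoring out the prefix $Q' = w_0 g_1\cdots g_{b_{p-1}-1}$, one has $Y_{b_{p-1}} = Q'\cdot S_{\nu(b_{p-1})}$ and $Y_{\sigma(s)} = Q'\,(g_{b_{p-1}}\cdots g_{\sigma(s)-1})\cdot S_{\nu(\sigma(s))}$. Now $g_{b_{p-1}}$ fixes its own support $S_{\nu(b_{p-1})}$, while every letter $g_t$ with $b_{p-1}<t<\sigma(s)=\tau(b_{p-1})$ has support $S_{\nu(t)}$ \emph{disjoint} from $S_{\nu(b_{p-1})}$ --- this is precisely the disjointness produced in the proof of \Cref{claim:technical_bit_of_ThmA} --- and so also fixes it. Thus the entire tail $g_{b_{p-1}}\cdots g_{\sigma(s)-1}$ fixes $S_{\nu(b_{p-1})}$, and one more use of isometry collapses the jump:
\[ d_S(Y_{b_{p-1}}, Y_{\sigma(s)}) = d_S(S_{\nu(b_{p-1})},\, S_{\nu(\sigma(s))}) \ge 3, \]
the inequality holding since $b_{p-1}\in\Gamma_{I(s-1)}$ and $\sigma(s)\in\Gamma_{I(s)}$ lie in distinct subgraphs.

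Finally I would propagate the bound back to the endpoints. Since $Y_1,\dots,Y_\ell$ satisfies the hypotheses of \Cref{cor:general_projections_persist} by \Cref{claim:satisfy_hypothesis_of_corollary}, the pitchfork subsequence $Y_{b_1},\dots,Y_{b_p}$ inherits the conclusions of \Cref{cor:projections_persist} (cf.\ \Cref{rem:general-to-special-persitence}), so the monotonicity in \Cref{cor:projections_persist}(\ref{item:dist_at_least_3}) yields
\[ d_S(Y_{\sigma(s-1)}, Y_{\sigma(s)}) = d_S(Y_{b_1}, Y_{b_p}) \ge d_S(Y_{b_{p-1}}, Y_{b_p}) \ge 3. \]
The concluding assertion $Y_{\sigma(s-1)}\pitchfork Y_{\sigma(s)}$ is then immediate, since $d_S\ge 3\ge 2$ forces the subsurfaces to overlap. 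I expect the points demanding the most care to be the non-overshoot claim and the verification that the greedy chain lands exactly on $\sigma(s)$; once those are in place, the peeling cancellations and the appeal to monotonicity are routine.
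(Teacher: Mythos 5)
Your proposal is correct and takes essentially the same route as the paper's proof: the paper likewise builds the greedy chain $a_{i+1}=\tau(a_i)$ starting at $a_1=\sigma(s-1)$, uses \Cref{claim:technical_bit_of_ThmA} both to rule out overshooting $\sigma(s)$ and to collapse the final jump to $d_S(S_{\nu(a_k)},S_{\nu(\sigma(s))})\ge 3$, and then applies \Cref{cor:projections_persist}(\ref{item:dist_at_least_3}) to the resulting overlapping subsequence via \Cref{claim:satisfy_hypothesis_of_corollary} and \Cref{cor:general_projections_persist}. Your opening reduction to $d_S(S_{\nu(\sigma(s-1))},\, h_{s-1}\cdot S_{\nu(\sigma(s))})$ is motivational material the paper omits, but the core argument is identical.
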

\begin{proof}
\renewcommand{\qedsymbol}{$\Diamond$}
Starting with $a_1 = \sigma(s-1)$, build a sequence $a_1,a_2,\dots$ by recursively setting $a_{i+1} = \tau(a_i)$. This must eventually yield $a_{k+1}= \tau(a_k) = \sigma(s)$ for some $k$, since otherwise we have $a_k < \sigma(s) < \tau(a_k)$ which, by \Cref{claim:technical_bit_of_ThmA} would imply $g_{a_k}$ and $g_{\sigma(s)}$ lie in the same subword of $h_1\dotsb h_{r-1}$, violating the fact that  $g_{\sigma(s)}$ is the first letter in the subword $h_s$. Thus \Cref{claim:technical_bit_of_ThmA} now tells us the elements $\beta_{a_{k+1}},\dots, \beta_{\sigma(s)-1}$ preserve the surface $Y_{a_k}$ and that $S_{\nu(a_k)}$ and $S_{\nu(\sigma(s))}$ lie in distinct subgraphs $\Gamma_{I(s-1)}\ne \Gamma_{I(s)}$. 
Therefore $Y_{a_k} = \beta_{\sigma(s)-1}\dotsb \beta_{a_{k+1}}\cdot Y_{a_k}$ and we have
\begin{align*}
d_S(Y_{a_k},Y_{\sigma(s)}) &= d_S(\beta_{\sigma(s)-1}\dotsb \beta_1 w_0 \cdot S_{\nu(a_k)}, \beta_{\sigma(s)-1}\dotsb \beta_1 w_0 \cdot S_{\sigma(s)})\\
&= d_S(S_{\nu(a_k)}, S_{\nu(\sigma(s))}) \ge 3
\end{align*}
by our assumption on the family $\{S_1,\dots, S_n\}$ and subgraphs $\Gamma_1,\dots,\Gamma_m$.

Now, by construction the subsequence $\sigma(s-1) = a_1 , \dots, a_{k+1} = \sigma(s)$ satisfies $Y_{a_i} \pitchfork Y_{a_{i+1}}$ for each $i$. Since $Y_1, \dots, Y_\ell$ satisfies the hypothesis of \Cref{cor:general_projections_persist} by \Cref{claim:satisfy_hypothesis_of_corollary}, we may apply  \Cref{cor:projections_persist} to the subsequence and conclude that
\[d_S(Y_{\sigma(s-1)}, Y_{\sigma(s)}) = d_S(Y_{a_1}, Y_{a_{k+1}}) \ge d_S(Y_{a_k}, Y_{a_{k+1}}) \ge 3.\qedhere\]
\end{proof}

With these claims in hand, it is now trivial to complete the proof of the theorem. By \Cref{claim:satisfy_hypothesis_of_corollary} and \Cref{claim:subsequence_big_distance} we may apply \Cref{cor:general_projections_persist} to the sequence $Y_1,\dots, Y_\ell$ to conclude the subsequence $Y_{\sigma(1)},Y_{\sigma(2)},\dots,Y_{\sigma(r-1)}$ satisfies the hypothesis of \Cref{cor:projections_persist}. Therefore, since $d_S(Y_{\sigma(s-1)}, Y_{\sigma(s)})\ge 3$ for each $s$, we have $d_S(Y_{\sigma(1)}, Y_{\sigma(r-2)}) \ge r-2$. Notice that $Y_1 = Y_{\sigma(1)}$. If $Y_{\sigma(r-1)}\pitchfork Y_\ell$, we may also apply \Cref{cor:projections_persist} to $Y_{\sigma(1)},\dots, Y_{\sigma(r-1)}, Y_\ell$ to conclude $d_S(Y_1, Y_\ell) \ge d_S(Y_{\sigma(1)},Y_{\sigma(r-1)})\ge r-2$. Otherwise $d_S(Y_{\sigma(r-1)}, Y_\ell)\le 1$ and we have $d_S(Y_1,Y_\ell)\ge r-3$. In either case,  together with (\ref{eqn:related_to_Y_t_distance}), this gives the following bound needed to apply \Cref{lem:qi-to-CS-implies_RGF}:
\[d_S(\phi(v),\phi(v')) \ge r-3 - 2 - 2L = \frac{1}{2}d_T(v,v') - 5-2L.\qedhere\]
\end{proof}

\section{Separability, misalignment and the proof of \Cref{main-Loa}}\label{s:separability} 

Now, we'll start with a family $\Hc = \{H_1,\dots,H_n\}$ of infinite, torsion-free, reducible subgroups $H_i\le \Mod(S)$ in aims of proving \Cref{main-Loa}. We'll address the necessity of the added torsion-free assumption in \Cref{E:example_torsion_free}, but the theorem has two special hypotheses on the family. The first is that the collection is $D$--separated (\Cref{def:separated}) meaning $d_S(\del H_i, \del H_j) \ge D$ for all distinct $i,j$. 
The second condition is the following:

\begin{definition}(Misalignment)
\label{def:misaligned}
A collection $\Hc= \{H_1,\dots,H_n\}$ of infinite reducible subgroups $H_i\le \Mod(S)$ is \textbf{$A$--misaligned} if their $\Cc(S)$ Gromov products satisfy
\[		
	(\del H_i \mid \del H_j)_{\del H_{k}}\geq A \quad \text{ for all distinct indices }i,j,k.
\]
\end{definition}

\begin{remark}
\label{rem:gromov_product_multicurves}
Note that the Gromov product here is by definition
\[(\del H_i \mid \del H_j)_{\del H_{k}} := \frac{1}{2}\left(d_S(\partial H_k, \partial H_i) + d_S(\partial H_j, \partial H_k) - d_S(\partial H_i, \partial H_j)\right),\]
where each distance is the diameter of the union of the two sets. Since each multicurve $\partial H_i$ is a diameter at most $1$ subset of $\Cc(S)$, this quantity lies within 2 of $(\alpha_i \mid \alpha_j)_{\alpha _k}$ for any particular choice of elements $\alpha_i,\alpha_j,\alpha_k$ of these multicurves.
Intuitively, the misalignment condition says that the reducing multicurves $\partial H_i,\partial H_j,\partial H_k$ form a tripod in the curve graph. See \Cref{fig:misalignment}. 
\end{remark}

\begin{figure}[h]

%
%

\begin{tikzpicture}[scale=.7,rotate=20]

\begin{scope}[dashed, thick, red!55!blue]
\draw (170:4) coordinate(1) -- 
node[pos=.7, yshift=-1.5em] {$\gtrsim A$} 
(0,0); 
\draw (80:3) coordinate(2) -- 
node[pos=.7, xshift=1.75em] {$\gtrsim A$} 
(0,0); 
\draw (-60:4.5) coordinate(3) -- 
node[pos=.7, xshift=1.85em] {$\gtrsim A$} 
(0,0); 
\end{scope}

\draw (1) .. controls (0,0) .. (2);
\draw (1) .. controls (0,0) .. (3);
\draw (2) .. controls (0,0) .. (3);

  \def\s{.05}
  \draw[fill] (1) circle (\s) node[below] {\small $\partial H_1$};
  \draw[fill] (2) circle (\s) node[above] {\small $\partial H_2$};
  \draw[fill] (3) circle (\s) node[below] {\small $\partial H_3$};

  \def\ss{.75}
  \begin{scope}[thick]
  \draw[red] (1) circle (\ss);
  \draw[blue] (2) circle (\ss);
  \draw[orange] (3) circle (\ss);
  \end{scope}

\end{tikzpicture}

\caption{The configuration of reducing multicurves in $\Cc$ for a misaligned triple $H_1,H_2,H_3$. Each $\partial H_i$ is roughly distance at least $A$ from a geodesic $\partial H_j$ to $\partial H_k$, hence the triple of multicurves forms a tripod. }
\label{fig:misalignment}
\end{figure}

\begin{remark}
We note that $A$--misaligned implies $(2A-4-16\delta)$--separated. To see this, for any pairwise distinct triple $i,j,k$ pick any choice of curves $\alpha_\ell\in\partial H_\ell$.  
Choose a geodesic triangle with vertices $\alpha_i,\alpha_j,\alpha_k$, and let $p_i,p_j,p_k$ be vertices of an inner triangle where $p_\ell$ is opposite $\alpha_\ell$ (in other words, $p_k$ is on the opposite edge $[\alpha_i,\alpha_j]$, etc). Then $\diam(\{\alpha_i,\alpha_j,\alpha_k\})\leq4\delta$ by hyperbolicity of $\mathcal C(S)$. The $A$--misaligned condition and \eqref{eqn:gromov-product-to-geod} give that $d(\alpha_{\ell_1},[\alpha_{\ell_2},\alpha_{\ell_3}])\geq A-2-4\delta$ for all $\{\ell_1,\ell_2,\ell_3\}=\{i,j,k\}$. 
Then 
\begin{align*}
d(\partial H_i,\partial H_j) \geq d(\alpha_i,\alpha_j)&=d(\alpha_i,p_k)+d(p_k,\alpha_j) \\
& \geq d(\alpha_i,p_i)+d(p_j,\alpha_j)-8\delta \\
& \geq 2A-4-16\delta.
\end{align*}
\end{remark}

Our proof of \Cref{main-Loa} will roughly follow the argument of Loa \cite[Theorem 1.1]{Loa}, with adaptations to handle the more general aspects of our setup, namely arbitrary reducible subgroups and a larger number of them. 
The first step is to show that each element of a reducible group $H$ moves curves in $\Cc(S)$ a definite fraction of their distance to $\partial H$. This mirrors Lemma 3.1 of Loa's argument, but because our reducible group $H$ need not be a multitwist group, we use \Cref{thm:minimal_translation_mangahas_cor} instead of a computation in the curve graph of an annulus.

\begin{lemma}
\label{lem:definite_distance}
There is a constant $K\ge 1$, depending only on $S$, with the following property:
Let $H\le \Mod(S)$ be a reducible subgroup and $ g\in H$ an infinite order element. Then every multicurve
 $\alpha$ on $S$ satisfies
\[d_S(\alpha,  g\alpha) \ge \frac{d_S(\alpha,\partial H) - 3}{K} .\]
\end{lemma}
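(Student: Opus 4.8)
The plan is to reduce to the case of a pure element and then play the minimal subsurface translation estimate of \Cref{thm:minimal_translation_mangahas_cor} against the Bounded Geodesic Image Theorem \Cref{thm:BddGeodesicImage}. Write $R = d_S(\alpha,\partial H)$ and note $\partial H$ is a nonempty multicurve by \Cref{lem:reducing_multicurve}. When $R\le 3$ the right-hand side is nonpositive and there is nothing to prove, so I would assume $R\ge 4$. Let $N = N(S)$ be the uniform power for which $g^N$ is pure, and set $h = g^N$, which is again infinite order and preserves $\partial H$. Since $d_S(\alpha, h^n\alpha)\le nN\,d_S(\alpha,g\alpha)$ for every $n$ (triangle inequality, as each $g^i$ acts on $\Cc(S)$ by isometries), it will suffice to bound $R$ above by $d_S(\alpha,h^n\alpha)$ plus a constant for a suitable fixed $n$.

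First I would record a structural fact: every domain $Y$ of $h$ satisfies $d_S(\partial Y,\partial H)\le 1$. Indeed, each component $c$ of $\partial H$ lies in $R(H)$, so $h^j c = c$ for some $j\ge 1$; were $c$ to project nontrivially to a domain $Y$ of $h$, then \Cref{thm:minimal_translation_mangahas_cor} would give $d_Y(h^{jm}c,c)\ge c\,\abs{jm}\to\infty$, contradicting $h^{jm}c = c$. Hence every component of $\partial H$ is disjoint from every domain $Y$, so $\partial H\cup\partial Y$ is a multicurve and $d_S(\partial Y,\partial H)\le 1$. With this in hand, if $\alpha$ were disjoint from all domains of $h$ then $R\le d_S(\alpha,\partial Y)+d_S(\partial Y,\partial H)\le 2$, contradicting $R\ge 4$; so $\alpha$ projects nontrivially to some domain $Y$ of $h$.

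Now I would activate the translation estimate. Fix $n = \lceil(\bgit+1)/c\rceil$ and a component $\gamma$ of $\alpha$ with $\pi_Y(\gamma)\ne\emptyset$. \Cref{thm:minimal_translation_mangahas_cor} gives $d_Y(\alpha,h^n\alpha)\ge d_Y(\gamma,h^n\gamma)\ge cn>\bgit$. Since $Y$ is a proper essential subsurface (as $h$ preserves $\partial H\ne\emptyset$), the contrapositive of \Cref{thm:BddGeodesicImage} forces any $\Cc(S)$--geodesic from $\alpha$ to $h^n\alpha$ to contain a vertex $v$ with $\pi_Y(v)=\emptyset$; such a $v$ is disjoint from $Y$, so $d_S(v,\partial H)\le d_S(v,\partial Y)+d_S(\partial Y,\partial H)\le 2$. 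As $v$ lies on a geodesic from $\alpha$ to $h^n\alpha$, we obtain $R\le d_S(\alpha,v)+d_S(v,\partial H)\le d_S(\alpha,h^n\alpha)+2$.

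Combining, $R-2\le d_S(\alpha,h^n\alpha)\le nN\,d_S(\alpha,g\alpha)$, so $d_S(\alpha,g\alpha)\ge (R-2)/(nN)\ge (R-3)/K$ with $K = nN = N\lceil(\bgit+1)/c\rceil\ge 1$ depending only on $S$. The delicate points, and where I expect the real work to lie, are the two transfers between $g$ and its pure power: establishing that the domains of $h$ stay within distance $1$ of $\partial H$ (carefully handling annular domains and the fact that $h$ only preserves $\partial H$ setwise, permuting its components), and choosing the auxiliary power $n$ large enough to clear the $\bgit$ threshold while keeping the resulting multiplicative constant uniform in $S$. The remainder is bookkeeping with the triangle inequality.
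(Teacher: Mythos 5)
Your proof is correct and follows essentially the same route as the paper's: pass to the pure power $g^N$, show every domain $Y$ of it lies within distance $1$ of $\partial H$, then combine \Cref{thm:minimal_translation_mangahas_cor} with the contrapositive of \Cref{thm:BddGeodesicImage} to force a geodesic to pass near $\partial H$, and finish with the triangle inequality over the $nN$ steps. The only (harmless) differences are cosmetic: you run BGIT along the geodesic $[\gamma, h^n\gamma]$ between matching components and extract a one-sided estimate (yielding $K = nN$), whereas the paper uses diameter-realizing components $\beta \in \alpha$, $\beta' \in g^{mN}\alpha$ to get both legs and $K = mN/2$, and you prove the fact $d_S(\partial Y, \partial H) \le 1$ directly via translation lengths rather than via the canonical reducing system of $\langle g^N \rangle$; your one-unit slack from the diameter convention in $d_S(\alpha, v) \le d_S(\alpha, h^n\alpha)$ is absorbed by the $-3$ in the statement.
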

\begin{proof}
As noted in \Cref{ssub:MCG_Reducibles}, there is a uniform power $N\ge 1$, depending only on $S$, so that $ g^N$ is pure, that is, in normal form. Since $ g$ has infinite order, $ g^N$ is also nontrivial. Let $Y$ be any domain of $ g^N$. 
By \Cref{thm:minimal_translation_mangahas_cor} there is a uniform constant $c>0$, depending only on $S$, such that
\begin{equation}
\label{eqn:translation_length_bound}
d_Y(\beta,( g^N)^m\beta)\geq mc
\end{equation}
for every $m\ge 1$ and every curve $\beta$ that projects to $Y$. In particular, $\partial Y$ must  lie in the reducing system $\partial H'$ for the infinite cyclic reducible group $H' = \langle  g^N\rangle \le H$, since $\partial Y$ is invariant under $ g^N$ and all curves cutting $\partial Y$ evidently have infinite $H'$--orbit. Since the components of $\partial H$ clearly have finite $H'$--orbit and so lie in $R(H')$, we conclude that $d_{S}(\partial Y, \partial H)\le 1$ by definition of $\partial H'$.

Now consider any multicurve $\alpha$. If $d_S(\alpha,\partial H)\le 3$ the claim is trivially true. So we suppose $d_S(\alpha,\partial Y) \ge 4$ which implies that each component of $\alpha$ intersects $\partial Y$ and so projects to $Y$. Choose $m$ so that $mc-2 > \bgit$, the constant from the Bounded Geodesic Image Theorem (\Cref{thm:BddGeodesicImage}).

Next let $\beta\in \alpha$ and $\beta'\in  g^{mN}\alpha$ realize the diameter $d_S(\alpha, g^{mN}\alpha)$. Note that $d_Y(\beta', g^{mN}\beta)\le 2$ by (\ref{eqn:multicure_bdd_projection}) since these curves are disjoint. Hence by \eqref{eqn:translation_length_bound} we get
\[ d_Y(\beta,\beta') \ge  d_Y(\beta, g^{mN}\beta) - 2 \ge mc-2 > \bgit.\]

The contrapositive of \Cref{thm:BddGeodesicImage} now implies that any geodesic $[\beta,\beta']\subset \Cc(S)$ contains a point $\gamma$ with empty projection to $Y$. 
In particular, $\gamma$ is disjoint from $\partial Y$ which is in turn disjoint from $\partial H$, so we get $d_S(\gamma,\partial H)\le 2$.
Consequently, 
\[d_S(\alpha,\partial H) \le d_S(\beta,\partial H) +1 \le d_S(\beta,\gamma) +3.\]
Similarly, since $ g^{mN}$ fixes $\partial H$ by definition,  we find that
\[ d_S(\alpha,\partial H) = d_S( g^{mN}\alpha,\partial H) \le d_S(\beta',\partial H)+1 \le d_S(\beta',\gamma)+3.\]
Combining these, and using the fact that $\gamma\in [\beta,\beta']$ we now conclude that
\[d_S(\alpha, g^{mN}\alpha) 
= d_S(\beta,\beta') 
= d_S(\beta,\gamma) + d_S(\gamma,\beta')
\ge 2(d_S(\alpha,\partial H) - 3).\]

On the other hand, $\{\alpha,  g\alpha, \dots,  g^{mN} \alpha\}$ gives path from $\alpha$ to $ g^{mN}\alpha$ with $mN$ segments of equal length $d_S(\alpha, g \alpha)$. Hence by the triangle inequality we conclude
\[2(d_S(\alpha,\partial H) - 3) \le d_S(\alpha, g^{mN}\alpha) \le mN d_S(\alpha, g\alpha),\]
which proves the lemma with constant $K = \frac{mN}{2}$, depending only on $S$.
\end{proof}

The next step is to use hyperbolicity of $\Cc(S)$ to achieve an upper bound for the Gromov product of $\alpha$ and $ g\alpha$ with respect to $\partial H$ (compare \cite[Lemma 3.2]{Loa}).

\begin{lemma}
\label{lem:loa-bound-gromov-product}
There is a constant $K'>0$ satisfying the following: Let $ g$ be an infinite order element of a reducible subgroup $H\le \Mod(S)$. Then every multicurve $\alpha$ satisfies $(\alpha\mid  g\alpha)_{\partial H}\le K'$.
\end{lemma}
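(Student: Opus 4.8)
The plan is to exploit the fact that $g$, and hence every power $g^{k}$, fixes the canonical reducing system $\partial H$ (this is a reducing multicurve for $H$ by \Cref{lem:reducing_multicurve}, and $g\in H$) and acts on $\Cc(S)$ by isometries. This has two immediate consequences. First, since $g^{-k}$ is an isometry carrying $g^{k}\alpha\cup\partial H$ onto $\alpha\cup\partial H$, we get $d_S(g^{k}\alpha,\partial H)=d_S(\alpha,\partial H)=:d$ for every $k$. Second, the Gromov products along the $g$--orbit are all equal: a direct computation gives
\[
(g^{k}\alpha \mid g^{k+1}\alpha)_{\partial H}=\tfrac12\bigl(2d-d_S(g^{k}\alpha,g^{k+1}\alpha)\bigr)=\tfrac12\bigl(2d-d_S(\alpha,g\alpha)\bigr)=(\alpha \mid g\alpha)_{\partial H}=:R,
\]
so it suffices to bound the single value $R$.

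The key input is a sharper reading of the previous lemma. Its proof does not merely give a lower bound on $d_S(\alpha,g\alpha)$; rather, for a uniform power $P=mN$ (with $m,N$ depending only on $S$) it establishes $d_S(\alpha,g^{P}\alpha)\ge 2(d_S(\alpha,\partial H)-3)$ whenever $d_S(\alpha,\partial H)\ge 4$. Combined with $d_S(g^{P}\alpha,\partial H)=d$, this yields
\[
(\alpha \mid g^{P}\alpha)_{\partial H}=d-\tfrac12 d_S(\alpha,g^{P}\alpha)\le 3;
\]
and when $d_S(\alpha,\partial H)\le 3$ the bound $(\alpha \mid g^{P}\alpha)_{\partial H}\le\min\{d_S(\alpha,\partial H),d_S(g^{P}\alpha,\partial H)\}\le 3$ holds trivially. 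Thus $(\alpha \mid g^{P}\alpha)_{\partial H}\le 3$ for every $\alpha$, with $P$ uniform.

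Finally I would chain these two observations using hyperbolicity of $\Cc(S)$. Choose representatives $a\in\alpha$ and $w\in\partial H$ and set $x_{k}=g^{k}a$; since multicurves have diameter at most $1$ in $\Cc(S)$, \Cref{rem:gromov_product_multicurves} lets me pass between the set-valued Gromov products and the point-valued ones $(x_{k}\mid x_{k+1})_{w}$ and $(x_{0}\mid x_{P})_{w}$ at the cost of a universal additive error. Iterating the $\delta$--hyperbolicity inequality~(\ref{eqn:hyperblicity}) along $x_{0},\dots,x_{P}$ gives $(x_{0}\mid x_{P})_{w}\ge\min_{0\le k<P}(x_{k}\mid x_{k+1})_{w}-(P-1)\delta$; since all terms in the minimum equal $R$ up to that universal error while the left side is at most $3$ up to the same error, this rearranges to $R\le 3+(P-1)\delta+O(1)$. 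As $P$ and $\delta$ depend only on $S$, this produces the desired uniform $K'$. The one genuinely substantive point is the factor of $2$ coming from the Bounded Geodesic Image argument inside \Cref{lem:definite_distance}: it is precisely what forces the geodesic $[\alpha,g^{P}\alpha]$ to pass near $\partial H$, and without it the naive estimate $(\alpha \mid g\alpha)_{\partial H}\le d-\tfrac{d-3}{2K}$ grows with $d$ and gives nothing. The translation between set-valued and point-valued Gromov products is routine given the diameter-$1$ bound on multicurves.
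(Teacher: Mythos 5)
Your proposal is correct, but it takes a genuinely different route from the paper's. The paper black-boxes \Cref{lem:definite_distance}: it forms the geodesic triangle with vertices $\alpha_0$, $\gamma\in\partial H$, and $g\alpha_0$, uses thin triangles together with $d_S(\gamma,g\gamma)\le 1$ to show the inner-triangle point $x\in[\gamma,\alpha_0]$ satisfies $d_S(x,gx)\le 12\delta+3$, and then applies the \emph{statement} of \Cref{lem:definite_distance} to the curve $x$ to force $x$ (hence the inner point on $[\alpha_0,g\alpha_0]$) to lie within $(12\delta+3)K+O(\delta)$ of $\partial H$, bounding the Gromov product via (\ref{eqn:gromov-product-to-geod}). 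You instead re-open the \emph{proof} of \Cref{lem:definite_distance} and extract the sharper estimate it actually establishes, namely $d_S(\alpha,g^{P}\alpha)\ge 2\left(d_S(\alpha,\partial H)-3\right)$ for the uniform power $P=mN$ (this factor of $2$ is indeed present in the final displayed chain of that proof, valid once $d_S(\alpha,\partial H)\ge 4$, the small-distance case being trivial as you say); this collapses $(\alpha\mid g^{P}\alpha)_{\partial H}\le 3$ in one line, and you transfer the bound from $g^{P}$ back to $g$ via the exact equivariance $(g^{k}\alpha\mid g^{k+1}\alpha)_{\partial H}=(\alpha\mid g\alpha)_{\partial H}$ (which holds because $g\cdot\partial H=\partial H$ as a set, so all three diameter-distances are preserved) plus $P-1$ iterations of the four-point inequality (\ref{eqn:hyperblicity}), with \Cref{rem:gromov_product_multicurves} absorbing the set-versus-point discrepancies. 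All of these steps check out, and the iteration is in the same spirit as \Cref{lem:local-to-global}. What each approach buys: the paper's argument is modular — it uses only the stated conclusion of the previous lemma and treats $g$ directly, at the price of the fiddly inner-triangle and closest-point bookkeeping of \Cref{fig:LoaStep2}; yours eliminates that geometric configuration entirely in favor of a formal computation, at the price of citing the internals of the previous proof rather than its statement — in a write-up the inequality $d_S(\alpha,g^{mN}\alpha)\ge 2\left(d_S(\alpha,\partial H)-3\right)$ should be recorded as a standalone claim or folded into the statement of \Cref{lem:definite_distance}. The resulting constants are comparable (yours roughly $7+(P-1)\delta$, the paper's roughly $4\delta+(12\delta+3)K+5$ with $K=mN/2$), and both depend only on $S$.
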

\begin{proof}
Let $\delta$ be the hyperbolicity constant of $\Cc(S)$. Fix a component $\gamma$ of $\partial H$ and any component $\alpha_0$ of $\alpha$. Since $g$ preserves $\partial H$, which is a multicurve, we have $d_S(\gamma,g\gamma)\leq 1$. 
Fix a geodesic $[\gamma,\alpha_0]$ from $\gamma$ to $\alpha_0$. 
The image $ g[\gamma,\alpha_0]$ gives a geodesic from $g \gamma$ to $ g\alpha_0$ which we denote $[g\gamma, g\alpha_0]$. We also fix a geodesic $[\alpha_0, g\alpha_0]$, giving us a geodesic triangle in $\Cc(S)$ with vertices $\alpha_0,\gamma, g\alpha_0$.

By the inner triangle formulation of hyperbolicity, there exist points $z\in [\alpha_0, g\alpha_0]$, $x\in [\gamma,\alpha_0]$, and $y\in [\gamma, g\alpha_0]$, as illustrated in \Cref{fig:LoaStep2}, whose pairwise distances are at most $4\delta$. 
Let $y'$ be the closest point projection of $gx$ onto the geodesic $[\gamma,g\alpha_0]$. 
By the thin triangles definition of hyperbolicity, since $d(\gamma,g\gamma)\leq 1$, we must have $d(gx,y')\leq {4\delta}+1$.
By definition  of inner triangle (see \Cref{S:hyperbolic}), we have
$d_S(gx,g\gamma)=d_S(x,\gamma)=d_S(y,\gamma)$. 
Since $d_S(\gamma, g\gamma)\le 1$, it follows that $\abs{d_S(gx,\gamma) - d_S(y,\gamma)}\le 1$. The bound $d_S(gx,y')\le 4\delta+1$ thus implies $\abs{d_S(y',\gamma) - d_S(y,\gamma))} \le 4\delta+2$. But since $y,y'$ both lie on a geodesic $[\gamma, g\alpha_0]$ starting at $\gamma$, this implies $d_S(y,y')\le 4\delta+2$.
The triangle inequality now implies $d(x,gx)\leq {12\delta}+3$.

	\begin{figure}[h]
	
	\begin{tikzpicture}[scale=.9]
  \begin{scope}[]
    \draw (-4,0) coordinate (alpha0);
  \draw (0,-3) coordinate (gamma);
  \draw (1,-3) coordinate (ggamma);
  \draw (4,0) coordinate (galpha0);
  \draw (alpha0) to [out=-20,in=110]  coordinate[pos=.75] (x) coordinate[pos=.9] (d1) (gamma);
  \draw (alpha0) to[out=-20, in=200] coordinate[pos=.5] (z) (galpha0);
  \draw (gamma) to[out=70, in=200] coordinate[pos=.2] (y') coordinate[pos=.25] (y) coordinate[pos=.1] (d2) 
   (galpha0);
  \draw (gamma)--(ggamma);
  \draw (ggamma) to[out=80, in=200] 
  coordinate[pos=.25] (gx)
  coordinate[pos=.1] (d3)
  (galpha0);
  \end{scope}

  \begin{scope}[thin]
  \draw (x) to[out=30,in=150] (y);
  \draw (z) to[out=-80,in=150] (y);
  \draw (x) to[out=30,in=-100] (z);
  \end{scope}

  \begin{scope}
  \def\d{.1}
  \draw (d1)++(30:\d) -- (d1); 
  \draw (d1)++(180+30:\d) -- (d1); 

  \draw (d2)++(180-30:\d) -- (d2); 
  \draw (d2)++(-30:\d) -- (d2);

  \draw (d3)++(180-20:\d) -- (d3); 
  \draw (d3)++(-20:\d) -- (d3); 
  \end{scope}

  \def\s{.05}
  \draw[fill] (alpha0) circle (\s) node[left] {$\alpha_0$};
  \draw[fill] (gamma) circle (\s) node[below] {$\gamma$};
  \draw[fill] (ggamma) circle (\s) node[below] {$g\gamma$};
  \draw[fill] (gx) circle (\s) node[right] {$ g x$};
  \draw[fill] (x) circle (\s) node[below left] {$x$};
  \draw[fill] (y) circle (\s) node[above] {$y$};
  \draw[fill] (z) circle (\s) node[above] {$z$};
  \draw[fill] (galpha0) circle (\s) node[right] {$ g\alpha_0$};
  \draw[fill] (y') circle (\s) node[left] {$y'$};

  \draw (gamma)++(0,-.5) coordinate (sgamma);
  \draw (ggamma)++(0,-.5) coordinate (sggamma);

  \draw [decorate,decoration={brace,amplitude=3pt}]
  (sggamma)--(sgamma)
  node [midway,xshift=0pt,yshift=-10pt] {$\leq 1$};

\end{tikzpicture}
	\caption{The arrangement of curves in $\Cc(S)$ for \Cref{lem:loa-bound-gromov-product}.}
		\label{fig:LoaStep2}
	\end{figure}	

Applying \Cref{lem:definite_distance} to the curve $x$, we now find that
\[
\frac{d_S(x,\partial H) -3}{K} \le d_S(x, g x) \le {12\delta}+3.
\]
Therefore $d_S(x,\gamma) \le d_S(x,\partial H) \le  ({12\delta}+3)K+ 3$. Hence by the triangle inequality $d_S(\gamma,z) \le 4\delta+ ({12\delta}+3)K+3$. Finally, by (\ref{eqn:gromov-product-to-geod}) and \Cref{rem:gromov_product_multicurves} we conclude
\[(\alpha \mid  g\alpha)_{\partial H} \le (\alpha_0 \mid g\alpha_0)_{\gamma}  +2 \le d_S(\gamma,z) + 2 \le 4\delta+({12\delta}+3)K+5.\qedhere\]
\end{proof}

With these lemmas in hand, we are now prepared for the proof of \Cref{main-Loa} from the introduction. Our goal is to show there are universal constants $D,A$ such that whenever the reducible subgroups $\{H_1, \ldots, H_n\}$ are $D$--separated and $A$--misaligned, the subgroup they generate is reducibly geometrically finite and is isomorphic to the free product $H_1\ast\dots\ast H_n$.

\begin{proof}[Proof of \Cref{main-Loa}]
Let $K'$ be the constant from \Cref{lem:loa-bound-gromov-product}. Let $A := K'+5+\delta$ and $D := 4K'+11+28\delta$. Consider any $D$--separated and $A$--misaligned collection $\Hc = \{H_1,\dots,H_n\}$ of infinite, torsion-free, reducible subgroups $H_i\le\Mod(S)$. As in \Cref{s:cayley_trees}, let $T$ be the Bass--Serre tree for the abstract free product $H = H_1\ast\dots \ast H_n$ and $\phi\co T \to \mathcal{C}(S)$ the equivariant map with respect to the action of $H$ on $\cc(S)$ induced by $\Phi\colon H\to G = \langle H_1,\dots, H_n\rangle \le \Mod(S)$. To prove the theorem, it suffices to establish the lower bound required by \Cref{lem:qi-to-CS-implies_RGF}. For this, we will use the local-to-global principle (\Cref{lem:local-to-global}). 

So, consider any type--1 vertices $v,v'\in T$ and let $v= a_0,a_2,\dots, a_r = v'$ be the sequence of type--1 vertices along the geodesic $[v,v']$ in $T$. 
For each $0\le s \le r$,  choose $\beta_s$ to be any component of the multicurve $\phi(a_s)$.
As in the proof of \Cref{thm:displacing}, each consecutive triple has the form $a_{s-1} = {\bf v}(g H_i)$, $a_s = {\bf v}(gH_j) = {\bf v}(g'H_j)$, and $a_{s+1} = {\bf v}(g'H_k)$ for some $i\ne j \ne k$ and $g,g' \in H$ with $h = g\inv g'\in H_j$.
 Thus by definition $\phi(a_{s-1}) = g\cdot \partial H_i$, $\phi(a_s) = g\cdot \partial H_j$, and $\phi(a_{s+1}) = g'\cdot \partial H_k$. In particular, there are unique components $\alpha_i\in \partial H_i$, $\alpha_j\in \partial H_j$ and $\alpha_k\in \partial H_k$ so that
\[\beta_{s-1} = g\cdot \alpha_i,\quad \beta_s = g\cdot \alpha_j,\quad\text{and}\quad \beta_{s+1} = g'\cdot \alpha_k = gh\cdot \alpha_k.\]
Therefore we see that
\[(\beta_{s-1}\mid \beta_{s+1})_{\beta_s} = (g\cdot \alpha_i \mid gh\cdot \alpha_k)_{g\cdot \alpha_j} 
= (\alpha_i \mid h\cdot \alpha_k)_{\alpha_j}\]
where $h\in H_j$.
 Then by the definition of $\delta$--hyperbolicity (\ref{eqn:hyperblicity}), \Cref{rem:gromov_product_multicurves}, and \Cref{lem:loa-bound-gromov-product},  we observe:
\begin{align*}
\min\left\{(\alpha_i \mid h\cdot \alpha_k)_{\alpha_j}, (h\cdot \alpha_k \mid h\cdot \alpha_i)_{\alpha_j} \right\} - \delta -2
&\le (\alpha_i \mid h\cdot \alpha_i)_{\alpha_j} -2\\
&\le (\alpha_i \mid h\cdot \alpha_i)_{\partial H_j} \le K'.
\end{align*}
Because $h\in H_j$ acts isometrically on $\Cc(S)$ with $h\cdot\partial H_j = \partial H_j$, we also know that
\[(h\cdot \alpha_i\mid h\cdot  \alpha_k)_{\alpha_j} +{2}
\ge (h\cdot \partial H_i \mid h\cdot \partial H_k)_{h\cdot \partial H_j} = (\partial H_i \mid \partial H_k)_{\partial H_j}.\]

If $i\ne k$, this rightmost quantity is at least $A$ by  misalignment, and if $i= k$ it is 
\[(\partial H_i \mid \partial H_i)_{\partial H_j} \ge d_S(\partial H_i, \partial H_j) - 1 \ge D-1\]
by separation. In either case, since $\min\{A,D-1\}-{2} > K'+2+\delta$, the minimum above must be achieved by $(\alpha_i \mid h\cdot \alpha_k)_{\alpha_j}$, and we conclude
\begin{equation*}
(\beta_{s-1}\mid \beta_{s+1})_{\beta_s} = (\alpha_i \mid h\cdot \alpha_k)_{\alpha_j} \le K'+2+\delta.
\end{equation*}
By $D$--separation we also have
\[d_S(\beta_{s-1},\beta_s)\ge d_S(g\cdot \partial H_i, g\cdot \partial H_j)-2\ge D-2 > 4(K'+2+\delta)+24\delta.\]
Hence we may apply \Cref{lem:local-to-global} to the sequence $\beta_0,\dots,\beta_r$ to conclude
\[d_S(\phi(v),\phi(v')) \ge d_S(\beta_0,\beta_r) \ge \frac{1}{2}\sum_{s=1}^r d_S(\beta_{s-1},\beta_s)
\ge \frac{D-2}{2}r = \frac{D-2}{4} d_T(v,v').\]
The conclusion of the theorem therefore follows from \Cref{lem:qi-to-CS-implies_RGF}.
\end{proof}

\section{Examples}\label{examples} 
In this section, we'll include examples of collections of reducible subgroups 
which satisfy some or all of the separating and misaligned conditions in our results. \Cref{ex:misaligned-separated} verifies that our results are not vacuous, while Examples \ref{ex:free_product}, \ref{E:qi_embedding_fail}, \ref{E:example_torsion_free} confirm that our assumptions are necessary for the conclusions of \Cref{main-Loa} and \Cref{main-subgroups}. 
It may be helpful for the reader to consider groups generated by a collection of mapping classes fully supported on a collection of subsurfaces, as in \Cref{remark:reducibleExamples} or the setup of Clay-Leininger-Mangahas \cite{CLM-RAAGs_in_MCG}. Our \Cref{main-subgroups} additionally allows for torsion elements.

\subsection{Examples which satisfy the hypotheses of \Cref{main-Loa}}

As $\Cc(S)$ is infinite-diameter, for each $D>0$, one can construct collections of reducible subgroups which are $D$--separated with ease. For example, consider a collection of curves $\alpha_1, \ldots, \alpha_k$ with $\min{d_S(\alpha_i, \alpha_j)} > D+2$. It follows that for any infinite $H_i \leq \stab(\alpha_i)$, the collection $\{H_1, \ldots H_k\}$ is $D$--separated. We'll upgrade this setup to produce a large collection of examples which are also arbitrarily misaligned.

\begin{proposition} \label{ex:misaligned-separated}
		Let $D'>8$ and set $D= D'-8$. Suppose $\alpha$ is a multicurve and
		$Y$ an essential subsurface with $d_S(\partial Y, \alpha) = D'$. Let $g\in\Mod(Y)$ be fully supported and for each $k\in\Z$, consider the multicurve $\alpha_k= g^k \cdot \alpha$. 
		Then there is a uniform $N$ so that for any choice of infinite, nontrivial subgroups 
		$ H_{kN}\leq \stab(\alpha_{kN})$, 
		the following collection (hence, any subcollection)
		is $D$--separated and $D$--misaligned:
		\[ \{\ldots, H_{-3N}, H_{-2N}, H_{-N}, H_0, H_{N}, H_{2N}, \ldots \} \]

\end{proposition}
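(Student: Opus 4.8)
The plan is to exploit that, since $g$ is supported on $Y$, the translates $\alpha_k=g^k\cdot\alpha$ all lie at distance exactly $D'$ from $\partial Y$ while having arbitrarily large pairwise subsurface projection to $Y$. This will force every geodesic between two of them to detour within distance $1$ of $\partial Y$, so that the $\alpha_{kN}$ fan out from $\partial Y$ like the legs of a tripod; the separation and misalignment estimates are then read off from the resulting coarse distances, with the passage from $\alpha_{kN}$ to $\partial H_{kN}$ costing only a bounded additive error.

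First I would record two preliminaries. Since each $H_{kN}\le\stab(\alpha_{kN})$ is infinite and reducible, \Cref{lem:reducing_multicurve} guarantees $\partial H_{kN}$ is a nonempty multicurve; and because $H_{kN}$ permutes the (finitely many) components of $\alpha_{kN}$, these components lie in $R(H_{kN})$, so every element of $\partial H_{kN}$ is disjoint from or equal to them. Hence $d_S(\partial H_{kN},\alpha_{kN})\le 1$, and it suffices to prove the two estimates for the multicurves $\alpha_{kN}$ and absorb this error at the end. Second, $g$ fixes $\partial Y$ because it is supported on $Y$, so $d_S(\alpha_{kN},\partial Y)=d_S(g^{kN}\alpha,\partial Y)=d_S(\alpha,\partial Y)=D'$ for every $k$.

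The technical core is a two-sided estimate on $d_S(\alpha_{iN},\alpha_{jN})$ for $i\ne j$. Using equivariance of subsurface projection under the $Y$--supported map $g$ together with \Cref{thm:minimal_translation_mangahas_cor}, I would show $d_Y(\alpha_{iN},\alpha_{jN})=d_Y(\alpha,g^{(j-i)N}\alpha)\ge c\,\lvert i-j\rvert\,N$, where $c=c(S)$ is the translation constant; here I use that $\alpha$ projects nontrivially to $Y$ since $d_S(\alpha,\partial Y)=D'>2$. Choosing $N>\bgit/c$ makes this exceed $\bgit$ whenever $i\ne j$, so the contrapositive of the Bounded Geodesic Image Theorem (\Cref{thm:BddGeodesicImage}) places a curve disjoint from $Y$, hence within distance $1$ of $\partial Y$, on every geodesic $[\alpha_{iN},\alpha_{jN}]$. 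Combined with the distance-$D'$ fact above, the forced detour gives the lower bound $d_S(\alpha_{iN},\alpha_{jN})\ge 2D'-2$, while concatenating the two length-$D'$ paths through $\partial Y$ gives the upper bound $d_S(\alpha_{iN},\alpha_{jN})\le 2D'+1$. Since $c$ and $\bgit$ depend only on $S$, the threshold $N$ is uniform, as required.

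Both conclusions then follow by pushing the additive error of $1$ through these bounds. For separation, $d_S(\partial H_{iN},\partial H_{jN})\ge(2D'-2)-2=2D'-4\ge D$. For misalignment, \Cref{rem:gromov_product_multicurves} together with the two-sided bounds yields, for distinct $i,j,k$,
\[
(\partial H_{iN}\mid\partial H_{kN})_{\partial H_{jN}}\ \ge\ \tfrac12\big((2D'-4)+(2D'-4)-(2D'+3)\big)\ =\ D'-\tfrac{11}{2}\ \ge\ D.
\]
The one genuinely delicate point, and the step I expect to require the most care, is the projection/BGIT argument of the third paragraph: it is what converts the algebraic input (the lower bound on $d_Y$) into the geometric tripod picture. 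Everything else is bookkeeping with the triangle inequality, and the comfortable gap $D=D'-8$ is exactly what leaves room for the errors incurred in replacing each $\alpha_{kN}$ by $\partial H_{kN}$.
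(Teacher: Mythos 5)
Your proposal is correct and follows essentially the same route as the paper's proof: project to $Y$ and use \Cref{thm:minimal_translation_mangahas_cor} to choose a uniform $N$ making $d_Y$ exceed the BGIT threshold, apply the converse of \Cref{thm:BddGeodesicImage} to force every geodesic $[\alpha_{iN},\alpha_{jN}]$ within distance $1$ of $\partial Y$, deduce the two-sided estimate $d_S(\alpha_{iN},\alpha_{jN})\approx 2D'$, and then absorb the additive error $d_S(\partial H_{kN},\alpha_{kN})\le 1$ in the final separation and misalignment arithmetic. Your additive constants are marginally more optimistic than the paper's (which works with a fixed component $\alpha_0$ and gets $2D'-4\le d_S(\alpha_0,g^{kN}\alpha_0)\le 2D'+2$, landing exactly on $D=D'-8$ in the misalignment computation), but the discrepancy is at the level of bookkeeping with the diameter convention for distances between multicurves, not a gap in the argument.
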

\begin{figure}[h]
	        \begin{tikzpicture}
                \def\r{2}
                \def\theta{40}
                \coordinate[label=below:{$\partial Y$}, circle, fill=gray, scale=0.2] (0) at (0,-0.5);
                \coordinate[label=above:{$\partial H_{0}$}, circle, fill=black, scale=0.2] (1) at ({\r*cos(90)}, {\r*sin(90)});

                \coordinate[label=above right:{$\partial H_{N}$}, circle, fill=black, scale=0.2] (2) at ({\r*cos(90-\theta)}, {\r*sin(90-\theta)});
                \coordinate[label=right:{$\partial H_{2N}$}, circle, fill=black, scale=0.2] (3) at ({\r*cos(90-(2*\theta))}, {\r*sin(90-(2*\theta))});
                \coordinate[label=above left:{$\partial H_{-N}$}, circle, fill=black, scale=0.2] (4) at ({\r*cos(90+\theta)}, {\r*sin(90+\theta)});
                \coordinate[label=left:{$\partial H_{-2N}$}, circle, fill=black, scale=0.2] (5) at ({\r*cos(90+(2*\theta))}, {\r*sin(90+(2*\theta))});
                \foreach \i in {1, ..., 5}{
                        \draw[gray] (0) -- (\i);
                }

                \foreach \k in {1, ..., 5}{
                        \foreach \j in {1, ..., 5}{
                                \ifthenelse{\equal{\k}{\j}}{}{\draw (\k) .. controls (0,-.35) .. (\j);
                                }
                        }
                }
                \draw [black,dotted,domain= {90+(2*\theta) +5 }: {90+(2*\theta)+15}] plot ({\r*cos(\x)}, {\r*sin(\x)});
                \draw [black,dotted,domain= {90-(2*\theta) -5 }: {90-(2*\theta)-15}] plot ({\r*cos(\x)}, {\r*sin(\x)});
        \end{tikzpicture}
	\caption{ The convex-hull of the boundaries of the $D$--separating, $D$--misaligned collection  $\{H_{kN}\}_{k\in\Z}$ in $\Cc(S)$. \label{fig: D-sep-misaligned}}
\end{figure}
\begin{proof}
Since $d_S(\alpha,\partial Y)>2$, we have that 
$\pi_Y(g^k \alpha)\neq \emptyset$ for all $k$. 
		 Fixing a component $\alpha_0\in \alpha$, choose $N$ (using  \Cref{thm:minimal_translation_mangahas_cor}) such that for all $|n|\geq N$, we have $d_Y(\alpha_0, g^{n}\alpha_0)>\bgit$. Applying the converse of \Cref{thm:BddGeodesicImage} to $\alpha_0$ and $g^{kN}\alpha_0$ 
	for all $k\in\mathbb Z$, we have that there exists a vertex $\gamma$ on a geodesic $[\alpha_0, g^{kN}\cdot\alpha_0]$ with $d_S(\gamma, \partial Y) \leq 1$. It follows that $D'+1$ is an upper bound for both $d_S(\alpha_0, \gamma), d_S(\gamma, g^{kN}\alpha_0)$ and $D'-2$ is a lower bound for both $d_S(\alpha_0, \gamma)$ and $d_S(\gamma, g^{kN}\cdot\alpha_0)$. So
	\[ 
		2D'-4 \leq d_S(\alpha_0, g^{kN}\cdot \alpha_0) \leq 2D'+2
	\]
	Since $g^{kN}\alpha_0\in R(H_k)$ for all $k$, we have  $d_S(g^{kN}\alpha_0,\partial H_{kN})\leq 1$, hence
	\begin{align*}
	2D'-6 \leq 
	d_S(\alpha_0, g^{(j-i)N}\alpha_0)-2 \leq 
	d_S(\partial H_{iN}, \partial H_{jN}).
	\end{align*}
	It follows that the collection $\{\partial H_{kN}\}_{k\in\Z}$ is $D$--separated, and a straight-forward calculation shows that it is also $D$--misaligned: For $i,j,k$ pairwise distinct, 
	\[(\partial H_{iN}\mid \partial H_{jN})_{\partial H_{kN}} \geq \frac{1}{2}\big((2D'-6)+(2D'-6) -(2D'+4)\big) = D'-8=D.\qedhere\]
\end{proof}

\subsection{The separating assumption for \Cref{main-Loa}}

First we observe that $D$--separating is necessary for \Cref{main-Loa}.
Consider curves $\alpha$, $\beta$ with $d_S(\alpha, \beta)=2$ and large enough geometric intersection number that the group generated by the respective Dehn twists $\langle T_\alpha, T_\beta\rangle$ is a free group. 
The element $T_\alpha T_\beta$ is infinite order and is not conjugate into either factor, yet its orbits in $\Cc(S)$ are bounded.
Thus the coned-off Cayley graph $\hat\Gamma(\langle T_\alpha, T_\beta\rangle, \{ \langle T_\alpha\rangle, \langle T_\beta\rangle\})$ 
does not $\Mod(S)$--equivariantly quasi-isometrically embed into $\Cc(S)$ and $\langle T_\alpha,T_\beta\rangle$ is not RGF.

\subsubsection*{Some setup} We use the following notation in the next two examples, which demonstrate that while the $D$--separating condition is necessary, it alone is insufficient for the conclusions of \Cref{main-Loa}. Fix $D\ge8$ and let $\bgit$ be the constant from \Cref{thm:BddGeodesicImage}. 
Let $ H_{\alpha}$, $H_{\beta}$ and $H_{\gamma} $ denote arbitrary infinite subgroups of the subgroup of 
$\Mod(S)$ generated by the Dehn twists about the components of $\alpha,\beta,$ and $\gamma$ respectively. Recall from \Cref{lem:multitwist} that $H_\alpha, H_\beta,$ and $H_\gamma$ are multitwist groups and are free abelian. 
For a given element $T\in H_{\alpha}$, let 
\[ 
	G \colonequals \langle H_{\alpha}, H_{\beta}, T H_{\gamma} T^{-1}\rangle \quad \textrm{ and }\quad \mathcal{H} \colonequals \{ H_{\alpha}, H_{\beta}, T H_{\gamma} T^{-1} \}.
\]
Observe that $\alpha$ (resp. $\beta$, $T\gamma$) contains $\partial H_\alpha$ (resp. $\partial H_\beta$, $\partial (T H_\gamma T^{-1})$). 
We will use  $\alpha_i,\beta_j,\gamma_k$ to denote arbitrary components of $\alpha,\beta,$ and $\gamma$, respectively, and 
$Y_i$ to denote the annulus about the component $\alpha_i$ of $\alpha$. 

\begin{example}
\label{ex:free_product}
We show that the multicurves $\alpha, \beta, \gamma$ and $T$ may be chosen so that $\mathcal{H}$ is $D$--separated, but does not generate the free product  $H_\alpha\ast H_\beta \ast T H_\gamma T^{-1}$. 
Assume $\gamma=\beta$ and $d_S(\alpha,\beta) \geq D$. 
By raising to sufficiently high powers 
and appealing to
\Cref{thm:minimal_translation_mangahas_cor}, 
 we may assume the element $T\in H_\alpha$ satisfies 
$d_{Y_i}(\beta_j,T\beta_j)\geq \bgit$ for some domain $Y_i$ of $T$, as defined in \Cref{ssub:MCG_Reducibles}. Now, since $\partial Y_i=\alpha_i$ and $T\alpha_i=\alpha_i$,
we can apply the converse of \Cref{thm:BddGeodesicImage} to the geodesic $[\beta_j,T   \beta_j]$
to see that 
\[
  d_S(\partial H_\beta, T \partial H_\beta )) \geq  d_S(\beta_j,T\beta_j) -2 \geq 
  d(\beta_j,\alpha_i)+d(\alpha_i,T\beta_j) - 4 \geq 
  2D-8\geq D.
\]
Then since 
$d_S(\partial H_\alpha, \partial H_\beta) = d_S(\alpha, \beta)\geq D$, 
the collection $\Hc$ is $D$--separated. However, 
$G=\langle H_\alpha,H_\beta\rangle$, and thus is not isomorphic to 
$ H_\alpha  \ast H_{\beta} \ast T H_{\beta} T^{-1}$ as in the conclusions of \Cref{main-Loa}. 
We note also that 
the collection is only at most $1$--misaligned since $(\beta_j\mid T\beta_j)_{\alpha_i}= 1$. See \Cref{fig:nonexample_freeproduct}.  
\end{example}

\begin{figure}[h]
\centering
%
%

\begin{tikzpicture}[scale=1.2,rotate=20]

\draw(170:3) coordinate(beta) --(0,0) coordinate (alpha) node[midway, yshift=1em] {$\geq D$} --(-65:3) coordinate(Tbeta) node[midway,xshift=2em] {$\geq D$} {};

\draw[red] (beta) .. controls (alpha) .. coordinate[pos=.5] (xi) (Tbeta);

\begin{scope}[]
\draw[] (xi)-- coordinate[pos=.5] (mid) (alpha) {};

\draw[<-] (mid)++ (-45:.1) 
.. controls (-45:.3) and (-30:.4) ..
(10:.5) node[above right] {$1$};
\end{scope}

  \def\s{.05}
  \draw[fill] (beta) circle (\s) node[above] {$\beta_j$};
  \draw[fill] (alpha) circle (\s) node[above right] {$\alpha_i$};
   \draw[fill] (Tbeta) circle (\s) node[right] {$T\beta_j$};
   \draw[fill] (xi) circle (\s) node[below left] {$\xi$};
\end{tikzpicture}

\caption{The curves $\alpha_i,\beta_j,T\beta_j$ in \Cref{ex:free_product} are designed so that there exists a $\xi\in[\beta_j,T\beta_j]$ distance $1$ from $\alpha_i$, hence $\mathcal H$ is only 1-misaligned.}
\label{fig:nonexample_freeproduct}
\end{figure}

We note that this does not rule out that
that $G$ is RGF relative to $\mathcal H$---in fact, the group is RGF relative to a different collection, $\{H_\alpha, H_\beta\}$.
However, this conclusion does not hold in general, as demonstrated in the next example.

\begin{example}
\label{E:qi_embedding_fail}
Here we demonstrate that $\alpha, \beta, \gamma$ and $T$ may be chosen so that $\Hc$ is $D$--separated, but $\hat\Gamma(G,\Hc)$ fails to admit a $\Mod(S)$--equivariant quasi-isometric embedding into $\Cc(S)$.

Now assume $\beta$ and $\gamma$ 
intersect but do not fill $S$, and  $\alpha$ satisfies $d_S(\alpha, \beta), d_S(\alpha, \gamma)\geq D$. 
Then there are components $\alpha_i,\beta_j,\gamma_k$ such that $\pi_{Y_i}(\gamma_k)\neq\emptyset$ and 
$\pi_{Y_i}(\beta_j)\neq \emptyset$.
Again using \Cref{thm:minimal_translation_mangahas_cor}, we may assume $T\in H_\alpha$ is such 
that $d_{Y_i}(\beta_j,T\beta_j) \geq \bgit +  d_{Y_i}(\beta_j, \gamma_k)$, thus
\begin{align*}
d_{Y_i}(\beta_j,T\gamma_k)&\geq  
d_{Y_i}(\beta_j,T\beta_j)-d_{Y_i}(T\beta_j,T\gamma_k) 
\\ &= d_{Y_i}(\beta_j,T\beta_j)-d_{Y_i}(\beta_j,\gamma_k) \geq \bgit.
\end{align*} 
We now apply the converse of \Cref{thm:BddGeodesicImage} to conclude
\begin{align*}
d_S(\partial H_\beta, T\partial H_\gamma ) \geq d_S(\beta_j,T\gamma_k) -2 \geq d_S(\beta_j,\alpha_i)+d_S(\alpha_i,\gamma_k)-4 \geq 2D-8\geq D.
\end{align*}
Thus, the collection $\mathcal H$ is $D$--separated. 

Now, for any nontrivial $h\in H_\beta$ and $g\in H_\gamma$, the element $hg\in G$ is an infinite order reducible element. However, $hg=hT^{-1}g' T$, where  $g'\in T H_\gamma T^{-1}$ and $T\in H_\alpha$, is not conjugate into any group in $\mathcal H$ and thus  acts loxodromically on $\hat\Gamma(G,\mathcal H)$. 
Hence 
$\hat\Gamma (G, \mathcal{H})$ admits no $\Mod(S)$--equivariant quasi-isometric embedding into the curve graph, and $G$ is not RGF relative to $\mathcal H$.
Observe again (see \Cref{fig:nonexample_qiembedding}) that the collection is only 1-misaligned.
\end{example}

\begin{figure}[h]
\centering
%
%
%

\begin{tikzpicture}[scale=1.2,rotate=10]

\draw(160:3) coordinate(gamma) --(0,0) coordinate (alpha) node[midway, yshift=1em] {$\geq D$} --(-65:3) coordinate(Tbeta) node[midway,xshift=2em] {$\geq D$} {};
\draw (170:3) coordinate (beta) -- (alpha) -- (-75:3) coordinate (Tgamma) ;

\draw[red] (beta) .. controls (alpha) .. coordinate[pos=.5] (xi) (Tgamma);

\begin{scope}[]
\draw[] (xi)-- coordinate[pos=.5] (mid) (alpha) {};

\draw[<-] (mid)++ (-45:.1) 
.. controls (-45:.3) and (-30:.4) ..
(10:.5) node[above right] {$1$};

\draw (gamma)++(-.2,.1) coordinate (sgamma);
\draw (beta)++(-.2,0) coordinate (sbeta);

\draw (Tgamma)++(0,-.2) coordinate (sTgamma);
\draw (Tbeta)++(.1,-.2) coordinate (sTbeta);

\draw [decorate,decoration={brace,amplitude=3pt}]
(sbeta)--(sgamma)
node [midway,xshift=-10pt,yshift=2pt] {$2$};

\draw [decorate,decoration={brace,amplitude=3pt}]
(sTbeta)--(sTgamma)
node [midway,xshift=3pt,yshift=-10pt] {$2$};
\end{scope}

  \def\s{.05}
  \draw[fill] (beta) circle (\s) node[below] {$\beta_j$};
  \draw[fill] (alpha) circle (\s) node[above right] {$\alpha_i$};
   \draw[fill] (Tbeta) circle (\s) node[right] {$T\beta_j$};
  \draw[fill] (gamma) circle (\s) node[above] {$\gamma_k$};
   \draw[fill] (Tgamma) circle (\s) node[left] {$T\gamma_k$};
   \draw[fill] (xi) circle (\s) node[below left] {$\xi$};
\end{tikzpicture}

\caption{The curves $\alpha_i,\beta_j,T\gamma_k$ in \Cref{E:qi_embedding_fail} are designed so that there exists a $\xi\in[\beta_j,T\gamma_k]$ distance $1$ from $\alpha_i$, hence $\mathcal H$ is only 1-misaligned.}
\label{fig:nonexample_qiembedding}
\end{figure}

	\subsection{The torsion-free assumption for \Cref{main-Loa}}
	Here we'll demonstrate the necessity of the the torsion-free assumption in \Cref{main-Loa} by producing a $D$--separated collection $\{H_1, H_2\}$ with $H_1$, $H_2$ containing torsion, but the group $\langle H_1, H_2 \rangle$ does not split as the free product of the factors.

	\begin{example}
	\label{E:example_torsion_free}
	Assume there exists an element $\sigma \in \Mod(S)$ of order $k$ which fixes a multicurve $\alpha$, and 
	that $f$ is a pseudo-Anosov which commutes with $\sigma$. For instance, $\sigma$ could represent an order--$2$ homemorphism fixing a multicurve $\alpha$ such that the quotient $S/\langle \sigma\rangle$ is a $2$--orbifold which admits a pseudo-Anosov element $\bar f$. The homeomorphism $\bar f$ lifts to a pseudo-Anosov $f$ on $S$ which commutes with $\sigma$.

	Now, let $T=T_\alpha$ denote the composition of Dehn twists along each component of $\alpha$ and observe that $\sigma$ commutes with $T$ .  Let $H_1=\langle \sigma, T\rangle$ and $H_2=f^\ell H_1 f^{-\ell}$ for $\ell$ large. Note that $H_1$ and $H_2$ are both reducible and isomorphic to $\mathbb Z\times \mathbb Z_k$. 
 See that $\partial H_1=\alpha$ and $\partial H_2=f^\ell\alpha$, so for any $D>0$ we can choose $\ell$ large enough so that $\{H_1,H_2\}$ is $D$--separated. However, $\sigma\in H_2$ as well since it commutes with $f$, hence $\langle H_1, H_2 \rangle \cong (\Z \ast \Z)\times \Z_k$ $\not\cong 
	H_1 \ast H_2$.

 We show that $G=\<H_1, H_2\>$ is nonetheless RGF relative to $\{H_1,H_2\}$.	It is straightforward that $G$ satisfies the bounded coset penetration property with respect to the subgroups $H_1,H_2$, hence $G$ is hyperbolic relative to $\{H_1,H_2\}$.
	
	Let $H_1'=\langle T \rangle$ and $H_2'=\langle f^\ell T f^{-\ell}\rangle$.
	First, see that 
	\[
	G =\langle f^\ell,T,\sigma \mid [f^\ell,\sigma], [T,\sigma], \sigma^k\rangle\cong (\mathbb Z\ast \mathbb Z)\times \mathbb Z_k
	\] contains $G' =\langle H_1',H_2'\rangle\cong\mathbb Z\ast\mathbb Z$ as a finite-index torsion-free subgroup, hence $G$ and $G'$ are quasi-isometric. By \Cref{main-Loa}, $G'$ is RGF relative to $\{H_1',H_2'\}$. 
	We may compose the induced quasi-isometry between $\hat\Gamma (G,\{H_1,H_2\})$ and $\hat\Gamma (G',\{H_1',H_2'\})$, with the quasi-isometric embedding  $\hat\Gamma (G',\{H'_1,H'_2\})\to \Cc(S)$ to yield the $\Mod(S)$-equivariant quasi-isometric embedding in the definition of RGF. 
	\end{example}

	\bibliographystyle{alpha}
	\bibliography{GF-RAAGs}

\end{document}